\numberwithin{equation}{subsection}
\numberwithin{theorem}{subsection}
\newtheorem{thm}{Theorem}[subsection]
\newtheorem{prop}[thm]{Proposition}
\newtheorem{cor}[thm]{Corollary}
\newtheorem{lem}[thm]{Lemma}
\newtheorem{defn}[thm]{Definition}
\newcommand{\cat}[1]{\mathfrak{#1}}
\newcommand{\Set}{\mathbf{Set}}
\DeclareMathOperator{\ob}{Ob}
\DeclareMathOperator{\Del}{Del}
\DeclareMathOperator{\card}{card}
\DeclareMathOperator{\isol}{isol}
\journalname{Journal of Algebraic Combinatorics}
\begin{document}

\title{Incidence hypergraphs: The categorical inconsistency of set-systems and a characterization of quiver exponentials}
\titlerunning{Incidence hypergraphs and a characterization of quiver exponentials} 

\author{Will Grilliette \and Lucas J. Rusnak}

\institute{ Texas State University \\
 \email{\{w\_g28, Lucas.Rusnak\}@txstate.edu}} 
              
\date{Received: date / Accepted: date}

\maketitle

\begin{abstract}
This paper considers the difficulty in the set-system approach to generalizing graph theory.  These difficulties arise categorically as the category of set-system hypergraphs is shown not to be cartesian closed and lacks enough projective objects, unlike the category of directed multigraphs (i.e.\ quivers). The category of incidence hypergraphs is introduced as a ``graph-like'' remedy for the set-system issues so that hypergraphs may be studied by their locally graphic behavior via homomorphisms that allow an edge of the domain to be mapped into a subset of an edge in the codomain. Moreover, it is shown that the category of quivers embeds into the category of incidence hypergraphs via a logical functor that is the inverse image of an essential geometric morphism between the topoi. Consequently, the quiver exponential is shown to be simply represented using incidence hypergraph homomorphisms.
\end{abstract}

\keywords{Incidence hypergraph \and Quiver \and Set system \and Exponential \and Essential geometric functor
\subclass{05C65 \and 05C76 \and 68R10 \and 18A40 \and 18B25}}

\setcounter{tocdepth}{2}
\tableofcontents

\section{Introduction \& Background}

We examine the combinatorial and categorical differences of three well studied categories of graph-like objects and discuss the deficiencies in the set-system approach to hypergraph theory, before a fourth category is introduced as a natural hypergraphic generalization of graph theory. The categories studied are: (1) the category of quivers $\mathfrak{Q}$ (directed graphs), (2)  the category of set-system hypergraphs $\mathfrak{H}$, (3) the category of multigraphs $\mathfrak{M}$, and (4) the category of incidence hypergraphs $\mathfrak{R}$. The categories $\mathfrak{Q}$, $\mathfrak{M}$, and $\mathfrak{H}$ are well studied in theoretical computer science \cite{graphgrammars3}, \cite{graphtransformation}, \cite{GandH} and categorical graph theory \cite{brown2008}, \cite{bumby1986}, \cite{dorfler1980}, \cite{grilliette3}, \cite{Grill1}, \cite{computationalcategory}. However, the difficulties of the set-system approach to hypergraphs are resolved by $\mathfrak{R}$, and many graph theoretic results have already been generalized to hypergraphs via ``oriented hypergraphs'' in \cite{Chen2017}, \cite{OHHar}, \cite{Reff1}, \cite{AH1}, \cite{OHMTT}, \cite{OH1}, \cite{Shi1}. The nature of quiver and graph exponentials is of particular importance regarding Hedetniemi's conjecture, where in \cite{El_Zahar_1985} it was shown that the multiplicativity of $K$ is equivalent to either $G$ or $K^G$ is $K$-colorable, for every graph $G$.
The main results provide structure theorems which illustrate; (1) that the difficulty in the set-system hypergraphic approach to generalizations of graph theory are categorical in nature; (2) the idiosyncrasies of set-systems are remedied in the category of incidence structures; (3) incidence structures are a faithful generalization of quivers via a logical functor; and (4) a characterization of the edges of quiver exponentials as morphisms under the logical inclusion into the category of incidence structures.

Moreover, the classical concepts of the incidence matrix and the bipartite representation graph are shown to be related to the adjoints arising via Kan extensions of the natural logical functor between two functor categories. Section \ref{classiccat} recalls the representation of $\mathfrak{Q}$ as both a presheaf topos as well as a comma category before extending the work from \cite{dorfler1980} which examines a multi-edge generalization of the canonical set-system hypergraph from \cite{Berge2}.  The categories $\mathfrak{H}$ and $\mathfrak{M}$ are shown nearly to be topoi, sharing numerous properties with $\cat{Q}$, but both $\cat{H}$ and $\cat{M}$ fail to be cartesian closed.  Moreover, $\mathfrak{H}$ does not have enough projective objects.

Section \ref{catR} introduces the category of incidence structures $\mathfrak{R}$, which is a presheaf topos whose comma category representation is naturally related to $\mathfrak{Q}$. The Kan extensions of the natural functors from $\mathfrak{Q}$ and $\mathfrak{R}$ have intrinsic combinatorial meaning producing: complete digraphs, bouquets of loops, disjoint generators, bipartite representations, and incidence matrices. Comparing $\cat{H}$ and $\cat{R}$, it is shown that the ``natural'' functor from $\mathfrak{H}$ to $\mathfrak{R}$ that simply inserts the implied incidence between the vertex and edge is shown to be neither continuous or cocontinuous. Moreover, the incidence-forgetful operation in the reverse direction is not even functorial. Thus, the set-system approach to hypergraph theory does not categorically extend to $\mathfrak{R}$ in any meaningful way, and a more natural generalization of ``hypergraph theory'' is to pass from $\cat{Q}$ to $\cat{R}$. Finally, a natural $\cat{Q}$ to $\cat{R}$ functor is shown to be a faithful logical functor that is part of an essential, atomic, geometric morphism. The left adjoint of the logical functor is the bipartite equivalent digraph, while the right adjoint provides a characterization of quiver exponentials as incidence morphisms.

Specifically, we develop or discuss each of the functors in the diagram in Figure \ref{figMainDiag} below. Arrows with two barbs represent a functor that has both a left and right adjoints, arrows with a left (resp. right) barb are a left (resp. right) adjoints, dotted arrows have neither, while a wavy arrow is non-functorial.
\begin{figure}[H]
\centering
\begin{equation*}
\xymatrix{
&	& 	&	& \Set \ar@/^1pc/@{-_>}[dd]|-{\overrightarrow{E}{^{\star}}} \ar@/_1pc/@{-^>}[dd]|-{\overrightarrow{E}{^{\diamond}}} \ar@/^1pc/@{-_>}[ddrrrr]|-{{E}{^{\star}}}  \ar@/_3pc/@{-_>}[ddll]|-{\check{E}{^{\star}}} \ar@/_.75pc/@{-^>}[ddll]|-{\check{E}{^{\diamond}}}	&	&	&	& 	&	&\\
&	&	&	&	&	&	&	& 	&	&\\
\Set  \ar@/^1pc/@{-_>}[rr]|-{{I}^{\star}}  \ar@/_1pc/@{-^>}[rr]|-{{I}^{\diamond}}&	&	\cat{R} \ar[ll]|-{{I}}  \ar@/^1.75pc/[uurr]|-{\check{E}} \ar@/_1.75pc/[ddrr]|-{\check{V}} \ar@{<-}[rr]|-(0.5){\Upsilon} \ar@/_1pc/@{-^>}[rr]|-(0.5){\Upsilon^{\diamond}}   \ar@/^1pc/@{-_>}[rr]|-(0.5){\Upsilon^{\star}}	&	& \cat{Q}\ar@/_1pc/@{-^>}[rr]|-(0.5){U} \ar[uu]|-{\overrightarrow{E}}\ar[dd]|-{\overrightarrow{V}} 	&	&	\cat{M}\ar@/_1pc/ @{-^>}[rr]|-{N} \ar@/_1pc/@{-_>}[ll]|-(0.5){\overrightarrow{D}}	&	&	\cat{H}\ar@/_1pc/@{.>}[rr]|-{\mathcal{I}}\ar@/_2pc/ @{-^>}[uullll]|-{E} \ar@/^2pc/ [ddllll]|-(0.5){V} \ar@/_1pc/@{-_>}[ll]|-(0.5){\Del}	&	&\cat{R} \ar@/_1pc/@{~>}[ll]|-(0.5){\mathscr{F}}	\\
&	&	&	&	&	&	&	&	 &	&\\
&	&	&	& \Set \ar@/^1pc/@{-_>}[uu]|-{\overrightarrow{V}{^{\star}}} \ar@/_1pc/@{-^>}[uu]|-{\overrightarrow{V}{^{\diamond}}} \ar@/_1pc/@{-_>}[uurrrr]|-{{V^{\star}}} \ar@/_3pc/@{-^>}[uurrrr]|-{{V^{\diamond}}} \ar@/^3pc/@{-_>}[lluu]|-{\check{V}{^{\star}}} \ar@/^.75pc/@{-^>}[lluu]|-{\check{V}{^{\diamond}}}	&	&	&	& &	&\\
}
\end{equation*}%
\caption{Functorial diagram for $\mathfrak{Q}$, $\mathfrak{M}$, $\mathfrak{H}
$, \& $\mathfrak{R}$}
\label{figMainDiag}
\end{figure}
Particular attention is paid to the asymmetry of the edge functor of $\cat{H}$, the similarities between $\cat{Q}$ and $\cat{R}$, and the unresolvable comparison between $\cat{H}$ and $\cat{R}$.

\subsection{Comma Category Framework}
This subsection provides general conditions for when the canonical projections of a comma category admit adjoint functors.  Specifically, the following construction will be used to show that the vertex functor for set-system hypergraphs will admit both a left and a right adjoint, as seen in Figure \ref{figMainDiag}.

\begin{defn}[Adjoints to $P$ and $Q$]
Let $\xymatrix{\cat{A}\ar[r]^{F} & \cat{C} & \cat{B}\ar[l]_{G}}$ be functors.

\begin{enumerate}
\item If $\mathfrak{B}$ has a terminal object $\mathbb{1}_{\mathfrak{B}}$
and $G$ is continuous, then $\mathbb{1}_{\mathfrak{C}}:=G\left(\mathbb{1}_{%
\mathfrak{B}}\right)$ is a terminal object in $\mathfrak{C}$. For $C\in\ob(%
\mathfrak{C})$, let $\mathbf{1}_{C,\mathfrak{C}}\in\mathfrak{C}\left(C,%
\mathbb{1}_{\mathfrak{C}}\right)$ be the unique morphism in $\mathfrak{C}$
from $C$ to $\mathbb{1}_{\mathfrak{C}}$. Define $P^{\star}(A):=\left(A,%
\mathbf{1}_{F(A),\mathfrak{C}},\mathbb{1}_{\mathfrak{B}}\right)$ for $A\in\ob%
(\mathfrak{A})$.

\item If $F$ has a right adjoint functor $F^{\star}$, let $\theta_{C}\in%
\mathfrak{C}\left(FF^{\star}(C),C\right)$ be the counit morphism for $C\in\ob%
(\mathfrak{C})$. Define $Q^{\star}(B):=\left(F^{\star}G(B),\theta_{G(B)},B%
\right)$ for $B\in\ob(\mathfrak{B})$.

\item If $G$ has a left adjoint functor $G^{\diamond}$, let $\eta_{C}\in%
\mathfrak{C}\left(C,GG^{\diamond}(C)\right)$ be the unit morphism for $C\in%
\ob(\mathfrak{C})$. Define $P^{\diamond}(A):=\left(A,\eta_{F(A)},G^{%
\diamond}F(A)\right)$ for $A\in\ob(\mathfrak{A})$.

\item If $\mathfrak{A}$ has an initial object $\mathbb{0}_{\mathfrak{A}}$
and $F$ is cocontinuous, then $\mathbb{0}_{\mathfrak{C}}:=F\left(\mathbb{0}_{%
\mathfrak{A}}\right)$ is an initial object in $\mathfrak{C}$. For $C\in\ob(%
\mathfrak{C})$, let $\mathbf{0}_{C,\mathfrak{C}}\in\mathfrak{C}\left(\mathbb{%
0}_{\mathfrak{C}},C\right)$ be the unique morphism in $\mathfrak{C}$ from $%
\mathbb{0}_{\mathfrak{C}}$ to $C$. Define $Q^{\diamond}(B):=\left(\mathbb{0}%
_{\mathfrak{A}},\mathbf{0}_{G(B),\mathfrak{C}},B\right)$ for $B\in\ob(%
\mathfrak{B})$.
\end{enumerate}
\end{defn}

\begin{prop}[Adjoint characterizations for $P$ and $Q$]
\label{adjoints1} Let $\xymatrix{\cat{A}\ar[r]^{F} & \cat{C} &
\cat{B}\ar[l]_{G}}$ be functors.

\begin{enumerate}
\item Assume that $\mathfrak{B}$ has a terminal object and that $G$ is
continuous. If $\xymatrix{P(A',f',B')\ar[r]^(0.7){\phi} & A}\in\mathfrak{A}$, there is a unique homomorphism $\xymatrix{(A',f',B')\ar[r]^(0.6){\hat{\phi}} & P^{\star}(A)}\in(F\downarrow
G)$ such that $P\left(\hat{\phi}\right)=\phi$.

\item Assume that $F$ has a right adjoint functor. If $\xymatrix{Q(A',f',B')\ar[r]^(0.7){\varphi} & B}\in\mathfrak{B}$, there is a
unique homomorphism $\xymatrix{(A',f',B')\ar[r]^(0.6){\hat{\varphi}} & Q^{\star}(B)}%
\in(F\downarrow G)$ such that $Q\left(\hat{\varphi}\right)=\varphi$.

\item Assume that $G$ has a left adjoint functor. If $\xymatrix{A\ar[r]^(0.35){\psi} & P(A',f',B')}\in\mathfrak{A}$, there is a
unique homomorphism $\xymatrix{P^{\diamond}(A)\ar[r]^(0.45){\hat{\psi}} & (A',f',B')}%
\in(F\downarrow G)$ such that $P\left(\hat{\psi}\right)=\psi$.

\item Assume that $\mathfrak{A}$ has an initial object and that $F$ is
cocontinuous. If $\xymatrix{B\ar[r]^(0.35){\chi} & Q(A',f',B')}\in\mathfrak{B%
}$, there is a unique homomorphism $\xymatrix{Q^{\diamond}(B)\ar[r]^(0.45){\hat{\chi}} & (A',f',B')}%
\in(F\downarrow G)$ such that $Q\left(\hat{\chi}\right)=\chi$.
\end{enumerate}
\end{prop}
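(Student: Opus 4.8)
The plan is to verify each of the four universal properties directly from the definition of morphisms in the comma category $(F\downarrow G)$, recalling that a morphism $(A',f',B')\to(A'',f'',B'')$ is a pair $(a,b)$ with $a\in\mathfrak{A}(A',A'')$ and $b\in\mathfrak{B}(B',B'')$ satisfying the coherence square $G(b)\circ f'=f''\circ F(a)$. In each part the stipulated equation $P(\hat{\phi})=\phi$ (respectively $Q(\hat{\varphi})=\varphi$, $P(\hat{\psi})=\psi$, $Q(\hat{\chi})=\chi$) pins down one of the two components of the sought morphism, so the work reduces to producing the remaining component and checking that the coherence square commutes. Since parts (1) and (4) are formally dual, as are parts (2) and (3), I would prove (1) and (2) in full and obtain (4) and (3) by dualizing, that is, reversing all arrows and exchanging $\mathfrak{A}\leftrightarrow\mathfrak{B}$, $F\leftrightarrow G$, terminal$\leftrightarrow$initial, and right adjoint$\leftrightarrow$left adjoint.

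For part (1), given $\phi\colon A'\to A$, the requirement $P(\hat{\phi})=\phi$ forces the $\mathfrak{A}$-component of $\hat{\phi}$ to be $\phi$ itself; and since $P^{\star}(A)$ carries $\mathbb{1}_{\mathfrak{B}}$ as its $\mathfrak{B}$-component, terminality of $\mathbb{1}_{\mathfrak{B}}$ forces the $\mathfrak{B}$-component to be the unique morphism $\mathbf{1}_{B',\mathfrak{B}}\colon B'\to\mathbb{1}_{\mathfrak{B}}$. This yields uniqueness immediately; for existence it remains to check the coherence square $G\left(\mathbf{1}_{B',\mathfrak{B}}\right)\circ f'=\mathbf{1}_{F(A),\mathfrak{C}}\circ F(\phi)$. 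But both composites are morphisms $F(A')\to G\left(\mathbb{1}_{\mathfrak{B}}\right)=\mathbb{1}_{\mathfrak{C}}$, and continuity of $G$ guarantees that $\mathbb{1}_{\mathfrak{C}}$ is terminal in $\mathfrak{C}$, so the two composites coincide automatically. Thus $\hat{\phi}=\left(\phi,\mathbf{1}_{B',\mathfrak{B}}\right)$ is the desired morphism, which is exactly the universal property exhibiting $P\dashv P^{\star}$.

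For part (2), given $\varphi\colon B'\to B$, the requirement $Q(\hat{\varphi})=\varphi$ forces the $\mathfrak{B}$-component to be $\varphi$, so the remaining task is to find the unique $\mathfrak{A}$-component $a\colon A'\to F^{\star}G(B)$ making the square $\theta_{G(B)}\circ F(a)=G(\varphi)\circ f'$ commute. Here I would invoke the adjunction $F\dashv F^{\star}$: the assignment $a\mapsto\theta_{G(B)}\circ F(a)$ is precisely the natural bijection $\mathfrak{A}\left(A',F^{\star}G(B)\right)\cong\mathfrak{C}\left(F(A'),G(B)\right)$, so the morphism $G(\varphi)\circ f'\colon F(A')\to G(B)$ has a unique transpose $a$, and this $a$ is the only choice making the square commute. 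Hence $\hat{\varphi}=(a,\varphi)$ is unique with $Q(\hat{\varphi})=\varphi$, exhibiting $Q\dashv Q^{\star}$.

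These arguments are almost entirely formal, so the only verification demanding real attention is confirming that each candidate pair genuinely is a morphism of $(F\downarrow G)$, i.e.\ that its coherence square commutes. In parts (1) and (4) this is free from the universal property of the terminal, respectively initial, object of $\mathfrak{C}$, which is exactly where continuity of $G$, respectively cocontinuity of $F$, is consumed; in parts (2) and (3) the square is literally the transpose equation defining the adjunct, so it holds by the universal property of the counit $\theta$, respectively the unit $\eta$. Taken together, the four statements assemble into the adjoint chains $P^{\diamond}\dashv P\dashv P^{\star}$ and $Q^{\diamond}\dashv Q\dashv Q^{\star}$.
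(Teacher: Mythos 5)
Your proof is correct and follows essentially the same route as the paper's: part (1) by taking $\hat{\phi}=\left(\phi,\mathbf{1}_{B',\mathfrak{B}}\right)$, part (2) by transposing $G(\varphi)\circ f'$ across the adjunction $F\dashv F^{\star}$ to get the unique $\mathfrak{A}$-component, and parts (3) and (4) by duality. You merely spell out details the paper leaves implicit (the coherence-square check via terminality of $G\left(\mathbb{1}_{\mathfrak{B}}\right)$, and the uniqueness arguments), which is fine.
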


\begin{proof}
\begin{enumerate}

\item Let $\hat{\phi}:=\left(\phi,\mathbf{1}_{B',\cat{B}}\right)$, where $\mathbf{1}_{B',\cat{B}}$ is the unique map from $B'$ to $\mathbb{1}_{\cat{B}}$.

\item  By the universal property of $F^{\star}$, there is a unique $\xymatrix{A'\ar[r]^(0.4){\zeta} & F^{\star}G(B)}\in\cat{A}$ such that $\theta_{G(B)}\circ F(\zeta)=G(\varphi)\circ f'$.  Let $\hat{\varphi}:=\left(\zeta,\varphi\right)$.
\end{enumerate}
The proof of part (3) (resp.\ part (4)) is dual to part (2) (resp.\ part (1)).
\qed \end{proof}

\section{Classical Categories of Graphs}\label{classiccat}

\subsection{Category of Quivers $\cat{Q}$}

\subsubsection{Functor Category Representation}
Let $\cat{E}$ be the finite category drawn below.
\[\xymatrix{
1\ar@/^/[r]^{s}\ar@/_/[r]_{t}	&	0\\
}\]

Defining $\cat{Q}:=\Set^{\cat{E}}$, let $\xymatrix{\Set & \cat{Q}\ar[l]_(0.4){\overrightarrow{V}}\ar[r]^(0.4){\overrightarrow{E}} & \Set}$ be the evaluation functors at $0$ and $1$, respectively.  An object $Q$ of $\cat{Q}$ consists of two sets, $\overrightarrow{V}(Q)$ and $\overrightarrow{E}(Q)$, and a pair of functions $\sigma_{Q},\tau_{Q}:\overrightarrow{E}(Q)\to\overrightarrow{V}(Q)$.  This object is precisely a ``directed graph,'' ``oriented graph,'' or ``quiver'' as described in \cite{bumby1986}, \cite{joyofcats}, \cite{raeburn}, \cite{schiffler}.  Constructed as a functor category into $\Set$, a category of presheaves, $\cat{Q}$ inherits a deep and rich internal structure from its parent category:  completeness and cocompleteness \cite[Corollary I.2.15.4]{borceux}, a subobject classifier \cite[Lemma A1.6.6]{elephant1}, exponential objects \cite[Proposition A1.5.5]{elephant1}, a finite set of projective generators \cite[Example I.4.5.17.b]{borceux}, injective partial morphism representers \cite[Proposition A2.4.7]{elephant1}, regularity \cite[Corollary III.5.9.2]{borceux}, to name a few.  Moreover, \cite{grilliette3} demonstrated that each object of $\cat{Q}$ admits a projective cover and an injective envelope, sharpening the covering by the projective generators and the embedding into the partial morphism representer, respectively.

Letting $\cat{1}$ be the discrete category of a single object, note that the evaluation functors $\overrightarrow{V},\overrightarrow{E}:\Set^{\cat{E}}\to\Set^{\cat{1}}$ correspond to functors from $\cat{1}$ to $\cat{E}$, mapping the one object of $\cat{1}$ to either $0$ or $1$ in $\cat{E}$.  Thus, both $\overrightarrow{V}$ and $\overrightarrow{E}$ admit adjoint functors via Kan extensions \cite[Theorem I.3.7.2]{borceux}.  Explicitly, $\overrightarrow{V}$ admits a right adjoint $\overrightarrow{V}^{\star}:\Set\to\cat{Q}$ and a left adjoint $\overrightarrow{V}^{\diamond}:\Set\to\cat{Q}$ with the following action on objects:
\begin{itemize}
\item $\overrightarrow{V}^{\star}(X)=\left(X,X\times X,\pi_{1},\pi_{2}\right)$, where $\pi_{1},\pi_{2}:X\times X\to X$ are the canonical projections;
\item $\overrightarrow{V}^{\diamond}(X)=\left(X,\emptyset,\mathbf{0}_{X},\mathbf{0}_{X}\right)$, where $\mathbf{0}_{X}:\emptyset\to X$ is the empty function.
\end{itemize}
Likewise, $\overrightarrow{E}$ admits a right adjoint $\overrightarrow{E}^{\star}:\mathbf{Set}\to\mathfrak{Q}$ and a left adjoint $\overrightarrow{E}^{\diamond}:\mathbf{Set}\to\mathfrak{Q}$ with the following actions on objects:
\begin{itemize}
\item $\overrightarrow{E}^{\star}(X)=\left(\{1\},X,\mathbf{1}_{X},\mathbf{1}_{X}\right)$, where $\mathbf{1}_{X}:X\to\{1\}$ is the constant function;
\item $\overrightarrow{E}^{\diamond}(X)=\left(\{0,1\}\times X,X,\varpi_{1},\varpi_{2}\right)$, where $\varpi_{1},\varpi_{2}:X\to\{0,1\}\times X$ are the canonical inclusions.
\end{itemize}
As can be seen in simple examples, the adjoints of $\overrightarrow{V}$ and $\overrightarrow{E}$ encode the following canonical examples: the (directed) isolated set of vertices, the complete directed multigraph, the disjoint set of directed paths of length 1, and the directed bouquet of loops at a single vertex. This is collected in Table \ref{lib1table} below.
\begin{table}[H]
\centering
\renewcommand{\arraystretch}{1}
\begin{tabular}{|l|c|l|}
\hline
Functor & $\mathrm{Dom}\rightarrow \mathrm{Codom}$ & Note \\
\hline
$\overrightarrow{E}$ & $\cat{Q} \rightarrow \mathbf{Set}$ & The set of directed edges. \\ 
$\overrightarrow{E}^{\star }$ & $\mathbf{Set}\rightarrow \cat{Q}$ & Bouquet of directed loops. \\ 
$\overrightarrow{E}^{\diamond }$ & $\mathbf{Set}\rightarrow \cat{Q}$ & Disjoint copies of $1$-paths. \\
$\overrightarrow{V}$ & $\cat{Q}\rightarrow \mathbf{Set}$ & The set of vertices. \\ 
$\overrightarrow{V}^{\star }$ & $\mathbf{Set}\rightarrow \cat{Q}$ & Complete digraph. \\ 
$\overrightarrow{V}^{\diamond }$ & $\mathbf{Set}\rightarrow \cat{Q}$ & Isolated set of vertices. \\ \hline
\end{tabular}
\caption{A library of functors for $\mathfrak{Q}$}
\label{lib1table}
\end{table}
Lastly, one can improve the generation fact.  Indeed, \cite[Theorem I.2.15.6]{borceux} states that every object in $\cat{Q}$ can be realized as a quotient of a coproduct of the following two objects:  the isolated vertex $\overrightarrow{V}^{\diamond}\left(\{1\}\right)$ and the directed path of length 1, $\overrightarrow{E}^{\diamond}\left(\{1\}\right)$.  However, any generation set for $\cat{Q}$ must have two objects, meaning this generation set is minimal.

\begin{thm}[Generation of $\cat{Q}$]\label{generation-Q}
Any set of generators for $\cat{Q}$ must have at least two non-isomorphic objects.
\end{thm}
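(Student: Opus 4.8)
The plan is to argue by contradiction: suppose some generating set $\mathcal{G}$ for $\cat{Q}$ meets only a single isomorphism class, say every member is isomorphic to a fixed quiver $G$. Recall that, as in the cited \cite[Theorem I.2.15.6]{borceux}, ``generation'' means every object of $\cat{Q}$ is a quotient of a coproduct of members of $\mathcal{G}$; since all members are isomorphic to $G$, every object would then be a quotient of a coproduct of copies of $G$. The strategy is to isolate a single invariant---the presence or absence of edges---that no one $G$ can control on both sides, and then to contradict reachability of one of the two representables $\overrightarrow{V}^{\diamond}(\{1\})$ (the isolated vertex) or $\overrightarrow{E}^{\diamond}(\{1\})$ (the $1$-path).

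First I would record two structural facts available because $\cat{Q}=\Set^{\cat{E}}$ is a presheaf topos: coproducts are computed componentwise, so $\overrightarrow{E}\left(\coprod_{i}G\right)=\coprod_{i}\overrightarrow{E}(G)$ and similarly for $\overrightarrow{V}$; and epimorphisms, hence quotients, are componentwise surjections, so a quotient map is carried by both $\overrightarrow{V}$ and $\overrightarrow{E}$ to a surjection of sets. In particular an edgeless quiver can only be a quotient of an edgeless quiver, and the only quotient of the initial (empty) quiver is itself.

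Then I would split on whether $G$ has an edge. If $\overrightarrow{E}(G)=\emptyset$, then by componentwise coproducts every coproduct of copies of $G$ is edgeless, and by componentwise surjectivity of epis every quotient of an edgeless quiver is edgeless; but $\overrightarrow{E}^{\diamond}(\{1\})$ has exactly one edge, so it cannot be so realized. If instead $\overrightarrow{E}(G)\neq\emptyset$, then any nonempty coproduct of copies of $G$ has nonempty edge set, so it admits no morphism whatsoever to the edgeless quiver $\overrightarrow{V}^{\diamond}(\{1\})$ (there is no function from a nonempty set to $\emptyset$), while the empty coproduct is the initial quiver whose only quotient is itself; either way $\overrightarrow{V}^{\diamond}(\{1\})$ is not a quotient of a coproduct of copies of $G$. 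Both cases are contradictions.

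Since no single isomorphism class suffices, any generating set must contain at least two non-isomorphic objects. I expect the only delicate bookkeeping to be the degenerate empty-coproduct/initial-object case in the edge-bearing branch; the componentwise behavior of colimits and of epimorphisms in $\Set^{\cat{E}}$ is standard. If one prefers to phrase generation as a separating family rather than via quotients of coproducts, the same edges-versus-no-edges dichotomy applies by testing two parallel pairs instead: maps out of $\overrightarrow{V}^{\diamond}(\{1\})$ that distinguish vertices force $G$ to admit a morphism to $\overrightarrow{V}^{\diamond}(\{1\})$ and hence to be edgeless, while maps out of $\overrightarrow{E}^{\diamond}(\{1\})$ distinguishing two parallel edges force $G$ to carry an edge, again an impossibility.
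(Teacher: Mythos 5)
Your proof is correct, but it takes a genuinely different route from the paper's. You invoke the quotient-of-coproducts characterization of generation and argue by contradiction: if every member of the generating set is isomorphic to a single quiver $G$, then $G$ edgeless makes $\overrightarrow{E}^{\diamond}(\{1\})$ unreachable, while $G$ edge-bearing makes $\overrightarrow{V}^{\diamond}(\{1\})$ unreachable, using that coproducts and epimorphisms in $\Set^{\cat{E}}$ are computed componentwise. The paper instead works directly with the separating-family definition: it fixes the two distinct maps $f,g:\{0\}\to\{0,1\}$ and applies $\overrightarrow{V}^{\diamond}$ and $\overrightarrow{E}^{\star}$ to get two parallel pairs; any object separating $\overrightarrow{V}^{\diamond}(f)\neq\overrightarrow{V}^{\diamond}(g)$ must admit a morphism into $\overrightarrow{V}^{\diamond}(\{0\})$ and hence be edgeless, while any object separating $\overrightarrow{E}^{\star}(f)\neq\overrightarrow{E}^{\star}(g)$ must have an edge because those two maps agree on vertices; so any generating family contains two non-isomorphic objects. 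Both arguments pivot on the same edge/no-edge dichotomy, but they buy different things: the paper's version is positive rather than by contradiction, exhibits concretely what every generating family must contain, and uses nothing beyond the separating definition itself; yours requires the equivalence of the two notions of generation (which does hold here, since $\cat{Q}$ is locally small with coproducts, and is worth stating explicitly since the paper's proof uses the separating definition) together with the pointwise description of colimits and epis in presheaf categories, plus the empty-coproduct bookkeeping you flag. One marginal point: your contradiction hypothesis presumes a representative $G$ exists, so the degenerate case of an empty generating set should be dispatched separately---it fails by the same observation that the only quotient of the initial quiver is itself. Your closing remark about testing parallel pairs out of $\overrightarrow{V}^{\diamond}(\{1\})$ and $\overrightarrow{E}^{\diamond}(\{1\})$ is, in essence, a sketch of the paper's actual proof.
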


\begin{proof}
Let $\mathscr{J}\subseteq\ob(\cat{Q})$ be a family of generators for $\cat{Q}$.  Define $f,g:\{0\}\to\{0,1\}$ by $f(0):=0$ and $g(0):=1$.  As $\overrightarrow{V}^{\diamond}(f)\neq\overrightarrow{V}^{\diamond}(g)$, there are $Q_{1}\in\mathscr{J}$ and $\xymatrix{Q_{1}\ar[r]^(0.4){\varphi_{1}} & \overrightarrow{V}^{\diamond}(\{0\})}\in\cat{Q}$ such that $\overrightarrow{V}^{\diamond}(f)\circ\varphi_{1}\neq\overrightarrow{V}^{\diamond}(g)\circ\varphi_{1}$.  Note that $\overrightarrow{E}\left(\varphi_{1}\right):\overrightarrow{E}\left(Q_{1}\right)\to\overrightarrow{E}\overrightarrow{V}^{\diamond}(\{0\})=\emptyset$.  Thus, $\overrightarrow{E}\left(Q_{1}\right)=\emptyset$.

On the other hand, as $\overrightarrow{E}^{\star}(f)\neq\overrightarrow{E}^{\star}(g)$, there are $Q_{2}\in\mathscr{J}$ and $\xymatrix{Q_{2}\ar[r]^(0.4){\varphi_{2}} & \overrightarrow{E}^{\star}(\{0\})}\in\cat{Q}$ such that $\overrightarrow{E}^{\star}(f)\circ\varphi_{2}\neq\overrightarrow{E}^{\star}(g)\circ\varphi_{2}$.  Note that $\overrightarrow{V}\overrightarrow{E}^{\star}(f)=\overrightarrow{V}\overrightarrow{E}^{\star}(g)=id_{\{1\}}$, so
\[
f\circ\overrightarrow{E}\left(\varphi_{2}\right)
=\overrightarrow{E}\left(\overrightarrow{E}^{\star}(f)\circ\varphi_{2}\right)
\neq\overrightarrow{E}\left(\overrightarrow{E}^{\star}(g)\circ\varphi_{2}\right)
=g\circ E\left(\varphi_{2}\right).
\]
Thus, there is $e\in\overrightarrow{E}\left(Q_{2}\right)$ such that $\left(f\circ\overrightarrow{E}\left(\varphi_{2}\right)\right)(e)\neq\left(g\circ\overrightarrow{E}\left(\varphi_{2}\right)\right)(e)$.  Therefore, $\card\left(\overrightarrow{E}\left(Q_{2}\right)\right)\geq 1>0=\card\left(\overrightarrow{E}\left(Q_{1}\right)\right)$, showing that $Q_{1}\not\cong_{\cat{Q}}Q_{2}$.
\qed \end{proof}

\subsubsection{Comma Category Representation}

Alternatively, $\cat{Q}$ can be constructed via a comma category.  Recall the diagonal functor for the category $\mathbf{Set}$.

\begin{defn}[{Diagonal functor, \protect\cite[p.\ 62]{maclane}}]
The \emph{diagonal functor} $\Delta:\mathbf{Set}\to\mathbf{Set}\times\mathbf{%
Set}$ is defined by $\Delta(X):=(X,X)$, $\Delta(\phi):=(\phi,\phi)$.
\end{defn}

From \cite[p.\ 87]{maclane}, $\Delta$ has a right adjoint functor $\Delta^{\star}:\mathbf{Set}\times\mathbf{Set}\to\mathbf{Set}$ determined by the categorical product, the cartesian product. Composing these two functors gives $\Delta^{\star}\Delta:\mathbf{Set}\to\mathbf{Set}$ with the action $\Delta^{\star}\Delta(X)=X\times X$, $\Delta^{\star}\Delta(\phi)(x,y)=(\phi(x),\phi(y))$.  Thus, $\Delta^{\star}\Delta$ is the $2^{\text{nd}}$-power functor from \cite[Example 3.20]{joyofcats}, and the conflictingly named ``diagonal functor'' from \cite[Definition 7.4.1]{graphgrammars3}.

As in \cite[Definition 7.4.1]{graphgrammars3}, an object $Q$ of $\left(id_{\mathbf{Set}}\downarrow \Delta^{\star}\Delta\right)$ consists of two sets, $\overrightarrow{V}(Q)$ and $\overrightarrow{E}(Q)$, and a function $\overrightarrow{\epsilon}_{Q}:\overrightarrow{E}(Q)\to\overrightarrow{V}(Q)\times\overrightarrow{V}(Q)$. This object is precisely a ``directed graph'' as described in \cite[p.\ 31]{bondy-murty}. Moreover, the notion of isomorphism in this comma category matches \cite[Exercise 1.5.3]{bondy-murty} exactly.

The proof that $\left(id_{\mathbf{Set}}\downarrow \Delta^{\star}\Delta\right)$ is isomorphic to $\Set^{\cat{E}}$ follows from the universal property of the product in $\Set$ in the diagram below.
\[\xymatrix{
&	&	\overrightarrow{E}(Q)\ar[dll]_{\sigma_{Q}}\ar[drr]^{\tau_{Q}}\ar@{..>}[d]^{\exists!\overrightarrow{\epsilon}_{Q}}\\
\overrightarrow{V}(Q)	&	&	\overrightarrow{V}(Q)\times\overrightarrow{V}(Q)\ar[ll]^{\pi_{1}}\ar[rr]_{\pi_{2}}	&	&	\overrightarrow{V}(Q)\\
}\]

\subsection{Category of Set-System Hypergraphs $\cat{H}$}

\subsubsection{Construction}

To build the category of set-system hypergraphs, recall the power-set functor for the category $\mathbf{Set}$.

\begin{defn}[{Covariant power-set functor, \protect\cite[p.\ 13]{maclane}}]
The \emph{(covariant) power-set functor} $\mathcal{P}:\mathbf{Set}\to\mathbf{Set}$ is defined in the following way:
\begin{itemize}
\item $\mathcal{P}(X)$ is the power set of $X$;
\item $\mathcal{P}(\phi)(A):=\{\phi(x):x\in A\}$, the image of $A$ under $\phi$.
\end{itemize}
\end{defn}

Let $\mathfrak{H}:=\left(id_{\mathbf{Set}}\downarrow\mathcal{P}\right)$ with domain functor $E:\mathfrak{H}\to\mathbf{Set}$ and codomain functor $V:\mathfrak{H}\to\mathbf{Set}$. An object $G$ of $\mathfrak{H}$ consists of two sets, $V(G)$ and $E(G)$, and a function $\epsilon_{G}:E(G)\to\mathcal{P}V(G)$. The category $\mathfrak{H}$ contains the category $\mathbf{H}$ of hypergraphs defined in \cite[p.\ 186]{dorfler1980} as a full subcategory, but $\mathfrak{H}$ allows for empty edges as defined in \cite[\S 1.7]{duchet1995} without any alteration to the existing objects or maps. Thus, $\mathfrak{H}$ can be considered a natural extension of $\mathbf{H}$.

Note that $\mathbf{Set}$ is cocomplete and that $id_{\mathbf{Set}}$ is its own left adjoint. Invocation of Proposition \ref{adjoints1} creates adjoint functors for $V$. Explicitly, $V$ admits a right adjoint $V^{\star}:\mathbf{Set}\to\mathfrak{H}$ and a left adjoint $V^{\diamond}:\mathbf{Set}\to\mathfrak{H}$ with the following actions on objects:
\begin{itemize}
\item $V^{\star}(X)=\left(\mathcal{P}(X),id_{\mathcal{P}(X)},X\right)$;
\item $V^{\diamond}(X)=\left(\emptyset,\mathbf{0}_{\mathcal{P}(X),\mathbf{Set}},X\right)$.
\end{itemize}
As can be seen in simple examples, the adjoints of $V$ encode the following canonical examples:  the (undirected) isolated set of vertices and the complete set-system hypergraph.

Unfortunately, as $\mathcal{P}$ is not continuous, Proposition \ref{adjoints1} does not apply to $E$. However, $E$ does admit a right adjoint in the following way.

\begin{defn}[Right adjoint to $E$]
Given a set $X$, define the set-system hypergraph $E^{\star}(X):=\left(\{0,1\}\times X,\epsilon_{E^{\star}(X)},\{1\}\right)$, where
\[
\epsilon_{E^{\star}(X)}(n,x):=\left\{\begin{array}{rl}
\emptyset, & n=0,\\ 
\\
\{1\}, & n=1.\\ 
\end{array}
\right. 
\]
Define $\zeta_{X}:EE^{\star}(X)\to X$ by $\zeta_{X}(n,x):=x$.
\end{defn}

\begin{prop}[Characterization of $E^{\star}$]\label{rightadjoint-E}
If $\xymatrix{E(G)\ar[r]^(0.6){\xi} & X}\in\mathbf{Set}$, there is a unique $%
\xymatrix{G\ar[r]^(0.4){\hat{\xi}} & E^{\star}(X)}\in\mathfrak{H}$ such that 
$\zeta_{X}\circ E\left(\hat{\xi}\right)=\xi$.
\end{prop}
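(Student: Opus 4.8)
The plan is to verify directly that $E^{\star}$ together with $\zeta$ satisfies the universal property of a right adjoint to $E$, constructing $\hat{\xi}$ componentwise and showing each component is forced. Recall that a morphism $\hat{\xi}:G\to E^{\star}(X)$ in $\mathfrak{H}$ is a pair consisting of $E(\hat{\xi}):E(G)\to\{0,1\}\times X$ and $V(\hat{\xi}):V(G)\to\{1\}$ making the comma square
\[
\epsilon_{E^{\star}(X)}\circ E(\hat{\xi})=\mathcal{P}\!\left(V(\hat{\xi})\right)\circ\epsilon_{G}
\]
commute. First I would observe that $V(\hat{\xi})$ is forced: since $\{1\}=V(E^{\star}(X))$ is terminal in $\mathbf{Set}$, there is exactly one candidate, namely the constant function $V(G)\to\{1\}$.

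For the edge component, I would read off both coordinates of $E(\hat{\xi})(e)=(n_{e},x_{e})$ from the two constraints. The counit equation $\zeta_{X}\circ E(\hat{\xi})=\xi$ together with $\zeta_{X}(n,x)=x$ forces $x_{e}=\xi(e)$, so the second coordinate must be $\xi$ itself. The first coordinate is then pinned down by the comma square: computing the right-hand side, $\mathcal{P}(V(\hat{\xi}))$ sends $\epsilon_{G}(e)$ to $\emptyset$ when $\epsilon_{G}(e)=\emptyset$ and to $\{1\}$ otherwise, while the left-hand side equals $\epsilon_{E^{\star}(X)}(n_{e},x_{e})$, which is $\emptyset$ exactly when $n_{e}=0$. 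Hence the square commutes if and only if $n_{e}=0$ for empty edges and $n_{e}=1$ for non-empty edges. I would therefore define $E(\hat{\xi})(e):=(n_{e},\xi(e))$ with $n_{e}:=0$ when $\epsilon_{G}(e)=\emptyset$ and $n_{e}:=1$ when $\epsilon_{G}(e)\neq\emptyset$, and take $V(\hat{\xi})$ to be the constant map.

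With these definitions, existence follows by confirming the two displayed identities hold by construction, and uniqueness follows because the argument above shows every admissible $\hat{\xi}$ must have exactly these two components: the $V$-component by terminality, the second edge-coordinate by the counit equation, and the first edge-coordinate by the comma square. The step I expect to be the main obstacle---really the only subtle point---is the commuting-square verification, since it hinges on how $\mathcal{P}$ collapses every non-empty subset of $V(G)$ to the singleton $\{1\}$ and the empty subset to $\emptyset$. This dichotomy is precisely what the doubled edge set $\{0,1\}\times X$ of $E^{\star}(X)$ is designed to record: it supplies, for each $x\in X$, both an empty target edge $(0,x)$ and a non-empty target edge $(1,x)$, so that the emptiness of each edge of $G$ can be matched without placing any constraint on the underlying function $\xi$.
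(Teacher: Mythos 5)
Your proposal is correct and follows essentially the same route as the paper: you construct exactly the same morphism $\hat{\xi}=\left(\alpha,\mathbf{1}_{V(G)}\right)$, with $\alpha(e)=\left(0,\xi(e)\right)$ for empty edges and $\left(1,\xi(e)\right)$ otherwise. In fact you spell out the commuting-square verification and the forcing argument for uniqueness that the paper's terse proof leaves implicit, so your write-up is, if anything, more complete.
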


\begin{proof}
Define $\alpha:E(G)\to EE^{\star}(X)$ by
\[
\alpha(e):=\left\{\begin{array}{cc}
\left(0,\xi(e)\right),	&	\epsilon_{G}(e)=\emptyset,\\
\left(1,\xi(e)\right),	&	\epsilon_{G}(e)\neq\emptyset.
\end{array}\right.
\]
Let $\hat{\xi}:=\left(\alpha,\mathbf{1}_{V(G)}\right)$, where $\mathbf{1}_{V(G)}$ is the constant map from $V(G)$ to $\{1\}$.
\qed \end{proof}

Sadly, as shown Lemma \ref{topos-fail}, $E$ is not continuous and cannot admit a left adjoint functor. These functors are collected for reference in Table \ref{lib2table}.

\begin{table}[H]
\centering
\renewcommand{\arraystretch}{1}
\begin{tabular}{|l|c|l|}
\hline
Functor & $\mathrm{Dom}\rightarrow \mathrm{Codom}$ & Note \\
\hline
$E$ & $\cat{H}\rightarrow \mathbf{Set}$ & The set of hyperedges. \\ 
$E^{\star }$ & $\mathbf{Set}\rightarrow \cat{H}$ & Bouquet of undirected 1-edges and loose 0-edges. \\
$V$ & $\cat{H}\rightarrow \mathbf{Set}$ & The set of vertices. \\ 
$V^{\star }$ & $\mathbf{Set}\rightarrow \cat{H}$ & (Simplicial) Complete hypergraph. \\ 
$V^{\diamond }$ & $\mathbf{Set}\rightarrow \cat{H}$ & Isolated set of vertices. \\ \hline
\end{tabular}
\caption{A library of functors for $\mathfrak{H}$}
\label{lib2table}
\end{table}

\subsubsection{Topos-like Properties}

As $\cat{H}=\left(id_{\Set}\downarrow\mathcal{P}\right)$ differs from $\cat{Q}\cong\left(id_{\Set}\downarrow\Delta^{\star}\Delta\right)$ only in the second coordinate, the two share a substantial amount of structure.  Firstly, a monomorphism (resp.\ epimorphism) is guaranteed to be regular and is identified as a pair of one-to-one (resp.\ onto) functions, analogous to \cite[Fact 2.15]{graphtransformation} for $\cat{Q}$.  The proof mirrors case for both $\cat{Q}$ and $\Set$, so it will be omitted.

\begin{prop}[Monomorphisms, $\cat{H}$]\label{monos-h}
For $\xymatrix{G\ar[r]^{\phi} & H}\in\mathfrak{H}$, the following are equivalent:
\begin{enumerate}
\item $\phi$ is a regular monomorphism;
\item $\phi$ is an monomorphism;
\item both $V(\phi)$ and $E(\phi)$ are one-to-one.
\end{enumerate}
\end{prop}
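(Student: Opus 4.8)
The plan is to verify the cycle $(1)\Rightarrow(2)\Rightarrow(3)\Rightarrow(1)$, leaning on the fact that $\cat{H}=\left(id_{\Set}\downarrow\mathcal{P}\right)$ differs from $\cat{Q}\cong\left(id_{\Set}\downarrow\Delta^{\star}\Delta\right)$ only in its second coordinate, together with the elementary observation that $\mathcal{P}(b)$ is injective whenever $b$ is (distinct subsets have distinct direct images under an injection). The implication $(1)\Rightarrow(2)$ is immediate, since an equalizer is always a monomorphism; the work is in $(2)\Rightarrow(3)$ and $(3)\Rightarrow(1)$.

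For $(2)\Rightarrow(3)$ I would treat the two coordinates separately. For the vertex coordinate I would invoke the adjunction $V^{\diamond}\dashv V$: as a right adjoint $V$ preserves limits, hence monomorphisms, so $\phi$ monic forces $V(\phi)$ one-to-one. (Equivalently, one probes $\phi$ with the isolated vertex $V^{\diamond}(\{*\})$, whose maps into $G$ name the vertices of $G$.) The edge coordinate is the crux and the main obstacle, precisely because $E$ carries only the right adjoint $E^{\star}$ and is not continuous, so no ``preserves monos'' shortcut is available. Here I would argue by an explicit probe: given $e_{1},e_{2}\in E(G)$ with $E(\phi)(e_{1})=E(\phi)(e_{2})=:e$, the commuting square of $\phi$ yields $\mathcal{P}(V(\phi))\left(\epsilon_{G}(e_{1})\right)=\epsilon_{H}(e)=\mathcal{P}(V(\phi))\left(\epsilon_{G}(e_{2})\right)$. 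I would then form the single-edge hypergraph $K$ with $E(K)=\{*\}$ and vertex set the fiber product $P=\left\{(x,y)\in\epsilon_{G}(e_{1})\times\epsilon_{G}(e_{2}):V(\phi)(x)=V(\phi)(y)\right\}$, set $\epsilon_{K}(*)=P$, and define $\psi_{1},\psi_{2}:K\to G$ sending $*$ to $e_{1},e_{2}$ and acting on vertices by the two projections of $P$. Surjectivity of each projection (which holds since $V(\phi)$ restricts to a surjection of each $\epsilon_{G}(e_{i})$ onto $\epsilon_{H}(e)$) makes each $\psi_{i}$ a legitimate morphism, while the defining relation of $P$ gives $\phi\psi_{1}=\phi\psi_{2}$; monicity then forces $\psi_{1}=\psi_{2}$, hence $e_{1}=e_{2}$.

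For $(3)\Rightarrow(1)$ I would exhibit $\phi$ as a concrete equalizer rather than appeal to a componentwise formula (unavailable, as $\mathcal{P}$ is not continuous). Taking cokernel pairs in $\Set$ coordinatewise, set $V(W):=V(H)\sqcup_{V(G)}V(H)$ and $E(W):=E(H)\sqcup_{E(G)}E(H)$, with coprojections $j_{1},j_{2}$ and $k_{1},k_{2}$. The commuting square of $\phi$ guarantees that $j_{1}\circ\epsilon_{H}$ and $j_{2}\circ\epsilon_{H}$ agree on the glued edges, so they assemble into a well-defined incidence $\epsilon_{W}:E(W)\to\mathcal{P}(V(W))$ making $u:=(k_{1},j_{1})$ and $w:=(k_{2},j_{2})$ parallel morphisms $H\rightrightarrows W$ with $u\phi=w\phi$. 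I would then check the universal property by hand: any $\chi:K\to H$ with $u\chi=w\chi$ has $E(\chi)$ factoring through the image of $E(\phi)$ and $V(\chi)$ through the image of $V(\phi)$, so condition (3) produces unique factorizations $E(\chi)=E(\phi)\circ c$ and $V(\chi)=V(\phi)\circ d$; that $(c,d)$ is itself a morphism of $\cat{H}$ is exactly where I cancel the injection $\mathcal{P}(V(\phi))$ from the commuting square of $\chi$. This presents $\phi$ as the equalizer of $u,w$, hence a regular monomorphism, closing the cycle.

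The genuine obstacle is the edge half of $(2)\Rightarrow(3)$: unlike in $\cat{Q}$, where $\overrightarrow{E}$ has a left adjoint and thus preserves monos, the edge functor $E$ here is not continuous and admits no left adjoint, so injectivity of $E(\phi)$ cannot be read off from preservation of monomorphisms and must be forced by the fiber-product probe above. The remaining steps are bookkeeping in $\Set$ that mirror the arguments for $\Set$ and $\cat{Q}$, with $\mathcal{P}$ playing the role that $\Delta^{\star}\Delta$ plays there, both being injection-preserving.
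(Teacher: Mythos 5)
Your proof is correct. Since the paper omits this proof entirely (remarking only that it mirrors the cases of $\cat{Q}$ and $\Set$), your write-up supplies precisely the argument being gestured at: $(1)\Rightarrow(2)$ is formal, $(2)\Rightarrow(3)$ is by probing, and $(3)\Rightarrow(1)$ exhibits $\phi$ as the equalizer of its cokernel pair, with $\epsilon_{W}$ induced by the pushout property of $E(W)$ and with injectivity of $\mathcal{P}V(\phi)$ cancelling correctly in the factorization step. Your one substantive departure from a literal mirroring is the edge half of $(2)\Rightarrow(3)$, and you are right that this is exactly where the quiver proof does not transfer verbatim: $E$ admits no left adjoint (Lemma \ref{topos-fail}), so there is no analogue of the representing object $\overrightarrow{P}_{1}$, and your fiber-product probe $K$ is a correct fix --- the key verification being that $\pi_{1}(P)=\epsilon_{G}(e_{1})$ exactly, not merely a containment, since the comma-category morphism condition demands equality of endpoint sets. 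For comparison, a slightly leaner argument stays closer to the paper's own toolkit: settle the vertex coordinate first, so that $V(\phi)$ and hence $\mathcal{P}V(\phi)$ are injective; then $E(\phi)(e_{1})=E(\phi)(e_{2})$ forces $\epsilon_{G}(e_{1})=\epsilon_{G}(e_{2})=:S$, and one probes with the single-edge hypergraph $G_{S}:=\left(\{1\},\epsilon_{G_{S}},S\right)$, $\epsilon_{G_{S}}(1):=S$ (the generator the paper later uses for $\cat{H}$), via the two morphisms sending the edge to $e_{1}$, respectively $e_{2}$, and including $S$ into $V(G)$ identically; monicity then gives $e_{1}=e_{2}$. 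Both routes are valid; yours has the minor advantage that the edge case does not depend on the vertex case having been proved first.
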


\begin{prop}[Epimorphisms, $\cat{H}$]\label{epis-h}
For $\xymatrix{G\ar[r]^{\phi} & H}\in\mathfrak{H}$, the following are equivalent:
\begin{enumerate}
\item $\phi$ is a regular epimorphism;
\item $\phi$ is an epimorphism;
\item both $V(\phi)$ and $E(\phi)$ are onto.
\end{enumerate}
\end{prop}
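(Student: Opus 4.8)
The three conditions will be linked in the cycle $(1)\Rightarrow(2)\Rightarrow(3)\Rightarrow(1)$. The implication $(1)\Rightarrow(2)$ is immediate, since a regular epimorphism is an epimorphism in any category. For $(2)\Rightarrow(3)$ I would exploit the adjunctions already in hand: $V\dashv V^{\star}$ and, by Proposition \ref{rightadjoint-E}, $E\dashv E^{\star}$. Thus both $V$ and $E$ are left adjoints, hence preserve all colimits and in particular epimorphisms (an epimorphism is detected by the coincidence of the two coprojections of its cokernel pair, and this coincidence is preserved by any colimit-preserving functor). Consequently, if $\phi$ is epic in $\cat{H}$, then $V(\phi)$ and $E(\phi)$ are epic in $\Set$, i.e.\ onto. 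This is the clean, fully symmetric half of the argument, and it is where the epimorphism case behaves more agreeably than the monomorphism case: there $E$ lacks a left adjoint, whereas here both projections possess right adjoints.

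The substance of the proof is $(3)\Rightarrow(1)$, which I would establish by exhibiting $\phi$ explicitly as a coequalizer. Form the object $K\in\cat{H}$ with
\[
E(K):=E(G)\times_{E(H)}E(G),\qquad V(K):=V(G)\times_{V(H)}V(G),
\]
and incidence $\epsilon_{K}(e,e'):=\bigl(\epsilon_{G}(e)\times\epsilon_{G}(e')\bigr)\cap V(K)$, together with the two projections $p,q:K\to G$ acting componentwise. The key point, and the step I expect to be most delicate, is verifying that $p$ and $q$ are genuine morphisms of $\cat{H}$: one must check that the first-coordinate image of $\epsilon_{K}(e,e')$ is exactly $\epsilon_{G}(e)$ (and likewise the second-coordinate image is $\epsilon_G(e')$). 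This is precisely where the hypothesis that $V(\phi)$ is onto is used: for $(e,e')\in E(K)$ one has $\{V(\phi)(v):v\in\epsilon_{G}(e)\}=\{V(\phi)(v):v\in\epsilon_{G}(e')\}$, both being $\epsilon_{H}(E(\phi)(e))$ by the morphism law for $\phi$, so every $v\in\epsilon_{G}(e)$ is matched by some $v'\in\epsilon_{G}(e')$ with $(v,v')\in V(K)$, and symmetrically. Hence the projections of $\epsilon_{K}(e,e')$ recover $\epsilon_G(e)$ and $\epsilon_G(e')$, making $p,q$ well-defined arrows satisfying $\phi\circ p=\phi\circ q$.

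It then remains to verify the universal property. Given $\psi:G\to G'$ with $\psi\circ p=\psi\circ q$, the component maps $E(\psi)$ and $V(\psi)$ are constant on the fibres of the surjections $E(\phi)$ and $V(\phi)$; since an onto map in $\Set$ is the coequalizer of its kernel pair, there are unique $g:E(H)\to E(G')$ and $h:V(H)\to V(G')$ with $g\circ E(\phi)=E(\psi)$ and $h\circ V(\phi)=V(\psi)$. I would then check that $\bar{\psi}:=(g,h)$ respects incidence by precomposing the target identity $\epsilon_{G'}\circ g=\mathcal{P}(h)\circ\epsilon_{H}$ with the epimorphism $E(\phi)$ and invoking the morphism laws for $\psi$ and $\phi$; right-cancelling $E(\phi)$ then yields the identity. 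This makes $\bar{\psi}$ the unique factorization of $\psi$ through $\phi$, so $\phi$ is the coequalizer of $p,q$ and hence a regular epimorphism, closing the cycle. The one genuine obstacle to keep in view is that, because $\mathcal{P}$ is not continuous, the kernel pair is \emph{not} computed coordinatewise in $\cat{H}$; the construction above deliberately sidesteps this by building $K$ and its incidence by hand rather than appealing to limits in $\cat{H}$.
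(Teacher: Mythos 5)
Your proof is correct, and it is in substance the argument the paper leaves unwritten: the paper omits the proof of this proposition, remarking only that it mirrors the cases of $\cat{Q}$ and $\Set$, and your cycle $(1)\Rightarrow(2)\Rightarrow(3)\Rightarrow(1)$ --- componentwise detection of epimorphisms via the adjunctions $V\dashv V^{\star}$ and $E\dashv E^{\star}$ (Proposition \ref{rightadjoint-E}), followed by exhibiting $\phi$ as the coequalizer of an explicitly constructed pair --- is precisely that mirror, with the added care that the pair $(K,p,q)$ is built by hand rather than extracted from the (non-coordinatewise) limits of $\cat{H}$.

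Two corrections to your commentary, neither of which damages the argument. First, the verification that $p$ and $q$ are morphisms of $\cat{H}$ does \emph{not} use surjectivity of $V(\phi)$, contrary to what you assert: the matching of each $v\in\epsilon_{G}(e)$ with some $v'\in\epsilon_{G}(e')$ follows solely from the morphism law $\mathcal{P}V(\phi)\circ\epsilon_{G}=\epsilon_{H}\circ E(\phi)$ together with $E(\phi)(e)=E(\phi)(e')$; indeed such a pair $(p,q)$ with $\phi\circ p=\phi\circ q$ exists for an arbitrary $\phi$, as a kernel pair must. The hypotheses of (3) enter only where you invoke them afterwards: in factoring $\psi$ through $\phi$ (an onto map in $\Set$ is the coequalizer of its kernel pair) and in right-cancelling the surjection $E(\phi)$ to see that $\bar{\psi}$ preserves incidence. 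Second, your parenthetical justification that left adjoints preserve epimorphisms via cokernel pairs tacitly assumes pushouts exist in $\cat{H}$; cocompleteness is only recorded in Theorem \ref{limits-h}, which comes after this proposition (no circularity arises, since that theorem rests on an external citation, but the dependence is avoidable). The direct transposition argument needs no colimits at all: if $\psi_{1}\circ E(\phi)=\psi_{2}\circ E(\phi)$ in $\Set$, transpose across $E\dashv E^{\star}$ to get $\hat{\psi}_{1}\circ\phi=\hat{\psi}_{2}\circ\phi$ in $\cat{H}$, cancel the epimorphism $\phi$, and transpose back; likewise for $V$.
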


Likewise, essential monomorphisms and coessential epimorphisms are very similar to \cite[Propositions 3.3.2 \& 4.2.1]{grilliette3}.  Recall the following definitions.

\begin{defn}[{Neighborhoods \& isolation, \cite[p.\ 3 \& 5]{diestel}}]
Given a set-system hypergraph $G$ and vertices $v,w\in V(G)$, $v$ is \emph{adjacent} to $w$ if there is $e\in E(G)$ such that $\{v,w\}\subseteq\epsilon_{G}(e)$.  The \emph{neighborhood} of $v$ is the set $N_{G}(v)$ of vertices adjacent to $v$ in $G$.  On the other hand, $v$ is \emph{isolated} in $G$ if $N_{G}(v)=\emptyset$.  Let $\isol(G)$ be the set of all isolated vertices in $G$.
\end{defn}

The proofs of the characterizations are nearly identical to their quiver counterparts, and will be omitted.

\begin{prop}[Essential monomorphisms, $\cat{H}$]\label{ess-mono-h}
For $\xymatrix{G\ar[r]^{\phi} & H}\in\mathfrak{H}$, $\phi$ is essential if and only if the following conditions hold:
\begin{enumerate}
\item if $V(G)\neq\emptyset$, then $V(\phi)$ is bijective;
\item if $V(G)=\emptyset$, then $\card\left(V(H)\right)\leq 1$;
\item if $S\in\mathcal{P}V(G)$ and $\epsilon_{G}^{-1}(S)\neq\emptyset$, then $\mathcal{P}E(\phi)\left(\epsilon_{G}^{-1}(S)\right)=\epsilon_{H}^{-1}\left(\mathcal{P}V(\phi)(S)\right)$;
\item if $U\in\mathcal{P}V(H)$ and $U\neq\epsilon_{H}\left(E(\phi)(g)\right)$ for all $g\in E(G)$, then $\card\left(\epsilon_{H}^{-1}(U)\right)\leq 1$.
\end{enumerate}
\end{prop}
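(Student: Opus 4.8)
The plan is to use the definition that a monomorphism $\phi$ is \emph{essential} precisely when every $\psi\colon H\to K$ with $\psi\circ\phi$ a monomorphism is itself a monomorphism; contrapositively, every $\psi$ that fails to be monic must already fail after precomposition with $\phi$. Since $\phi$ is a monomorphism, Proposition \ref{monos-h} gives that $V(\phi)$ and $E(\phi)$ are injective and, more usefully, reduces every statement about monicity to injectivity of the vertex and edge components. A morphism $\psi$ fails to be monic exactly when $V(\psi)$ identifies two distinct vertices of $H$ or $E(\psi)$ identifies two distinct edges of $H$, so the whole argument amounts to controlling such collapses.

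For \emph{necessity} I would assume $\phi$ is essential and, supposing a given condition fails, manufacture a non-monic $\psi\colon H\to K$ for which $\psi\circ\phi$ stays monic, contradicting essentiality. If condition (1) or (2) fails there is a vertex $w\in V(H)$ outside $V(\phi)\!\left(V(G)\right)$ together with a second vertex $v'$ to merge it with --- an image vertex when $V(G)\neq\emptyset$, or simply any other vertex when $V(G)=\emptyset$ and $\card\!\left(V(H)\right)\geq 2$. I take $\psi$ to be the identity on edges and the map collapsing $w$ to $v'$ on vertices, with $\epsilon_{K}$ the image of $\epsilon_{H}$ under this collapse; this is a legitimate morphism, $V(\psi)$ is not injective, yet the collapse is invisible on $V(\phi)\!\left(V(G)\right)$, so $V(\psi)\circ V(\phi)=V(\phi)$ and $E(\psi)\circ E(\phi)=E(\phi)$ remain injective. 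For the edge conditions, a failure of (4) yields two distinct edges sharing a vertex set $U$ that is not $\epsilon_{H}\!\left(E(\phi)(g)\right)$ for any $g$, hence both lie outside $E(\phi)\!\left(E(G)\right)$; a failure of (3) yields --- using that injectivity of $V(\phi)$ makes $\mathcal{P}V(\phi)$ injective on subsets, so the offending edge cannot lie in $E(\phi)\!\left(E(G)\right)$ --- an edge $e^{*}$ outside the image sharing its vertex set with an image edge $e_{0}=E(\phi)(g_{0})$. In each case I collapse the two edges (permissible since they carry equal $\epsilon_{H}$-values), making $E(\psi)$ non-injective while $\psi\circ\phi$ stays monic because the identified pair avoids $E(\phi)\!\left(E(G)\right)$.

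For \emph{sufficiency} I assume (1)--(4) and take any $\psi$ with $\psi\circ\phi$ monic, aiming to show $V(\psi)$ and $E(\psi)$ are injective. Injectivity of $V(\psi)$ is immediate: if $V(G)\neq\emptyset$ then (1) makes $V(\phi)$ bijective, so every vertex of $H$ is of the form $V(\phi)(v)$ and injectivity of $V(\psi)\circ V(\phi)$ transfers to $V(\psi)$; if $V(G)=\emptyset$ then (2) gives $\card\!\left(V(H)\right)\leq 1$ and there is nothing to check. For edges, suppose $E(\psi)(e_{1})=E(\psi)(e_{2})$. The morphism identity for $\psi$ forces $\mathcal{P}V(\psi)\!\left(\epsilon_{H}(e_{1})\right)=\mathcal{P}V(\psi)\!\left(\epsilon_{H}(e_{2})\right)$, and injectivity of $V(\psi)$ gives $\epsilon_{H}(e_{1})=\epsilon_{H}(e_{2})=:U$. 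Now split on whether $U=\epsilon_{H}\!\left(E(\phi)(g)\right)$ for some $g$: if so, with $S:=\epsilon_{G}(g)$ one has $U=\mathcal{P}V(\phi)(S)$ and $\epsilon_{G}^{-1}(S)\neq\emptyset$, so (3) gives $\epsilon_{H}^{-1}(U)=\mathcal{P}E(\phi)\!\left(\epsilon_{G}^{-1}(S)\right)$; writing $e_{i}=E(\phi)(g_{i}')$ and cancelling the injective map $E(\psi)\circ E(\phi)$ yields $e_{1}=e_{2}$. Otherwise (4) forces $\card\!\left(\epsilon_{H}^{-1}(U)\right)\leq 1$, so again $e_{1}=e_{2}$, completing the proof that $\psi$ is monic.

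The main obstacle is condition (3). The crux is to recognize that the dichotomy ``$U$ is, or is not, the vertex set of an edge in the image of $\phi$'' is exactly the dividing line between (3) and (4), and that injectivity of $V(\phi)$ --- hence of $\mathcal{P}V(\phi)$ on subsets of $V(G)$ --- is what lets the equation $U=\mathcal{P}V(\phi)(S)$ pin down $S$ and thereby identify $\epsilon_{H}^{-1}(U)$ with the $\phi$-image of $\epsilon_{G}^{-1}(S)$. The remaining care is bookkeeping: checking that each collapsing $\psi$ built in the necessity direction is a genuine morphism of $\mathfrak{H}$ (the induced $\epsilon_{K}$ is well defined precisely because the merged cells share $\epsilon$-values) and that each merge is invisible to $\phi$, so that $\psi\circ\phi$ retains injective components.
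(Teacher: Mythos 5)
Your proof is correct and is essentially the argument the paper has in mind: the paper omits this proof entirely, deferring to the quiver counterpart \cite[Propositions 3.3.2 \& 4.2.1]{grilliette3}, and your strategy---collapse constructions for necessity (merging a non-image vertex into another vertex, or merging two parallel edges at most one of which is an image edge) together with the sufficiency dichotomy on whether $U=\epsilon_{H}\left(E(\phi)(g)\right)$ for some $g\in E(G)$, which splits the edge-injectivity check between conditions (3) and (4)---is exactly that argument transported to $\cat{H}$. The only flaw is verbal: in the failure-of-(3) construction the merged pair $\left\{e^{*},e_{0}\right\}$ does not ``avoid'' $E(\phi)\left(E(G)\right)$, since $e_{0}=E(\phi)(g_{0})$ is an image edge; what matters, and what your setup already provides, is that $e^{*}$ lies outside the image (via injectivity of $\mathcal{P}V(\phi)$), so the collapse never identifies two \emph{distinct image} edges and $E(\psi)\circ E(\phi)$ remains injective.
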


\begin{prop}[Coessential epimorphisms, $\cat{H}$]\label{coess-epis-h}
An epimorphism $\xymatrix{G\ar@{->>}[r]^{\phi} & H}\in\mathfrak{H}$ is coessential if and only if the following conditions hold:
\begin{enumerate}
\item $E(\phi)$ is bijective;
\item if $v\in\isol(G)$, then $V(\phi)(v)\in\isol(H)$;
\item if $w\in\isol(H)$, then there is a unique $v\in\isol(G)$ such that $V(\phi)(v)=w$.
\end{enumerate}
\end{prop}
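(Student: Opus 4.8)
The plan is to convert the categorical notion of coessentiality into a concrete statement about subobjects of $G$ and then verify each condition by an explicit deletion argument. Recall that $\phi$ is coessential means: for every $\psi\colon K\to G$, if $\phi\circ\psi$ is epi then $\psi$ is epi. First I would record, via Propositions \ref{monos-h} and \ref{epis-h}, that a map in $\cat{H}$ is mono (resp.\ epi) exactly when both components are injective (resp.\ onto), so that a map which is both is an isomorphism. Every $\psi\colon K\to G$ factors as an onto map onto the subobject $\left(E(\psi)[E(K)],V(\psi)[V(K)]\right)$ of $G$, closure under $\epsilon_G$ being automatic since $\epsilon_G\left(E(\psi)(e)\right)=V(\psi)\left[\epsilon_K(e)\right]$. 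As the first factor is epi, $\phi\circ\psi$ is epi iff $\phi$ restricted to the image is epi, and $\psi$ is epi iff that image is all of $G$. Hence $\phi$ is coessential if and only if no \emph{proper} subobject $K\subsetneq G$ satisfies $V(\phi)[V(K)]=V(H)$ and $E(\phi)[E(K)]=E(H)$. Here a subobject is a pair $E(K)\subseteq E(G)$, $V(K)\subseteq V(G)$ with $\epsilon_G(e)\subseteq V(K)$ for each $e\in E(K)$; thus any edge may be deleted, and any \emph{isolated} vertex (one lying in no edge) may be deleted, whereas a non-isolated vertex is pinned by the edges containing it.

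For necessity I would contrapose each condition into such a proper subobject. If $E(\phi)(e_{1})=E(\phi)(e_{2})$ with $e_{1}\neq e_{2}$, deleting $e_{2}$ gives a proper subobject still surjecting onto $H$, since $e_{1}$ still covers the common image; this contradicts coessentiality, so $E(\phi)$ is injective and, being onto, bijective, yielding (1). For (2), if $v\in\isol(G)$ but $w:=V(\phi)(v)\notin\isol(H)$, then $w\in\epsilon_{H}(\bar{e})$ for some $\bar{e}$; choosing $e$ with $E(\phi)(e)=\bar{e}$ gives $w=V(\phi)(x)$ for some $x\in\epsilon_{G}(e)$, and $x\neq v$ because $v$ lies in no edge, so deleting the isolated vertex $v$ still surjects onto $V(H)$, a contradiction. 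For (3), I would first observe that any $V(\phi)$-preimage of an isolated $w$ is forced to be isolated, since a non-isolated preimage $u\in\epsilon_{G}(e)$ would place $w=V(\phi)(u)$ in $\epsilon_{H}\left(E(\phi)(e)\right)$; existence of an isolated preimage then follows from surjectivity of $V(\phi)$, while uniqueness follows because two distinct isolated vertices over the same $w$ would let me delete one and still surject.

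For sufficiency, assume (1)--(3) and let $K$ be any subobject with $\phi|_{K}$ epi; I must show $K=G$. Bijectivity of $E(\phi)$ forces $E(\phi)[E(K)]=E(H)=E(\phi)[E(G)]$ to give $E(K)=E(G)$. For vertices, a non-isolated $v\in\epsilon_{G}(e)$ lies in $V(K)$ because $e\in E(K)$ and closure forces $\epsilon_{G}(e)\subseteq V(K)$. If instead $v\in\isol(G)$, then $w:=V(\phi)(v)\in\isol(H)$ by (2); since $V(\phi)[V(K)]=V(H)$, some $u\in V(K)$ has $V(\phi)(u)=w$, this $u$ is isolated by the preimage observation above, and the uniqueness clause (3) forces $u=v$, so $v\in V(K)$. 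Thus $V(K)=V(G)$ and $K=G$, so $\phi$ is coessential.

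The main obstacle is conceptual rather than computational: pinning down the reduction (that coessentiality is exactly the impossibility of deleting anything from $G$ while still covering $H$) and correctly classifying which vertices are deletable. The crucial technical point is the observation that a $V(\phi)$-preimage of an isolated vertex must itself be isolated; this is what allows conditions (2) and (3), phrased only in terms of isolated vertices, to control all of $V(\phi)$. One must also be careful with the convention that isolated means lying in no edge, so that such vertices are genuinely removable from a subobject, which is precisely the edge-case bookkeeping that makes the deletion constructions valid.
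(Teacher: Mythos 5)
Your proof is correct, and it is essentially the argument the paper has in mind: the paper omits its own proof, deferring to the quiver-case characterizations of \cite[Propositions 3.3.2 \& 4.2.1]{grilliette3}, and your deletion-style argument is precisely that argument adapted to $\cat{H}$. Your preliminary reduction of coessentiality to the nonexistence of a proper subobject of $G$ still surjecting onto $H$ (via image factorization and Propositions \ref{monos-h} and \ref{epis-h}) is a harmless repackaging of the same content, and your key observation that $V(\phi)$-preimages of isolated vertices are themselves isolated is exactly the pivot on which the quiver proof turns as well.
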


Next, $\cat{H}$ is complete and cocomplete.  Binary products and pullbacks for $\cat{H}$ were computed in \cite[p.\ 189-190]{dorfler1980}, and arbitrary products follow in direct analogy.  As such, the proof is omitted.

\begin{defn}[Construction of the product, $\cat{H}$]
Given an index set $\Lambda$, let $G_{\lambda}\in\ob(\cat{H})$ for all $\lambda\in\Lambda$. Let $Z:=\times_{\lambda\in\Lambda}E\left(G_{\lambda}\right)$ with canonical projections $r_{\lambda}:Z\to E\left(G_{\lambda}\right)$. Define a set-system hypergraph $G$ by
\begin{itemize}
\item a product vertex set $V(G):=\times_{\lambda\in\Lambda}V\left(G_{\lambda}\right)$ with canonical projections $p_{\lambda}:V(G)\to V\left(G_{\lambda}\right)$,
\item a product edge set colored by its endpoint set below, 
\[
E(G):=\left\{\left(A,\overrightarrow{e}\right)\in\mathcal{P}V(G)\times Z:\mathcal{P}\left(p_{\lambda}\right)(A)=\left(\epsilon_{G_{\lambda}}\circ r_{\lambda}\right)\left(\overrightarrow{e}\right)\forall\lambda\in\Lambda\right\}, 
\]
\item an endpoint map $\epsilon_{G}:E(G)\to\mathcal{P}V(G)$ by $\epsilon_{G}\left(A,\overrightarrow{e}\right):=A$.
\end{itemize}
Let $q_{\lambda}:E(G)\to E\left(G_{\lambda}\right)$ by $q_{\lambda}\left(A,\overrightarrow{e}\right):=r_{\lambda}\left(\overrightarrow{e}\right)$. Routine checks show that $\pi_{\lambda}:=\left(q_{\lambda},p_{\lambda}\right)$ is a morphism in $\cat{H}$ from $G$ to $G_{\lambda}$ for all $\lambda\in\Lambda$.
\end{defn}

\begin{lem}[Universal property of the product, $\mathfrak{H}$]\label{product-H}
If $\xymatrix{H\ar[r]^{\rho_{\lambda}} & G_{\lambda}}\in\mathfrak{H}$ for all $\lambda\in\Lambda$, then there is a unique $\xymatrix{H\ar[r]^{\hat{\rho}} & G}\in\mathfrak{H}$ such that $\pi_{\lambda}\circ\hat{\rho}=\rho_{\lambda}$.
\end{lem}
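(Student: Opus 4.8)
The plan is to build $\hat{\rho}=\left(E(\hat{\rho}),V(\hat{\rho})\right)$ coordinatewise, exploiting that both $V(G)=\times_{\lambda}V(G_{\lambda})$ and $Z=\times_{\lambda}E(G_{\lambda})$ are genuine products in $\mathbf{Set}$, and then reading off the $\mathcal{P}V(G)$-coordinate of each edge from the comma-category square that every $\mathfrak{H}$-morphism must satisfy. Recall that a morphism $\phi$ in $\mathfrak{H}=\left(id_{\mathbf{Set}}\downarrow\mathcal{P}\right)$ is a pair $\left(E(\phi),V(\phi)\right)$ obeying $\epsilon_{K}\circ E(\phi)=\mathcal{P}V(\phi)\circ\epsilon_{H}$; in particular each hypothesis map $\rho_{\lambda}$ satisfies $\epsilon_{G_{\lambda}}\circ E(\rho_{\lambda})=\mathcal{P}V(\rho_{\lambda})\circ\epsilon_{H}$.

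First I would define the vertex component. Since $V(G)$ with projections $p_{\lambda}$ is a product in $\mathbf{Set}$, the family $V(\rho_{\lambda}):V(H)\to V(G_{\lambda})$ induces a unique $V(\hat{\rho}):V(H)\to V(G)$ with $p_{\lambda}\circ V(\hat{\rho})=V(\rho_{\lambda})$ for all $\lambda$. For the edge component I would set
\[
E(\hat{\rho})(f):=\left(\mathcal{P}V(\hat{\rho})\left(\epsilon_{H}(f)\right),\left(E(\rho_{\lambda})(f)\right)_{\lambda\in\Lambda}\right),
\]
so that the $Z$-coordinate is dictated by the product $Z$ (to force $q_{\lambda}\circ E(\hat{\rho})=E(\rho_{\lambda})$) and the $\mathcal{P}V(G)$-coordinate is dictated by the requirement $\epsilon_{G}\circ E(\hat{\rho})=\mathcal{P}V(\hat{\rho})\circ\epsilon_{H}$.

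The main obstacle, and the only step that is not immediate, is verifying that $E(\hat{\rho})(f)$ actually lands in the subset $E(G)$, i.e.\ that its two coordinates satisfy the defining constraint $\mathcal{P}(p_{\lambda})(A)=\left(\epsilon_{G_{\lambda}}\circ r_{\lambda}\right)(\overrightarrow{e})$ for every $\lambda$. Here I would compute the left side, $\mathcal{P}(p_{\lambda})\circ\mathcal{P}V(\hat{\rho})\left(\epsilon_{H}(f)\right)=\mathcal{P}\left(p_{\lambda}\circ V(\hat{\rho})\right)\left(\epsilon_{H}(f)\right)=\mathcal{P}V(\rho_{\lambda})\left(\epsilon_{H}(f)\right)$, and the right side, $\epsilon_{G_{\lambda}}\left(E(\rho_{\lambda})(f)\right)$; these agree precisely because $\rho_{\lambda}$ is an $\mathfrak{H}$-morphism. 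This is the crux where the commuting-square hypotheses on the $\rho_{\lambda}$ are genuinely used.

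Finally I would dispatch the remaining routine points. That $\hat{\rho}$ is a morphism is immediate from the choice of the $\mathcal{P}V(G)$-coordinate, and $\pi_{\lambda}\circ\hat{\rho}=\rho_{\lambda}$ holds because the vertex equation is the defining property of $V(\hat{\rho})$ while the edge equation follows from $q_{\lambda}\left(E(\hat{\rho})(f)\right)=r_{\lambda}\left(\left(E(\rho_{\mu})(f)\right)_{\mu}\right)=E(\rho_{\lambda})(f)$. For uniqueness, given any $\psi$ with $\pi_{\lambda}\circ\psi=\rho_{\lambda}$, the universal properties of the products $V(G)$ and $Z$ force $V(\psi)=V(\hat{\rho})$ and pin down the $Z$-coordinate of each $E(\psi)(f)$, while the comma square for $\psi$ forces its $\mathcal{P}V(G)$-coordinate to equal $\mathcal{P}V(\hat{\rho})\left(\epsilon_{H}(f)\right)$; hence $\psi=\hat{\rho}$.
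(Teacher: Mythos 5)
Your proposal is correct, and it supplies precisely the routine verification that the paper itself omits: the paper defers to D\"{o}rfler--Waller's computation of binary products and pullbacks and declares the arbitrary-product case analogous, without writing out the argument. Your construction of $\hat{\rho}$ (vertex component from the product $V(G)$, edge component with $Z$-coordinate $\left(E(\rho_{\lambda})(f)\right)_{\lambda}$ and $\mathcal{P}V(G)$-coordinate $\mathcal{P}V(\hat{\rho})\left(\epsilon_{H}(f)\right)$), the membership check that this pair satisfies the defining constraint of $E(G)$, and the two-coordinate uniqueness argument are exactly the intended proof for the paper's stated construction.
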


These properties culminate in the following theorem.

\begin{thm}[Limit properties of $\cat{H}$]\label{limits-h}
The category $\cat{H}$ is complete, cocomplete, and regular.
\end{thm}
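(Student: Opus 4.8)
The plan is to treat the three claims separately, building on the product already constructed before Lemma~\ref{product-H} and on the epimorphism characterization of Proposition~\ref{epis-h}.

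For \emph{completeness}, since $\cat{H}$ has all small products by Lemma~\ref{product-H}, it suffices to construct equalizers, as products together with equalizers generate all small limits. Given morphisms $\phi,\psi\colon G\to H$ in $\cat{H}$, I would let $V(K)$ be the equalizer of $V(\phi),V(\psi)$ in $\Set$ and take
\[
E(K):=\left\{e\in E(G):E(\phi)(e)=E(\psi)(e)\text{ and }\epsilon_{G}(e)\in\mathcal{P}V(K)\right\},
\]
with $\epsilon_{K}$ the restriction of $\epsilon_{G}$. The subtlety here --- and the reason one cannot simply quote the comma-category limit theorem --- is that $\mathcal{P}$ is not continuous: an edge equalized by $E(\phi)$ and $E(\psi)$ need only have its \emph{endpoint set} equalized rather than each individual endpoint, so the extra condition $\epsilon_{G}(e)\in\mathcal{P}V(K)$ must be imposed by hand. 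The morphism identity $\epsilon_{G}\circ E(\rho)=\mathcal{P}V(\rho)\circ\epsilon_{L}$ then shows that any $\rho\colon L\to G$ with $\phi\rho=\psi\rho$ has $E(\rho)$ landing inside $E(K)$, which yields the unique factorization. Equivalently, since pullbacks for $\cat{H}$ are available from \cite{dorfler1980}, one may realize the equalizer as a pullback of the diagonal.

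For \emph{cocompleteness}, I would instead invoke the dual comma-category principle directly. Because $\Set$ is cocomplete and the domain functor $id_{\Set}$ is cocontinuous --- it is its own left adjoint --- the comma category $\cat{H}=\left(id_{\Set}\downarrow\mathcal{P}\right)$ is cocomplete: one forms the colimit $E$ of the edge components and the colimit $V$ of the vertex components in $\Set$, and lets the structure map $E\to\mathcal{P}V$ be the arrow induced, via cocontinuity of $id_{\Set}$, by the universal property of $E$. No hypothesis on $\mathcal{P}$ enters, which is precisely why cocompleteness is unobstructed while completeness forced the hand-built limits above.

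For \emph{regularity}, finite completeness is now in hand and coequalizers of kernel pairs are supplied by cocompleteness, so only the stability of regular epimorphisms under pullback remains. By Proposition~\ref{epis-h} a regular epimorphism is exactly a pair of surjections $V(\phi),E(\phi)$, so I must check that the pullback of such a map along any $\psi\colon G'\to H$ is again componentwise surjective. The vertex coordinate is immediate: $V$ preserves limits (being a right adjoint, $V^{\diamond}\dashv V$), so $V$ of the pullback is the $\Set$-pullback of $V(\phi)$ along $V(\psi)$, where surjections are stable. The edge coordinate is the crux and is where the failure of $E$ to preserve limits must be circumvented: given $e'\in E(G')$, surjectivity of $E(\phi)$ yields $e\in E(G)$ with $E(\phi)(e)=E(\psi)(e')$, and I would exhibit a pullback edge over $e'$ by choosing the endpoint set $A:=\left(\epsilon_{G}(e)\times\epsilon_{G'}(e')\right)\cap V(P)$ inside the pullback vertex set $V(P)=V(G)\times_{V(H)}V(G')$. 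The compatibility $\mathcal{P}V(\phi)\left(\epsilon_{G}(e)\right)=\epsilon_{H}\left(E(\psi)(e')\right)=\mathcal{P}V(\psi)\left(\epsilon_{G'}(e')\right)$ forces $A$ to project onto $\epsilon_{G}(e)$ and onto $\epsilon_{G'}(e')$, so $A$ determines a genuine edge of the pullback mapping to $e'$, giving surjectivity. The main obstacle throughout is exactly this: both the equalizer and the edge-surjectivity step require repairing the naive $\Set$-level construction on edges with an explicit, endpoint-compatible set, all else being formal.
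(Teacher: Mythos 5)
Your proposal is correct and takes essentially the same route as the paper: completeness from the products of Lemma~\ref{product-H} plus one further limit construction (you build equalizers by hand; the paper invokes the pullbacks of D\"orfler--Waller), cocompleteness from the standard comma-category colimit argument (which the paper obtains by citing Rydeheard--Burstall), and regularity from kernel pairs, coequalizers of kernel pairs, and pullback-stability of the componentwise-surjective epimorphisms of Proposition~\ref{epis-h}. The only substantive difference is presentational: you inline the constructions that the paper delegates to references, and those details --- the endpoint condition $\epsilon_{G}(e)\in\mathcal{P}V(K)$ in the equalizer and the endpoint set $A=\left(\epsilon_{G}(e)\times\epsilon_{G'}(e')\right)\cap V(P)$ in the pullback diagram chase --- are exactly what is needed and check out.
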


\begin{proof}
By \cite[Theorem 3]{computationalcategory}, $\cat{H}$ is cocomplete.  Also, $\cat{H}$ is complete by applying \cite[Theorem I.2.8.1 \& Proposition I.2.8.2]{borceux} to Lemma \ref{product-H} and the pullbacks of \cite[p.\ 190]{dorfler1980}.  As $\cat{H}$ is complete, every morphism admits a kernel pair.  As $\cat{H}$ is cocomplete, the coequalizer of a kernel pair exists.  Lastly, the pullback of a regular epimorphism is a regular epimorphism again by a diagram chase as mentioned in \cite[p.\ 190]{dorfler1980}.
\qed \end{proof}

Finally, every object of $\cat{H}$ admits a partial morphism representer, constructed by appending a new vertex and a new edge for every subset of vertices.  These new components play the role of ``false'', while the original components correspond to ``true''.

\begin{defn}[Partial morphism representer, $\cat{H}$]
For $G\in\ob(\cat{H})$, define a set-system hypergraph $\tilde{G}$ by
\begin{itemize}
\item $V\left(\tilde{G}\right):=\left(\{1\}\times V(G)\right)\cup\left\{(0,0)\right\}$,
\item $E\left(\tilde{G}\right):=\left(\{1\}\times E(G)\right)\cup\left(\{0\}\times\mathcal{P}V\left(\tilde{G}\right)\right)$,
\item $\epsilon_{\tilde{G}}(n,x):=\left\{\begin{array}{rcl}
\{1\}\times\epsilon_{G}(x),	&	n=1,\\
x,	&	n=0.\\
\end{array}\right.$
\end{itemize}
Likewise, define $\xymatrix{G\ar[r]^{\eta_{G}} & \tilde{G}}\in\cat{H}$ by $V\left(\eta_{G}\right)(v):=(1,v)$, $E\left(\eta_{G}\right)(e):=(1,e)$.  By Proposition \ref{monos-h}, $\eta_{G}$ is monic in $\cat{H}$.
\end{defn}

\begin{thm}[Universal property of $\tilde{\Box}$]\label{partialmorphism-h}
If $\xymatrix{K & \textrm{ }H\ar@{>->}[l]_{\phi}\ar[r]^{\psi} & G}\in\cat{H}$ where $\phi$ is monic, then there is a unique $\xymatrix{K\ar[r]^{\hat{\psi}} & \tilde{G}}\in\cat{H}$ such that $\xymatrix{K & \textrm{ }H\ar@{>->}[l]_{\phi}\ar[r]^{\psi} & G}$ is a pullback of $\xymatrix{K\ar[r]^{\hat{\psi}} & \tilde{G} & \textrm{ }G\ar@{>->}[l]_{\eta_{G}}}$.  Consequently, if $T$ is a terminal object in $\cat{H}$, $\tilde{T}$ equipped with $\eta_{T}$ is a subobject classifier for $\cat{H}$.
\end{thm}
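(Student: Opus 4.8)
The plan is to recognize $\tilde{G}$ as the partial-morphism classifier of $G$ and to construct the classifying map $\hat{\psi}$ explicitly, treating the part of $K$ covered by the subobject $\phi$ as ``true'' and everything else as ``false.'' Since $\phi$ is monic, Proposition \ref{monos-h} makes $V(\phi)$ and $E(\phi)$ injective, so each $x\in V(K)$ is $V(\phi)(h)$ for at most one $h$, and likewise for edges. Accordingly I would set $V(\hat{\psi})(x):=(1,V(\psi)(h))$ when $x=V(\phi)(h)$ and $V(\hat{\psi})(x):=(0,0)$ otherwise; on edges, $E(\hat{\psi})(a):=(1,E(\psi)(k))$ when $a=E(\phi)(k)$ and $E(\hat{\psi})(a):=\left(0,\mathcal{P}V(\hat{\psi})(\epsilon_{K}(a))\right)$ otherwise. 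The second coordinate of a ``false'' edge is chosen to be its own endpoint set, which is exactly what the tautological clause $\epsilon_{\tilde{G}}(0,x)=x$ in the construction demands.

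First I would check that $\hat{\psi}$ is a morphism, i.e.\ $\epsilon_{\tilde{G}}\circ E(\hat{\psi})=\mathcal{P}V(\hat{\psi})\circ\epsilon_{K}$. On a false edge this is immediate from $\epsilon_{\tilde{G}}(0,S)=S$; on a true edge $a=E(\phi)(k)$ it follows by rewriting $\epsilon_{K}(a)=\mathcal{P}V(\phi)(\epsilon_{H}(k))$ (as $\phi$ is a morphism), applying the forced identity $V(\hat{\psi})\circ V(\phi)=V(\eta_{G})\circ V(\psi)$, and then using that $\psi$ is a morphism. Next I would verify the pullback. Because $\mathcal{P}$ is not continuous, limits in $\cat{H}$ are not computed coordinatewise, so rather than comparing $H$ to a coordinatewise limit I would check the universal property directly: given $a\colon L\to K$ and $b\colon L\to G$ with $\hat{\psi}\circ a=\eta_{G}\circ b$, the compatibility forces the $\hat{\psi}$-image of each $a$-image into the $1$-tagged part, hence (by the definition of $\hat{\psi}$) forces the $a$-image into the image of $\phi$; injectivity of $V(\phi),E(\phi)$ then produces the unique lift $u\colon L\to H$ with $\phi u=a$ and $\psi u=b$. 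That $u$ respects endpoints follows from injectivity of $\mathcal{P}V(\phi)$ together with $a$ and $\phi$ being morphisms.

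For uniqueness of $\hat{\psi}$, commutativity of the square together with injectivity of $V(\phi),E(\phi)$ pins down $\hat{\psi}$ on the image of $\phi$ as the $1$-tagged data above. For a vertex or edge outside that image, the pullback property forces its $\hat{\psi}$-image out of the $1$-tagged part: were it to land in $\{1\}\times V(G)$ or $\{1\}\times E(G)$, the corresponding compatible pair would lift through the pullback and exhibit the point as lying in the image of $\phi$. Hence such a vertex must go to $(0,0)$ and such an edge must carry tag $0$, after which the morphism condition forces its second coordinate to be $\mathcal{P}V(\hat{\psi})(\epsilon_{K}(a))$. This is the step I expect to be the main obstacle: the morphism condition alone does not decide the $0/1$ tag of an edge (an edge whose endpoints happen all to be covered could \emph{a priori} be sent to a genuine edge of $G$), so excluding a $1$-tag for an uncovered edge genuinely requires the pullback property, realized by a tailored test object $L$ carrying a single edge mapped onto the edge in question, whose lift would place that edge in the image of $E(\phi)$, a contradiction.

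Finally, for the subobject classifier I would specialize part~(1). By Theorem \ref{limits-h} the category $\cat{H}$ has a terminal object $T$, and $\eta_{T}$ is monic by Proposition \ref{monos-h}. Given any monomorphism $\phi\colon H\to K$, the only morphism $\psi\colon H\to T$ is the terminal map, so part~(1) yields a unique $\hat{\psi}\colon K\to\tilde{T}$ making $H$ the pullback of $K\xrightarrow{\hat{\psi}}\tilde{T}\xleftarrow{\eta_{T}}T$. This is precisely the defining universal property of a subobject classifier with $\eta_{T}\colon T\to\tilde{T}$ playing the role of ``true,'' so $\tilde{T}$ equipped with $\eta_{T}$ classifies subobjects in $\cat{H}$.
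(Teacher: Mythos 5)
Your proposal is correct and takes essentially the same approach as the paper: you construct exactly the same classifying morphism $\hat{\psi}$ (true/false tagging, with the false edges carrying their own $\hat{\psi}$-image endpoint sets), with monicity of $\phi$ via Proposition \ref{monos-h} ensuring well-definedness. The paper's proof ends at that definition, leaving implicit the morphism check, the pullback verification, the uniqueness argument (including the genuinely delicate point you flag, that excluding a $1$-tag on an uncovered edge requires a test object and the pullback property, not just the morphism condition), and the subobject-classifier specialization, all of which you supply correctly.
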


\begin{proof}
Define $\xymatrix{K\ar[r]^{\hat{\psi}} & \tilde{G}}\in\cat{H}$ by
\begin{itemize}
\item $V\left(\hat{\psi}\right)(v):=\left\{\begin{array}{cc}
\left(1,V(\psi)(w)\right),	&	v=V(\phi)(w),\\
\left(0,0\right),	&	\textrm{otherwise},\
\end{array}\right.$
\item $E\left(\hat{\psi}\right)(e):=\left\{\begin{array}{cc}
\left(1,E(\psi)(f)\right),	&	e=E(\phi)(f),\\
\left(0,\left(\mathcal{P}V\left(\hat{\psi}\right)\circ\epsilon_{K}\right)(e)\right),	&	\textrm{otherwise},\
\end{array}\right.$
\end{itemize}
As $\phi$ is monic, Proposition \ref{monos-h} shows that $\hat{\psi}$ is well-defined.
\qed \end{proof}

An immediate consequence is that $\cat{H}$ has enough injective objects, but moreover, injective objects can be completely characterized.  Indeed, an injective set-system hypergraph is precisely a generalized complete hypergraph.

\begin{cor}[Characterization of injective objects, $\cat{H}$]
A set-system hypergraph $G$ is injective in $\cat{H}$ if and only if the following conditions hold:
\begin{enumerate}
\item $V(G)\neq\emptyset$,
\item $\epsilon_{G}^{-1}(S)\neq\emptyset$ for all $S\in\mathcal{P}V(G)$.
\end{enumerate}
\end{cor}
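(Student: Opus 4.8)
The plan is to argue both implications directly from the definition of an injective object together with the identification of monomorphisms in Proposition \ref{monos-h}, paralleling the construction used for the partial morphism representer in Theorem \ref{partialmorphism-h}. Conceptually, the two conditions are exactly what is needed to retract $\tilde{G}$ onto $G$ along $\eta_{G}$: condition (1) supplies a ``default'' vertex to absorb the extra point $(0,0)$ of $\tilde{G}$, and condition (2) supplies, for each subset of $V(G)$, an edge realizing it, so that the adjoined edges $\{0\}\times\mathcal{P}V(\tilde{G})$ have somewhere to go. Thus I expect $G$ to be injective precisely when $\eta_{G}$ is a split monomorphism, and these two conditions encode exactly that retraction.

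For the sufficiency direction, suppose (1) and (2) hold and let $\xymatrix{K & H\ar@{>->}[l]_{\phi}\ar[r]^{\psi} & G}$ with $\phi$ monic; I must produce $\hat{\psi}\colon K\to G$ with $\hat{\psi}\circ\phi=\psi$. By Proposition \ref{monos-h}, $V(\phi)$ and $E(\phi)$ are injective, so each vertex or edge of $K$ in the image of $\phi$ has a unique preimage. Using (1), fix $v_{0}\in V(G)$ and define $V(\hat{\psi})$ to be $V(\psi)$ on the image of $V(\phi)$ and the constant $v_{0}$ elsewhere. On an edge $e=E(\phi)(f)$ set $E(\hat{\psi})(e):=E(\psi)(f)$; on an edge $e$ outside the image, condition (2) guarantees some edge of $G$ with endpoint set $\mathcal{P}V(\hat{\psi})\left(\epsilon_{K}(e)\right)$, and I choose one such edge to be $E(\hat{\psi})(e)$. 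A short check that $\epsilon_{G}\circ E(\hat{\psi})=\mathcal{P}V(\hat{\psi})\circ\epsilon_{K}$ holds on both kinds of edges---on image edges it reduces to $\psi$ and $\phi$ being morphisms, on the others it holds by the choice just made---shows $\hat{\psi}\in\cat{H}$, while $\hat{\psi}\circ\phi=\psi$ by construction.

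For the necessity direction, assume $G$ is injective and test it against two carefully chosen monomorphisms. For (1), take the monic map from the initial object $\xymatrix{\mathbb{0}\ar@{>->}[r] & V^{\diamond}(\{\ast\})}$ together with the unique map $\mathbb{0}\to G$; an extension $V^{\diamond}(\{\ast\})\to G$ would require a function $\{\ast\}\to V(G)$, forcing $V(G)\neq\emptyset$. For (2), fix $S\in\mathcal{P}V(G)$ and let $\xymatrix{V^{\diamond}(S)\ar@{>->}[r]^(0.45){\phi} & K}$ be the inclusion (identity on vertices, empty on edges, hence monic by Proposition \ref{monos-h}), where $K$ has vertex set $S$ and a single edge $\star$ with $\epsilon_{K}(\star)=S$; let $\psi\colon V^{\diamond}(S)\to G$ be the inclusion $S\hookrightarrow V(G)$ on vertices. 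An extension $\hat{\psi}\colon K\to G$ satisfies $V(\hat{\psi})(s)=s$ for $s\in S$, so the morphism condition yields $\epsilon_{G}\left(E(\hat{\psi})(\star)\right)=\mathcal{P}V(\hat{\psi})(S)=S$, exhibiting an edge with endpoint set $S$ and hence $\epsilon_{G}^{-1}(S)\neq\emptyset$.

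The routine verifications here are the two morphism-compatibility checks; the only genuinely delicate step is the necessity of (2), where the test hypergraph $K$ must be designed so that forcing its one edge onto $S$ extracts precisely the nonemptiness of $\epsilon_{G}^{-1}(S)$. Everything else follows the template already established in Theorem \ref{partialmorphism-h}.
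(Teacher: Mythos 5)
Your proposal is correct, but it takes a genuinely different route from the paper. The paper runs the entire argument through the partial morphism representer $\tilde{G}$ of Theorem \ref{partialmorphism-h}: for necessity it tests injectivity against the single monomorphism $\eta_{G}\colon G\rightarrowtail\tilde{G}$ paired with $id_{G}$, obtaining a retraction $\psi\colon\tilde{G}\to G$, and then reads off condition (1) from $V(\psi)(0,0)\in V(G)$ and condition (2) from the image of the adjoined edge $\left(0,\{1\}\times S\right)$; for sufficiency it uses $v_{0}$ and the edges $e_{S}$ to construct such a retraction explicitly, so that $G$ is a retract of $\tilde{G}$ and hence injective (implicitly invoking the standard facts that a partial morphism representer is injective and that retracts of injectives are injective). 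You instead avoid $\tilde{G}$ entirely: your sufficiency direction builds the extension $\hat{\psi}\colon K\to G$ along an arbitrary monomorphism by hand, using Proposition \ref{monos-h} for well-definedness on the image of $\phi$, the default vertex $v_{0}$ off the vertex image, and condition (2) to choose edges realizing $\mathcal{P}V\left(\hat{\psi}\right)\left(\epsilon_{K}(e)\right)$ off the edge image; your necessity direction uses two small bespoke tests, the monomorphism $\mathbb{0}\rightarrowtail V^{\diamond}(\{\ast\})$ for condition (1) and the inclusion of $V^{\diamond}(S)$ into the one-edge hypergraph $K$ with $\epsilon_{K}(\star)=S$ for condition (2) (this $K$ is, incidentally, the generator $G_{S}$ the paper introduces later). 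What the paper's approach buys is brevity and reuse of machinery already established; what yours buys is self-containment---in particular, it does not depend on the injectivity of $\tilde{G}$, which the paper never proves explicitly and which itself requires a nontrivial pullback argument. Both arguments are sound; your verification of the morphism condition $\epsilon_{G}\circ E\left(\hat{\psi}\right)=\mathcal{P}V\left(\hat{\psi}\right)\circ\epsilon_{K}$ on image edges correctly reduces to $V\left(\hat{\psi}\right)\circ V(\phi)=V(\psi)$, which holds by construction, and your test objects handle the edge case $S=\emptyset$ without modification.
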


\begin{proof}
$(\Rightarrow)$ As $G$ is injective with respect to monomorphisms, there is $\xymatrix{\tilde{G}\ar[r]^{\psi} & G}\in\cat{H}$ such that $\psi\circ\eta_{G}=id_{G}$.
\[\xymatrix{
G\\
G\ar[u]^{id_{G}}\ar[r]_{\eta_{G}}	&	\tilde{G}\ar@{..>}[ul]_{\exists\psi}\\
}\]
Then, $V(\psi)(0,0)\in V(G)$ and
\[\begin{array}{rcl}
\epsilon_{G}\left(E(\psi)\left(0,\{0\}\times S\right)\right)
&	=	&	\mathcal{P}V(\psi)\left(\epsilon_{\tilde{G}}\left(0,\{1\}\times S\right)\right)
=\mathcal{P}V(\psi)\left(\{1\}\times S\right)
=\mathcal{P}V\left(\psi\circ\eta_{G}\right)(S)\\
&	=	&	\mathcal{P}V\left(id_{G}\right)(S)
=id_{\mathcal{P}V(G)}(S)
=S.
\end{array}\]

$(\Leftarrow)$ Fix $v_{0}\in V(G)$ and $e_{S}\in\epsilon_{G}^{-1}(S)$ for $S\in\mathcal{P}V(G)$.  Define $\xymatrix{\tilde{G}\ar[r]^{\psi} & G}\in\cat{H}$ by
\begin{itemize}
\item $V(\psi)(n,x):=\left\{\begin{array}{cc}
x,	&	n=1,\\
v_{0},	&	n=0,
\end{array}\right.$
\item $E(\psi)(n,y):=\left\{\begin{array}{cc}
y,	&	n=1,\\
e_{\mathcal{P}V(\psi)(S)},	&	n=0.
\end{array}\right.$
\end{itemize}
Routine calculations show that $\psi\circ\eta_{G}=id_{G}$, meaning $G$ is a retract of $\tilde{G}$.
\qed \end{proof}

However, the partial morphism representer is not generally the best injective embedding, i.e.\ the injective envelope.  Instead, the latter is achieved by only appending vertices or edges where none previously existed, which is directly analogous to \cite[Definition 3.3.3]{grilliette3}.  The proof of the characterization is nearly identical to \cite[Theorem 3.3.5]{grilliette3} and will be omitted.

\begin{defn}[Loading of a set-system hypergraph]
For $G\in\ob(\cat{H})$, define the \emph{loading} of $G$ as the set-system hypergraph $L_{\cat{H}}(G)$ by
\begin{itemize}
\item $VL_{\cat{H}}(G):=\left\{\begin{array}{rcl}
V(G), & V(G)\neq\emptyset,\\
\{0\}, & V(G)=\emptyset,\\
\end{array}\right.$,
\item $EL_{\cat{H}}(G):=\left(\{1\}\times E(G)\right)\cup\left\{(0,S):S\in\mathcal{P}VL_{\cat{H}}(G),\epsilon_{G}^{-1}(S)=\emptyset\right\}$,
\item $\epsilon_{L_{\cat{H}}(G)}(n,x):=\left\{\begin{array}{rcl}
\{1\}\times\epsilon_{G}(x),	&	n=1,\\
x,	&	n=0.\\
\end{array}\right.$
\end{itemize}
Likewise, define $\xymatrix{G\ar[r]^{j_{G}} & L_{\cat{H}}(G)}\in\cat{H}$ by $V\left(j_{G}\right)(v):=v$, $E\left(j_{G}\right)(e):=(1,e)$.  By Proposition \ref{ess-mono-h}, $j_{G}$ is an essential monomorphism in $\cat{H}$.
\end{defn}

\begin{thm}[Injective envelope, $\cat{H}$]
For a set-system hypergraph $G$, $L_{\cat{H}}(G)$ equipped with $j_{G}$ is an injective envelope of $G$ in $\cat{H}$.
\end{thm}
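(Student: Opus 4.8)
The plan is to recall that an injective envelope is precisely an essential monomorphism whose codomain is an injective object, so there are exactly two things to verify: that $j_{G}$ is essential, and that $L_{\cat{H}}(G)$ is injective. The first is already recorded in the definition of the loading, where Proposition \ref{ess-mono-h} is invoked to see that $j_{G}$ is an essential monomorphism; concretely, $V(j_{G})$ is the identity on a nonempty $V(G)$ (or $V(G)=\emptyset$ is sent into the singleton $\{0\}$), the edges $(1,e)$ reproduce exactly the endpoint sets already realized in $G$, and each appended edge $(0,S)$ uniquely realizes a subset $S$ that was previously unrealized. Granting essentiality, the whole content of the theorem reduces to injectivity.

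For injectivity I would apply the preceding characterization of injective objects in $\cat{H}$, which asks for (1) $VL_{\cat{H}}(G)\neq\emptyset$ and (2) $\epsilon_{L_{\cat{H}}(G)}^{-1}(S)\neq\emptyset$ for every $S\in\mathcal{P}VL_{\cat{H}}(G)$. Condition (1) is immediate from the construction: $VL_{\cat{H}}(G)=V(G)$ when $V(G)\neq\emptyset$, and $VL_{\cat{H}}(G)=\{0\}$ otherwise, so the vertex set is nonempty in either case. For condition (2), fix $S\in\mathcal{P}VL_{\cat{H}}(G)$ and split on whether $S$ already occurs as an endpoint set in $G$. If $\epsilon_{G}^{-1}(S)\neq\emptyset$, choose $e\in\epsilon_{G}^{-1}(S)$; then the edge $(1,e)$ of $L_{\cat{H}}(G)$ has endpoint set $S$. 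If instead $\epsilon_{G}^{-1}(S)=\emptyset$, then by construction $(0,S)$ is an edge of $L_{\cat{H}}(G)$ with endpoint set $S$. In both cases $S$ lies in the image of $\epsilon_{L_{\cat{H}}(G)}$, so (2) holds and $L_{\cat{H}}(G)$ is injective.

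The step I expect to be the main obstacle is the degenerate case $V(G)=\emptyset$, where the bookkeeping of preimages must be checked carefully: here the codomain of $\epsilon_{G}$ is $\mathcal{P}(\emptyset)=\{\emptyset\}$, whereas $VL_{\cat{H}}(G)=\{0\}$ now has the two subsets $\emptyset$ and $\{0\}$. One must confirm that indexing the appended edges against subsets of $\{0\}$ still supplies a preimage for each such subset. In particular $\{0\}$ is never an endpoint set of $G$, so the appended edge $(0,\{0\})$ is exactly what secures condition (2) for that subset, while $\emptyset$ is covered either by an $(1,e)$ edge when $E(G)\neq\emptyset$ or by the appended $(0,\emptyset)$ otherwise. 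With injectivity of $L_{\cat{H}}(G)$ established and $j_{G}$ already known to be an essential monomorphism, the pair $\left(L_{\cat{H}}(G),j_{G}\right)$ satisfies the defining property of an injective envelope, completing the argument.
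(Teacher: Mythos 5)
Your proposal is correct and is essentially the proof the paper intends: the paper omits the argument (deferring to the analogous quiver result), but its surrounding results supply exactly your two steps --- essentiality of $j_{G}$ via Proposition~\ref{ess-mono-h}, already noted in the definition of loading, and injectivity of $L_{\cat{H}}(G)$ via the characterization corollary, with every $S\in\mathcal{P}VL_{\cat{H}}(G)$ realized either by an original edge $(1,e)$ when $\epsilon_{G}^{-1}(S)\neq\emptyset$ or by an appended edge $(0,S)$ otherwise. Your attention to the degenerate case $V(G)=\emptyset$, where $\mathcal{P}VL_{\cat{H}}(G)=\left\{\emptyset,\{0\}\right\}$ strictly exceeds the codomain $\mathcal{P}(\emptyset)$ of $\epsilon_{G}$, is the only point of genuine bookkeeping, and you resolve it correctly.
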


\subsubsection{Failure of Exponentials and Projectives}

Considering the topos-like properties, one could be forgiven for assuming that $\cat{H}$ itself was a topos.  Unfortunately, it is not, failing only requirement to be cartesian closed.

\begin{lem}[Exponential failure]\label{topos-fail}
The category $\cat{H}$ is not cartesian closed, and $E$ is not continuous.
\end{lem}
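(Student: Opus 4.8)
The plan is to establish both claims by exhibiting a single coequalizer in $\Set$ that the functor $E$ fails to preserve; since cartesian closedness would force $E$ to preserve all colimits (a left-adjoint-like property is not what we use, rather we exploit that $-\times G$ would be a left adjoint and hence cocontinuous), the cleanest route is to directly contradict continuity and then separately note the obstruction to cartesian closedness. First I would recall that if $\cat{H}$ were cartesian closed, then for each fixed $G$ the functor $G\times-$ would admit a right adjoint $(-)^{G}$ and would therefore be cocontinuous; in particular binary products would distribute over coproducts. So the concrete strategy is to find objects $A,B,C$ for which $A\times(B+C)\not\cong (A\times B)+(A\times C)$, using the explicit product construction given in the Definition preceding Lemma \ref{product-H}.

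The key feature to exploit is the ``color-matching'' constraint in the product edge set, namely that an edge of $G_{1}\times G_{2}$ is a pair $(A,\overrightarrow{e})$ whose endpoint set $A$ must project correctly onto each $\epsilon_{G_\lambda}(r_\lambda(\overrightarrow e))$. The simplest diagnostic objects are a one-vertex, one-loop hypergraph and the disjoint pieces of a coproduct. Concretely I would take $G$ with a single vertex carrying one edge whose endpoint set is that vertex, and compute $G\times(G_1+G_2)$ versus $(G\times G_1)+(G\times G_2)$ for small $G_1,G_2$ (for instance an edge on one vertex and an empty edge), checking the cardinalities of the resulting edge sets. Because the endpoint-set coordinate $A$ ranges over $\mathcal P$ of the \emph{product} vertex set in $G\times(G_1+G_2)$, it can pair together edges coming from different summands in a way the right-hand coproduct cannot reproduce, so the two edge sets will differ in cardinality. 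This distributivity failure contradicts cocontinuity of $G\times-$, hence contradicts the existence of the right adjoint $(-)^G$, proving $\cat{H}$ is not cartesian closed.

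For the second assertion, that $E$ is not continuous, I would produce an explicit equalizer or product in $\cat{H}$ whose image under $E$ is not the corresponding limit in $\Set$; the natural choice is again the binary product, since the edge set of $G_1\times G_2$ is the constrained subset $\{(A,\overrightarrow e):\mathcal P(p_\lambda)(A)=\epsilon_{G_\lambda}(r_\lambda(\overrightarrow e))\}$ of $\mathcal P V(G)\times E(G_1)\times E(G_2)$, which is in general a \emph{proper} subset of $E(G_1)\times E(G_2)$. Thus $E(G_1\times G_2)\neq E(G_1)\times E(G_2)=E(G_1)\times E(G_2)$ in $\Set$, witnessing that $E$ does not preserve products and so is not continuous; a one-line cardinality computation on the same small examples makes this visible. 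Finally, I would remark that a non-continuous functor cannot be a right adjoint, so $E$ admits no left adjoint, tying off the parenthetical claim foreshadowed in the discussion after Proposition \ref{rightadjoint-E}.

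The main obstacle I anticipate is bookkeeping rather than conceptual: verifying that the color-matching constraint genuinely drops edges in the chosen examples requires carefully tracking the power-set projection condition $\mathcal P(p_\lambda)(A)=\epsilon_{G_\lambda}(r_\lambda(\overrightarrow e))$, since for very small or degenerate hypergraphs (e.g.\ all-empty edges) the constraint can become vacuous and the counterexample collapses. I would therefore choose the witnessing objects so that at least one edge has a nonempty endpoint set and the vertex projections are nontrivial, guaranteeing that the constrained product edge set is strictly smaller than the naive product; pinning down this single cardinality inequality is the crux, and everything else is formal.
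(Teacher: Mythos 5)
Your route to the second assertion works, but your route to the first has a genuine gap: the distributivity test you propose cannot fail, because $\cat{H}$ \emph{is} distributive. The mechanism you describe --- that the coloring coordinate of a product edge ``can pair together edges coming from different summands'' --- does not occur. An edge of $G{\prod}^{\cat{H}}\left(G_{1}{\coprod}^{\cat{H}}G_{2}\right)$ is a pair $\left(S,(g,x)\right)$ in which $x$ is a \emph{single} edge of the coproduct, hence lies in one summand, say $x\in E(G_{1})$; the matching constraint $\mathcal{P}\left(p_{2}\right)(S)=\epsilon_{G_{1}\sqcup G_{2}}(x)=\epsilon_{G_{1}}(x)$ then forces $S\subseteq V(G)\times\epsilon_{G_{1}}(x)\subseteq V(G)\times V(G_{1})$, so $\left(S,(g,x)\right)$ is precisely an edge of $G{\prod}^{\cat{H}}G_{1}$. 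This gives a natural bijection on both vertices and edges, so the canonical morphism $\left(G{\prod}^{\cat{H}}G_{1}\right){\coprod}^{\cat{H}}\left(G{\prod}^{\cat{H}}G_{2}\right)\to G{\prod}^{\cat{H}}\left(G_{1}{\coprod}^{\cat{H}}G_{2}\right)$ is an isomorphism; in your own test case both sides have exactly one edge, so your anticipated counterexample collapses. Since distributivity is only a necessary condition for cartesian closedness and it holds here, no counterexample exists along this road. The actual obstruction lives at coequalizers: the paper coequalizes the two maps $V^{\diamond}(\{0\})\rightrightarrows P_{1}$ selecting the two endpoints of $P_{1}$, obtaining a one-vertex ``loop'' $H$, and then checks that $P_{1}{\prod}^{\cat{H}}H$ has a single edge, while the coequalizer of $P_{1}{\prod}^{\cat{H}}\alpha$ and $P_{1}{\prod}^{\cat{H}}\beta$ identifies only vertices and keeps all the edges of $P_{1}{\prod}^{\cat{H}}P_{1}$; since these disagree, $P_{1}{\prod}^{\cat{H}}-$ fails to preserve a coequalizer and so admits no right adjoint. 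Any correct repair of your first part must use a quotient (coequalizer or pushout) rather than a coproduct.

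Your second part is essentially sound, modulo one imprecision: $E\left(G_{1}{\prod}^{\cat{H}}G_{2}\right)$ is not a subset of $E(G_{1})\times E(G_{2})$ at all; the comparison map $\left(S,(e_{1},e_{2})\right)\mapsto(e_{1},e_{2})$ can fail surjectivity (a $1$-edge paired against a $0$-edge admits no valid coloring $S$ --- this is your example, and it gives $E\left(G{\prod}^{\cat{H}}G_{2}\right)=\emptyset$ against a singleton) or can fail injectivity (in the paper's example $P_{1}{\prod}^{\cat{H}}P_{1}$, the unique pair of edges carries several distinct colorings, so the product has \emph{more} edges than the naive one). Either failure witnesses that $E$ does not preserve binary products, hence is not continuous, and your closing remark that a non-continuous functor admits no left adjoint matches the paper's own observation following Proposition \ref{rightadjoint-E}.
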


\begin{proof}
We construct a counterexample for these conditions similar to \cite[Counterexample 5.1]{brown2008}

Let $P_{1}$ be the path of length 1. There is a unique map $\alpha$ from $V^{\diamond}(\{0\})$ to $P_{1}$ mapping $0$ to $v$, and a unique map $\beta$ from $V^{\diamond}(\{0\})$ to $P_{1}$ mapping $0$ to $w$.  The coequalizer of $\alpha$ and $\beta$ in $\mathfrak{H}$ appears below, quotienting $v$ and $w$ together into the set-system hypergraph $H$.
\[
\includegraphics[scale=1]{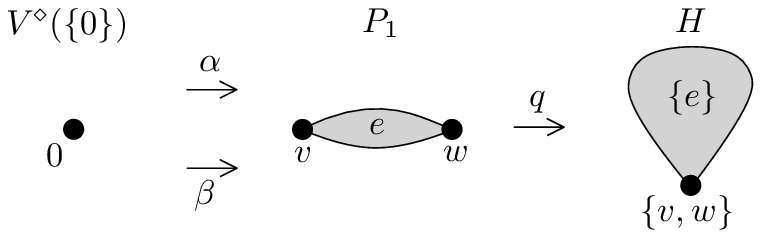}
\]
Applying the functor $P_{1}{\prod}^{\cat{H}}-$ to the diagram above, consider the coequalizer of $P_{1}{\prod}^{\cat{H}}\alpha$ and $P_{1}{\prod}^{\cat{H}}\beta$ in $\cat{H}$. Here, the vertices are quotiented, but the edges are not, giving the set-system hypergraph $K$ below.
\[
\includegraphics[scale=1]{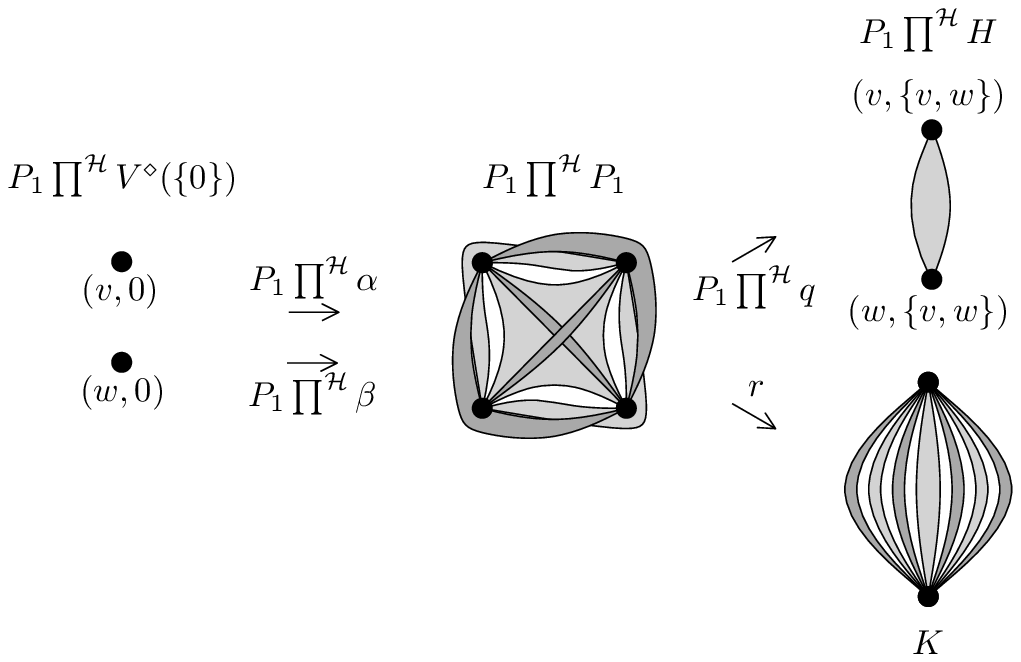}
\]
Observe that $K\not\cong_{\mathfrak{H}}P_{1}{\prod}^{\mathfrak{H}}H$ and $E\left(P_{1}\right){\prod}^{\mathbf{Set}}E\left(P_{1}\right)\not\cong_{\mathbf{Set}}E\left(P_{1}{\prod}^{\mathfrak{H}}P_{1}\right)$.
\qed \end{proof}

Also, $\cat{H}$ has another failing.  Projective objects in $\cat{H}$ are very degenerate, composed only of isolated vertices and 0-edges.

\begin{thm}[Projective objects, $\mathfrak{H}$]\label{projectives-h}
A set-system hypergraph $P$ is projective in $\mathfrak{H}$ if and only if $\epsilon_{P}(e)=\emptyset$ for all $e\in E(P)$.
\end{thm}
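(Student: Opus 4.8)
The plan is to prove both implications using the characterization of epimorphisms in $\cat{H}$ from Proposition \ref{epis-h}: a morphism $\xymatrix{A\ar[r]^{\pi} & B}$ is epic exactly when both $V(\pi)$ and $E(\pi)$ are onto. Throughout I will repeatedly use the elementary fact that for any function $h$ and any set $X$, the image $\mathcal{P}(h)(X)$ is empty if and only if $X$ is empty.

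First I would prove sufficiency $(\Leftarrow)$. Assume $\epsilon_{P}(e)=\emptyset$ for all $e\in E(P)$, and suppose given an epimorphism $\xymatrix{A\ar[r]^{\pi} & B}$ and a morphism $\xymatrix{P\ar[r]^{f} & B}$. Since $V(\pi)$ and $E(\pi)$ are onto, the axiom of choice (i.e.\ projectivity of sets in $\Set$) yields set maps $V(g):V(P)\to V(A)$ and $E(g):E(P)\to E(A)$ with $V(\pi)\circ V(g)=V(f)$ and $E(\pi)\circ E(g)=E(f)$. The only thing to check is that the pair $g:=\left(E(g),V(g)\right)$ is actually a morphism in $\cat{H}$, i.e.\ $\mathcal{P}V(g)\circ\epsilon_{P}=\epsilon_{A}\circ E(g)$. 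The left side is $\mathcal{P}V(g)(\emptyset)=\emptyset$ on every edge; for the right side I would observe that $\mathcal{P}V(\pi)\left(\epsilon_{A}(E(g)(e))\right)=\epsilon_{B}\left(E(\pi)E(g)(e)\right)=\epsilon_{B}\left(E(f)(e)\right)=\mathcal{P}V(f)\left(\epsilon_{P}(e)\right)=\emptyset$, where the first equality uses that $\pi$ is a morphism and the last uses that $f$ is a morphism. By the elementary fact, $\epsilon_{A}(E(g)(e))=\emptyset$, so both sides agree and $g$ is a morphism with $\pi\circ g=f$. Hence $P$ is projective.

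For necessity $(\Rightarrow)$ I would argue by contraposition: if some edge $e_{0}\in E(P)$ has $\epsilon_{P}(e_{0})\neq\emptyset$, I would exhibit an epimorphism onto $P$ admitting no section, which shows $\mathrm{id}_{P}$ does not lift and therefore $P$ is not projective. Fix $v_{0}\in\epsilon_{P}(e_{0})$ and build $A$ by ``doubling'' $v_{0}$: set $V(A):=V(P)\sqcup\{v_{0}'\}$ with a fresh vertex $v_{0}'$, set $E(A):=E(P)$, and define $\epsilon_{A}(e):=\epsilon_{P}(e)\cup\{v_{0}'\}$ when $v_{0}\in\epsilon_{P}(e)$ and $\epsilon_{A}(e):=\epsilon_{P}(e)$ otherwise. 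Let $\pi$ be given by $E(\pi):=\mathrm{id}_{E(P)}$ and $V(\pi)$ the fold map that is the identity on $V(P)$ and sends $v_{0}'\mapsto v_{0}$. A direct check shows $\mathcal{P}V(\pi)\circ\epsilon_{A}=\epsilon_{P}$, so $\pi$ is a morphism, and it is clearly epic. The crucial point is that the unique edge mapping to $e_{0}$ is $e_{0}$ itself, and $\epsilon_{A}(e_{0})=\epsilon_{P}(e_{0})\cup\{v_{0}'\}$ contains both $v_{0}$ and $v_{0}'$, two distinct vertices lying over $v_{0}$.

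The main work, and the step I expect to be the real obstacle, is verifying that $\pi$ has no section, since this is where the failure of $\mathcal{P}$ to lift surjections enters. Any section $s$ would satisfy $E(s)=\mathrm{id}$ (as $E(\pi)=\mathrm{id}$), and $V(s)$ would be forced to be the identity off $v_{0}$ while sending $v_{0}$ to one of $\{v_{0},v_{0}'\}$, because $V(\pi)\circ V(s)=\mathrm{id}$. Consequently $\mathcal{P}V(s)\left(\epsilon_{P}(e_{0})\right)$ contains exactly one of $v_{0},v_{0}'$, whereas the morphism condition demands it equal $\epsilon_{A}(e_{0})$, which contains both. This contradiction shows no section exists, completing the contrapositive and the proof.
\qed
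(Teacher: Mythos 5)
Your proof is correct, but your necessity argument is genuinely different from the paper's. The sufficiency direction is essentially identical to the paper's: both lift vertices and edges through the onto maps of Proposition \ref{epis-h} by choice, and both reduce the morphism check to the observation that a set with empty image under $\mathcal{P}$ of a function must itself be empty (the paper compresses this into ``a routine calculation''; you spell it out, which is fine). For necessity, however, the paper argues by contradiction with a cardinality blow-up: given $v\in\epsilon_{P}(e)$, it builds, for an \emph{arbitrary} set $S$, a hypergraph $G$ with the same edge set as $P$ but with $\{1\}\times S$ adjoined to the endpoint set of $e$, all folded onto $v$; a lift of $id_{P}$ then forces $\card\left(\epsilon_{P}(e)\right)\geq\card(S)$ for every $S$, contradicting Cantor's Theorem. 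You instead double a single vertex $v_{0}\in\epsilon_{P}(e_{0})$ and run a pigeonhole argument: any section has $E(s)=id$, must send $v_{0}$ to exactly one of its two preimages, and hence the image $\mathcal{P}V(s)\left(\epsilon_{P}(e_{0})\right)$ contains exactly one of $v_{0},v_{0}'$, while the morphism condition demands both. Both proofs exploit the same obstruction --- the image of a set under a single-valued function cannot strictly grow to cover the enlarged endpoint set --- but yours localizes it to one element where the paper uses cardinal arithmetic. Your version is more elementary: the counterexample adds one vertex rather than an arbitrary set, and it avoids Cantor's Theorem entirely (which the paper reuses later in the generation theorem for $\cat{H}$, so its appearance there is thematic rather than necessary). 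One small point worth making explicit if you write this up: conclude from $V(\pi)\circ V(s)=id$ that $V(s)(w)=w$ for $w\neq v_{0}$ \emph{because such $w$ has the singleton fiber} $V(\pi)^{-1}(w)=\{w\}$; you state the conclusion correctly but the reason deserves a clause.
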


\begin{proof}

$(\Leftarrow)$ Consider the diagram below in $\cat{H}$, where $\phi$ is epic.
\[\xymatrix{
P\ar[d]_{\psi}\\
H	&	G\ar@{->>}[l]^{\phi}\\
}\]
By Proposition \ref{epis-h}, both $E(\phi$) and $V(\phi)$ are onto.  For each $v\in V(P)$, choose $w_{v}\in V(\phi)^{-1}\left(V(\psi)(v)\right)$, and define $\alpha:V(P)\to V(G)$ by $\alpha(v):=w_{v}$.  For each $e\in E(P)$, choose $f_{e}\in E(\phi)^{-1}\left(E(\psi)(e)\right)$, and define $\beta:E(P)\to E(G)$ by $\beta(e):=f_{e}$.  A routine calculation shows that $\hat{\psi}:=(\beta,\alpha)\in\cat{H}(P,G)$, and $\phi\circ\hat{\psi}=\psi$ by construction.

$(\Rightarrow)$ For purposes of contradiction, assume that there is $e\in E(P)$ and $v\in V(P)$ such that $v\in\epsilon_{P}(e)$.  For any set $S$, let $G$ be the set-system hypergraph constructed in the following way:
\begin{itemize}
\item $E(G):=E(P)$,
\item $V(G):=\left(\{0\}\times V(P)\right)\cup\left(\{1\}\times S\right)$,
\item $\epsilon_{G}(f):=\left\{\begin{array}{cc}\{0\}\times\epsilon_{P}(f), & f\neq e,\\ \left(\{0\}\times\epsilon_{P}(e)\right)\cup\left(\{1\}\times S\right), & f=e.\\\end{array}\right.$
\end{itemize}
Likewise, define $\alpha:V(G)\to V(P)$ by
\[
\alpha(n,w):=\left\{\begin{array}{cc}
w,	&	n=0,\\
v,	&	n=1,\\
\end{array}\right.
\]
and $\beta:=id_{E(P)}$.  A routine calculation shows that $\phi:=(\beta,\alpha)\in\cat{H}(G,P)$.  Both $\alpha$ and $\beta$ are onto, so $\phi$ is epic in $\cat{H}$ by Proposition \ref{epis-h}.  As $P$ is projective with respect to epimorphisms in $\cat{H}$, there is $\xymatrix{P\ar[r]^{\hat{\psi}} & G}\in\cat{H}$ such that $\phi\circ\hat{\psi}=id_{P}$.
\[\xymatrix{
P\ar[d]_{id_{P}}\ar@{..>}[dr]^{\hat{\psi}}\\
P	&	G\ar@{->>}[l]^{\phi}\\
}\]
Notice that
\[
id_{E(P)}
=E\left(id_{P}\right)
=E\left(\phi\circ\hat{\psi}\right)
=E\left(\phi\right)\circ E\left(\hat{\psi}\right)
=\beta\circ E\left(\hat{\psi}\right)
=E\left(\hat{\psi}\right),
\]
so
\[
\mathcal{P}V\left(\hat{\psi}\right)\left(\epsilon_{P}(e)\right)
=\epsilon_{G}\left(E\left(\hat{\psi}\right)(e)\right)
=\epsilon_{G}(e)
=\left(\{0\}\times\epsilon_{P}(e)\right)\cup\left(\{1\}\times S\right).
\]
Hence, $\card\left(\epsilon_{P}(e)\right)\geq\card\left(\epsilon_{P}(e)\right)+\card(S)\geq\card(S)$.  Since $S$ was arbitrary, $\epsilon_{P}(e)$ has larger cardinality than any set, including its own power set.  This contradicts Cantor's Theorem, so $v$ and $e$ cannot have existed.

\qed \end{proof}

Unfortunately, due to this degeneracy, projective covers in $\cat{H}$ rarely exist.  The reason for this behavior is the inability for a 0-edge to be mapped anywhere but to another 0-edge.  Thus, the only objects with a projective cover are themselves projective.

\begin{cor}[Epic images of projectives, $\cat{H}$]
Say $\xymatrix{P\ar@{->>}[r]^{\phi} & G}\in\cat{H}$ is epic, and $P$ is projective in $\cat{H}$. Then, $G$ is also projective in $\cat{H}$.  Consequently, $\cat{H}$ does not have enough projectives.
\end{cor}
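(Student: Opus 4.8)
The plan is to lean entirely on the characterization of projectives from Theorem \ref{projectives-h}, which says that $P$ is projective in $\cat{H}$ precisely when every edge of $P$ is empty, i.e.\ $\epsilon_{P}(e)=\emptyset$ for all $e\in E(P)$. The whole statement then reduces to a short diagram chase followed by a one-line deduction, so I do not expect to construct anything new.

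First I would prove the main assertion: that an epic image of a projective is projective. Suppose $\phi\colon P\twoheadrightarrow G$ is epic in $\cat{H}$ with $P$ projective. By Proposition \ref{epis-h}, both $E(\phi)$ and $V(\phi)$ are onto. Since $\phi$ is a morphism of the comma category $\cat{H}=\left(id_{\Set}\downarrow\mathcal{P}\right)$, it satisfies the naturality square $\epsilon_{G}\circ E(\phi)=\mathcal{P}V(\phi)\circ\epsilon_{P}$. Because $P$ is projective, Theorem \ref{projectives-h} gives $\epsilon_{P}(e)=\emptyset$ for every $e\in E(P)$; feeding this into the square and noting that $\mathcal{P}V(\phi)(\emptyset)=\emptyset$ (the covariant power-set functor sends the empty set to the empty set), I obtain $\epsilon_{G}\left(E(\phi)(e)\right)=\emptyset$ for all $e\in E(P)$. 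As $E(\phi)$ is surjective, every edge $g\in E(G)$ equals $E(\phi)(e)$ for some $e$, whence $\epsilon_{G}(g)=\emptyset$ for all $g\in E(G)$. By the converse direction of Theorem \ref{projectives-h}, $G$ is projective.

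Next I would deduce the failure of enough projectives. Recall that $\cat{H}$ has enough projectives exactly when every object is the epic image of some projective object. If this held, the first part would force \emph{every} object of $\cat{H}$ to be projective. But that is false: any set-system hypergraph possessing a nonempty edge---for instance the path $P_{1}$ used in Lemma \ref{topos-fail}, or $V^{\star}(\{0\})$---violates the condition $\epsilon(e)=\emptyset$ and hence is not projective by Theorem \ref{projectives-h}. This contradiction shows that $\cat{H}$ cannot have enough projectives.

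I expect no serious obstacle, since the heavy lifting is already carried out by Theorem \ref{projectives-h}. The only points requiring care are invoking the comma-category morphism condition in the correct direction and observing that emptiness of edges is transported forward along $\phi$ because $\mathcal{P}V(\phi)(\emptyset)=\emptyset$; surjectivity of $E(\phi)$ then upgrades this from the image to all of $E(G)$. Everything else is formal bookkeeping.
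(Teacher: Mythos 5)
Your proof is correct and follows essentially the same route as the paper's: use Proposition \ref{epis-h} to get surjectivity of $E(\phi)$, push the empty-edge condition of Theorem \ref{projectives-h} through the naturality square $\epsilon_{G}\circ E(\phi)=\mathcal{P}V(\phi)\circ\epsilon_{P}$, and conclude by the same theorem. The only difference is that you spell out the ``consequently'' clause (exhibiting non-projective objects such as $P_{1}$), which the paper leaves implicit; that is a welcome completion rather than a deviation.
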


\begin{proof}

By Proposition \ref{epis-h}, $E(\phi)$ is onto.  Given $e\in E(G)$, there is $f\in E(P)$ such that $E(\phi)(f)=e$.  By Theorem \ref{projectives-h}, $\epsilon_{P}(f)=\emptyset$, so
$
\epsilon_{G}(e)
=\left(\epsilon_{G}\circ E(\phi)\right)(f)
=\left(\mathcal{P}V(\phi)\circ\epsilon_{P}\right)(f)
=\mathcal{P}V(\phi)(\emptyset)
=\emptyset.
$
Hence, $G$ is projective with respect to epimorphisms in $\cat{H}$ by Theorem \ref{projectives-h}.

\qed \end{proof}

Moreover, $\cat{H}$ does not have a set of generators, but rather a proper class indexed by the parent category $\Set$.

\begin{defn}[Generators of $\cat{H}$]
For $S\in\ob(\Set)$, define a set-system hypergraph $G_{S}:=\left(\{1\},\epsilon_{G_{S}},S\right)$, where $\epsilon_{G_{S}}(1):=S$.  Let $\mathscr{G}:=\left\{G_{S}:S\in\ob(\Set)\right\}\cup\left\{V^{\diamond}(\{1\})\right\}$.
\end{defn}

\begin{thm}[Generation of $\cat{H}$]
The class $\mathscr{G}$ is a family of generators for $\cat{H}$.  If $\mathscr{J}$ is a family of generators for $\cat{H}$, then $\mathscr{J}$ is a proper class.  Consequently, $\mathscr{G}$ is a minimal family of generators.
\end{thm}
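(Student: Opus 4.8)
The plan is to prove the three assertions in turn. For the generation claim I would take distinct parallel morphisms $\phi,\psi:G\to H$ in $\cat{H}$ and split into two cases according to which coordinate they disagree in. If $V(\phi)\neq V(\psi)$, pick $v\in V(G)$ with $V(\phi)(v)\neq V(\psi)(v)$; since $V^{\diamond}(\{1\})$ carries a single vertex and no edges, morphisms out of it into $G$ are exactly choices of a vertex of $G$ (the edge coordinate is the empty function and the compatibility square over $\emptyset$ is vacuous), so the map $f$ sending the vertex to $v$ satisfies $\phi\circ f\neq\psi\circ f$. If instead $V(\phi)=V(\psi)$ but $E(\phi)\neq E(\psi)$, pick $e\in E(G)$ with $E(\phi)(e)\neq E(\psi)(e)$, set $S:=\epsilon_{G}(e)$, and define $g:G_{S}\to G$ by sending the unique edge of $G_{S}$ to $e$ and using the inclusion $S\hookrightarrow V(G)$ on vertices. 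The morphism condition holds because $\mathcal{P}V(g)(S)=S=\epsilon_{G}(e)$ (this also covers $S=\emptyset$, a loose $0$-edge), and then $E(\phi\circ g)(1)=E(\phi)(e)\neq E(\psi)(e)=E(\psi\circ g)(1)$. This establishes that $\mathscr{G}$ is generating.

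For the second assertion I would argue by contradiction: suppose $\mathscr{J}$ is a generating family that is merely a set. Then $\left\{\card\left(\epsilon_{J}(j)\right):J\in\mathscr{J},\ j\in E(J)\right\}$ is a set of cardinals and hence has a supremum $\kappa$, a cardinal bounding the endpoint size of every edge occurring in an object of $\mathscr{J}$. Choose a set $S$ with $\card(S)>\kappa$ and build a target $H$ with vertex set $S$ and two parallel edges $e_{1},e_{2}$, both with $\epsilon_{H}(e_{i})=S$; this is legitimate precisely because the comma category $\cat{H}=\left(id_{\Set}\downarrow\mathcal{P}\right)$ permits multiple edges on the same endpoint set, unlike the classical subcategory $\mathbf{H}$. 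Define $\phi,\psi:G_{S}\to H$ to agree on vertices via $id_{S}$ but to send the unique edge of $G_{S}$ to $e_{1}$ and $e_{2}$ respectively, so that $\phi\neq\psi$.

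The crux is to show that no object of $\mathscr{J}$ separates this pair. Since $\phi$ and $\psi$ agree on vertices, $V(\phi\circ f)=V(\psi\circ f)$ for every $f:J\to G_{S}$, so any separating $f$ must produce a discrepancy on an edge, forcing some $j\in E(J)$ with $E(f)(j)$ equal to the unique edge of $G_{S}$. The morphism condition for $f$ then requires $\mathcal{P}V(f)\left(\epsilon_{J}(j)\right)=\epsilon_{G_{S}}(1)=S$, so $V(f)$ restricts to a surjection $\epsilon_{J}(j)\twoheadrightarrow S$, whence $\card\left(\epsilon_{J}(j)\right)\geq\card(S)>\kappa$, contradicting the choice of $\kappa$. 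Thus $\mathscr{J}$ fails to separate $\phi$ and $\psi$, contradicting that it generates, so $\mathscr{J}$ cannot be a set and must be a proper class.

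Minimality is then immediate: $\mathscr{G}$ is a generating family by the first part, and no generating family can be smaller than a proper class by the second, so $\mathscr{G}$ is minimal. I expect the cardinality-surjection step in the third paragraph to be the main obstacle, since it is where one must pin down exactly that detecting a difference on an edge with a large endpoint set requires a generator carrying an edge of at least comparable cardinality; once that is isolated, the unboundedness of $\left\{G_{S}\right\}_{S}$ does the rest, and the remaining work is just routine verification of the comma-category morphism condition $\epsilon_{H}\circ E(\cdot)=\mathcal{P}V(\cdot)\circ\epsilon_{G}$ for the maps constructed above.
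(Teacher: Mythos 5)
Your proof is correct. The generation half is the paper's argument essentially verbatim (vertices separated via maps out of $V^{\diamond}(\{1\})$, edges via $G_{S}$ with $S=\epsilon_{G}(e)$), and the minimality claim is formal in both treatments; the genuine difference lies in how you show a set-sized family $\mathscr{J}$ cannot generate. Both proofs share the same core device --- a codomain containing two parallel edges whose common endpoint set is too large for any object of $\mathscr{J}$ to reach, exploiting that $\cat{H}$-morphisms must map endpoint sets \emph{onto} endpoint sets --- but the bookkeeping differs. The paper manufactures the large set explicitly from $\mathscr{J}$: it sets $S:=\sqcup_{G\in\mathscr{J}}V(G)$, builds $K$ with vertex set $\{(0,0)\}\cup\left(\{1\}\times\mathcal{P}(S)\right)$ and four edges, two of which are parallel copies of the huge edge $\{1\}\times\mathcal{P}(S)$, takes the domain to be the induced subhypergraph $H$ on three of the edges, and kills any would-be separation with Cantor's theorem: the image $\mathcal{P}V(\varphi)\left(\epsilon_{G}(e)\right)$ of an edge of $G\in\mathscr{J}$ has at most $\card\left(V(G)\right)\leq\card(S)<\card\left(\mathcal{P}(S)\right)$ elements, so it can never land on a huge edge. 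You instead take $\kappa$ to be the supremum of the endpoint cardinalities over $\mathscr{J}$ (a set of cardinals, by Replacement), choose any $S$ with $\card(S)>\kappa$, and put the two parallel $S$-edges in the codomain of a pair of morphisms $G_{S}\to H$, so the obstruction is the forced surjection of $\epsilon_{J}(j)$ onto $S$ coming from the comma-category morphism condition. Your version is leaner: the domain of the separating pair is the generator $G_{S}$ itself, and no power set or Cantor's theorem is needed, only the unboundedness of the cardinals; the paper's version is more self-contained in that its large set is produced directly from $\mathscr{J}$ rather than chosen above a supremum. One aside: your parenthetical claim that the classical subcategory $\mathbf{H}$ forbids parallel edges on a common endpoint set is not something the paper asserts (it distinguishes $\mathbf{H}$ from $\cat{H}$ only by the allowance of empty edges), but nothing in your argument depends on that remark.
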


\begin{proof}
Say $\xymatrix{H\ar@/^/[r]^{\phi}\ar@/_/[r]_{\psi} & K}\in\cat{H}$ satisfy that $\phi\circ\varphi=\psi\circ\varphi$ for all $G\in\mathscr{G}$ and $\varphi\in\cat{H}(G,H)$.  If $v\in V(H)$, let $f:\{1\}\to V(H)$ by $f(1):=v$.  By Proposition \ref{adjoints1}, there is a unique $\xymatrix{V^{\diamond}(\{1\})\ar[r]^(0.6){\hat{f}} & H}\in\cat{H}$ such that $V\left(\hat{f}\right)=f$.  By assumption $\phi\circ\hat{f}=\psi\circ\hat{f}$, meaning
$
V(\phi)(v)
=V\left(\phi\circ\hat{f}\right)(1)
=V\left(\psi\circ\hat{f}\right)(1)
=V(\psi)(v).
$
Hence, $V(\phi)=V(\psi)$.  For $e\in E(H)$, let $S:=\epsilon_{H}(e)$ and define $\xymatrix{G_{S}\ar[r]^{\varphi} & H}\in\cat{H}$ by $V(\varphi)(v):=v$ and $E(\varphi)(1):=e$.  By assumption, $\phi\circ\varphi=\psi\circ\varphi$, meaning
$
E(\phi)(e)
=E\left(\phi\circ\varphi\right)(1)
=E\left(\psi\circ\varphi\right)(1)
=E(\psi)(v).
$
Thus, $E(\phi)=E(\psi)$, yielding that $\phi=\psi$.  Therefore, $\mathscr{G}$ is a family of generators for $\cat{H}$.

Let $\mathscr{J}\subseteq\ob(\cat{H})$ satisfy that $\mathscr{J}$ is a set.  Define $S:=\sqcup_{G\in\mathscr{J}}V(G)$ and a set-system hypergraph $K$ by
\begin{itemize}
\item $V(K):=\{(0,0)\}\cup\left(\{1\}\times\mathcal{P}(S)\right)$,
\item $E(K):=\{0,1,2,3\}$,
\item $\epsilon_{K}(n):=\left\{\begin{array}{cc}
\emptyset,	&	n=0,\\
\{(0,0)\},	&	n=1,\\
\{1\}\times\mathcal{P}(S),	&	n=2,3.\\
\end{array}\right.$
\end{itemize}
Let $H$ be the set-system subhypergraph of $K$ induced by edges 0, 1, and 2.  Define $\xymatrix{H\ar@/^/[r]^{\phi}\ar@/_/[r]_{\psi} & K}\in\cat{H}$ by $V(\phi)(v):=V(\psi)(v):=v$, $E(\phi)(n)=n$, and
\[
E(\psi)(n):=\left\{\begin{array}{cc}
0,	&	n=0,\\
1,	&	n=1,\\
3,	&	n=2,\\
2,	&	n=3.\\
\end{array}\right.
\]
If $G\in\mathscr{J}$ and $\xymatrix{G\ar[r]^{\varphi} & H}\in\cat{H}$, then
\[
\card\left(\epsilon_{H}\left(E(\varphi)(e)\right)\right)
=\card\left(\mathcal{P}V(\varphi)\left(\epsilon_{G}(e)\right)\right)
\leq\card\left(V(G)\right)
\leq\card(S)
<\card\left(\mathcal{P}(S)\right)
\]
by Cantor's Theorem.  Thus, $E(\varphi)(e)\in\{0,1\}$, meaning that $\phi\circ\varphi=\psi\circ\varphi$ by a routine calculation.  As $\phi\neq\psi$, $\mathscr{J}$ is not a family of generators for $\cat{H}$.
\qed \end{proof}

\subsection{Category of Multigraphs $\cat{M}$}

\subsubsection{Deletion Functor \& Limits}

This section considers the connection between a category of multigraphs to the larger category of hypergraphs.  Specifically, the following definition is used for a multigraph.

\begin{defn}[{Multigraph, \protect\cite[p.\ 185]{dorfler1980}}]
A \emph{multigraph} $G$ is a set-system hypergraph such that for all $e\in E(G)$, $1\leq\card\left(\epsilon_{G}(e)\right)\leq 2$. Let $\mathfrak{M}$ denote the full subcategory of $\mathfrak{H}$ consisting of multigraphs, and $\xymatrix{\cat{M}\ar[r]^{N} & \cat{H}}$ be the inclusion functor.
\end{defn}

This definition agrees with the definition of a ``graph'' in \cite[p.\ 2]{bondy-murty}. Moreover, the notion of isomorphism in $\mathfrak{M}$ matches \cite[p.\ 12]{bondy-murty} exactly.

There is a natural means to change any set-system hypergraph into a multigraph, by removing all non-traditional edges.  This deletion process constitutes a right adjoint functor to $N$.

\begin{defn}[Deletion functor]
Given a set-system hypergraph $H$, define a set-sytem multigraph $\Del(H):=\left(E\Del(H),\left.\epsilon_{H}\right|_{E\Del(H)},V(H)\right)$, where $E\Del(H):=\left\{e\in E(H):1\leq\card\left(\epsilon_{H}(e)\right)\leq 2\right\}$.  Let $\xymatrix{\Del(H)\ar[r]^(0.65){j_{H}} & H}\in\mathfrak{H}$ be the canonical inclusion homomorphism from $\Del(H)$ into $H$.
\end{defn}

\begin{thm}[Characterization of $\Del$]
If $G$ is a multigraph and $\xymatrix{G\ar[r]^{\phi} & H}\in\mathfrak{H}$, there is a unique $\xymatrix{G\ar[r]^(0.4){\hat{\phi}} & \Del(H)}\in\mathfrak{M}$ such that $j_{H}\circ\hat{\phi}=\phi$.
\end{thm}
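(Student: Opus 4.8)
The plan is to recognize this statement as the universal property exhibiting $\Del$ as right adjoint to $N$, so the only construction to make is the factoring morphism $\hat{\phi}$, obtained by corestricting the edge map of $\phi$ to $E\Del(H)$. Write $\phi=\left(E(\phi),V(\phi)\right)$ as a morphism of the comma category $\left(id_{\Set}\downarrow\mathcal{P}\right)$, so that $\epsilon_{H}\circ E(\phi)=\mathcal{P}V(\phi)\circ\epsilon_{G}$. Since $V\Del(H)=V(H)$ and $j_{H}$ is the identity on vertices and the inclusion $E\Del(H)\hookrightarrow E(H)$ on edges, I expect $\hat{\phi}$ to be forced to have $V(\hat{\phi})=V(\phi)$ and $E(\hat{\phi})$ equal to $E(\phi)$ with codomain cut down to $E\Del(H)$.

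The crux --- and the only step using the multigraph hypothesis on $G$ --- is checking that $E(\phi)$ really does land in $E\Del(H)$. Fix $e\in E(G)$. Because $G$ is a multigraph, $1\leq\card\left(\epsilon_{G}(e)\right)\leq 2$; in particular $\epsilon_{G}(e)\neq\emptyset$. Its image $\mathcal{P}V(\phi)\left(\epsilon_{G}(e)\right)$ is therefore a nonempty set of cardinality at most $2$, i.e.\ $1\leq\card\left(\mathcal{P}V(\phi)\left(\epsilon_{G}(e)\right)\right)\leq 2$. The homomorphism condition rewrites this as $1\leq\card\left(\epsilon_{H}\left(E(\phi)(e)\right)\right)\leq 2$, which is exactly the defining condition for $E(\phi)(e)\in E\Del(H)$. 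Thus $E(\phi)$ corestricts to a function $E(\hat{\phi}):E(G)\to E\Del(H)$, and setting $V(\hat{\phi}):=V(\phi)$ yields a candidate $\hat{\phi}$. I expect this cardinality bookkeeping to be the main point, with the subtlety being to rule out the empty image using $\card\left(\epsilon_{G}(e)\right)\geq 1$ (so that $E(\phi)(e)$ is not an inadmissible $0$-edge of $H$).

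It then remains to verify that $\hat{\phi}$ is a morphism, that it factors $\phi$, and that it is unique. Since $\epsilon_{\Del(H)}$ is the restriction of $\epsilon_{H}$ to $E\Del(H)$, the identity $\epsilon_{\Del(H)}\circ E(\hat{\phi})=\mathcal{P}V(\hat{\phi})\circ\epsilon_{G}$ is precisely the one already established for $\phi$, so $\hat{\phi}\in\mathfrak{M}\left(G,\Del(H)\right)$. The triangle $j_{H}\circ\hat{\phi}=\phi$ is immediate on vertices (both sides are $V(\phi)$) and on edges (post-composing the corestriction with the inclusion $E(j_{H})$ returns $E(\phi)$). For uniqueness, I would observe that $j_{H}$ is monic in $\mathfrak{H}$ by Proposition \ref{monos-h}, since $V(j_{H})=id_{V(H)}$ and $E(j_{H})$ is an inclusion, both one-to-one; hence any $\hat{\phi}'$ with $j_{H}\circ\hat{\phi}'=\phi=j_{H}\circ\hat{\phi}$ must equal $\hat{\phi}$. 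No genuine obstacle is anticipated beyond the factorization step; the rest is routine verification inherited from the structure of $\phi$.
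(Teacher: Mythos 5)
Your proposal is correct and follows the same route as the paper: both use the multigraph bound $1\leq\card\left(\epsilon_{G}(e)\right)\leq 2$ together with the homomorphism identity $\epsilon_{H}\circ E(\phi)=\mathcal{P}V(\phi)\circ\epsilon_{G}$ to show $E(\phi)$ lands in $E\Del(H)$, then define $\hat{\phi}$ as the corestriction paired with $V(\phi)$. Your explicit verification of the factorization and of uniqueness (via $j_{H}$ being monic) is fine; the paper simply leaves those steps as routine.
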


\begin{proof}
Given $e\in E(G)$, then $1\leq\card\left(\epsilon_{G}(e)\right)\leq 2$ and $\left(\epsilon_{H}\circ E(\phi)\right)(e)=\left(\mathcal{P}V(\phi)\circ\epsilon_{G}\right)(e)$, so one has $1\leq\card\left(\left(\epsilon_{H}\circ E(\phi)\right)(e)\right)\leq 2$ also.  Hence, $E(\phi)(e)\in E\Del(H)$.  Define $\beta:=\left.E(\phi)\right|^{E\Del(H)}$ and $\hat{\phi}:=\left(\beta,V(\phi)\right)$.
\qed \end{proof}

One can quickly show that $\mathfrak{M}$ is replete in $\mathfrak{H}$, so $\mathfrak{M}$ is a coreflective subcategory of $\mathfrak{H}$.  As a consequence, the adjoint pair causes $\cat{M}$ to inherit several properties from its parent $\cat{H}$.  In particular, $\cat{M}$ fails to be a topos in the exact same way.

\begin{thm}[Limit properties of $\cat{M}$]
The category $\mathfrak{M}$ is complete, cocomplete, and regular.  Every object of $\cat{M}$ admits a partial morphism representer and an injective envelope.  However, $\cat{M}$ is not cartesian closed.  The inclusion functor $N$ fails to be continuous, but admits a right adjoint functor.  Also, the right adjoint functor $\Del$ is not cocontinuous.
\end{thm}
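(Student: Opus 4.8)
The plan is to let the coreflection $N\dashv\Del$ do essentially all the work: since $N$ is a full, faithful left adjoint, $\cat{M}$ is a coreflective subcategory of $\cat{H}$, so every positive assertion is transported from Theorem \ref{limits-h} through this adjunction, while each of the three negative assertions needs one explicit witness. The right adjoint demanded for $N$ is of course $\Del$ itself, so that clause is immediate.

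For cocompleteness I would first check that $\cat{M}$ is closed under colimits in $\cat{H}$: a coproduct of multigraphs is a multigraph, and a coequalizer only replaces each endpoint set by its image, so the constraint $1\leq\card(\epsilon(e))\leq 2$ survives (the image of a nonempty set of size at most $2$ is again nonempty of size at most $2$). Hence colimits in $\cat{M}$ are computed as in $\cat{H}$, $\cat{M}$ is cocomplete by Theorem \ref{limits-h}, and $N$ preserves them. Completeness is the dual coreflective fact: the limit of a diagram $D$ is $\Del$ applied to $\lim_{\cat{H}}ND$, which exists since $\cat{H}$ is complete and $\Del$ is defined everywhere. Regularity then follows as in Theorem \ref{limits-h}: kernel pairs and their coequalizers exist, and the pullback–stability diagram chase carries over, using that monomorphisms and (regular) epimorphisms in $\cat{M}$ are again exactly the pairs of injections, respectively surjections, in $V$ and $E$ (Propositions \ref{monos-h} and \ref{epis-h}), with pullbacks now formed via $\Del$.

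For the partial morphism representer and injective envelope I would transport the $\cat{H}$-constructions through $\Del$. Concretely, $\Del(\tilde{G})$ is the object obtained from $\tilde{G}$ by discarding every appended edge $(0,S)$ with $\card(S)\notin\{1,2\}$; I would verify it satisfies the universal property of Theorem \ref{partialmorphism-h} relative to monomorphisms in $\cat{M}$, using that pullbacks in $\cat{M}$ are $\Del$ of pullbacks in $\cat{H}$. Likewise the injective envelope is the restricted loading $\Del\left(L_{\cat{H}}(G)\right)$, appending only size-$1$ and size-$2$ edges where none previously existed, with the inclusion again essential by Proposition \ref{ess-mono-h}.

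The negative results each require a witness. That $N$ is not continuous and that $\cat{M}$ is not cartesian closed both stem from the fact that products disagree: for the path $P_{1}$ of length $1$, the $\cat{H}$-product $P_{1}{\prod}^{\cat{H}}P_{1}$ carries edges whose endpoint sets have cardinality $3$ and $4$, whereas $P_{1}{\prod}^{\cat{M}}P_{1}=\Del\left(P_{1}{\prod}^{\cat{H}}P_{1}\right)$ retains only the two size-$2$ edges; thus $N$ fails to preserve this binary product, and repeating the coequalizer argument of Lemma \ref{topos-fail} with products taken in $\cat{M}$ shows $P_{1}{\prod}^{\cat{M}}-$ does not preserve coequalizers, so it has no right adjoint and $\cat{M}$ is not cartesian closed. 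Finally, to see that $\Del$ is not cocontinuous—the step I expect to be the main obstacle, since a right adjoint fails to preserve colimits only generically and one must produce an explicit failure where $\Del$ interacts nontrivially with a collapse—I would take the set-system hypergraph $H$ with vertex set $\{1,2,3\}$ and a single edge $e$ with $\epsilon_{H}(e)=\{1,2,3\}$, together with the two vertex maps $f,g\colon V^{\diamond}(\{0\})\to H$ sending $0$ to $1$ and to $2$ respectively. The coequalizer in $\cat{H}$ identifies $1$ and $2$, collapsing $e$ to a genuine size-$2$ edge, so $\Del\left(\mathrm{coeq}_{\cat{H}}(f,g)\right)$ retains one edge; but $\Del(H)$ has already deleted the oversized edge, so $\mathrm{coeq}_{\cat{M}}(\Del f,\Del g)$ has none. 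The two disagree, witnessing the failure of cocontinuity.
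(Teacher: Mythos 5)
Your proposal is correct and follows essentially the same route as the paper: both exploit the coreflection $N\dashv\Del$ to transport completeness, cocompleteness, and regularity from Theorem \ref{limits-h} (the paper cites the duals of Borceux's results on reflective subcategories, where you verify the coreflective facts by hand), both obtain the partial morphism representer and the injective envelope by applying $\Del$ to the $\cat{H}$-constructions (the paper justifies injectivity abstractly, via the dual of a Hilton--Stammbach result showing $\Del$ preserves injective objects because $N$ preserves monomorphisms, where you propose a direct verification), and both establish non-cartesian-closedness and discontinuity of $N$ by rerunning Lemma \ref{topos-fail} inside $\cat{M}$ with products computed via $\Del$. The only genuine divergence is the last clause: to show $\Del$ is not cocontinuous, the paper observes that $\Del$ fails to preserve \emph{epimorphisms} --- the unique map from the single $4$-edge $E_{4}$ onto the single $1$-edge $E_{1}$ is epic by Proposition \ref{epis-h}, yet $\Del\left(E_{4}\right)$ has no edges with which to cover the edge of $\Del\left(E_{1}\right)=E_{1}$ --- whereas you exhibit a specific coequalizer that $\Del$ fails to preserve: identifying two vertices of a $3$-edge produces a $2$-edge, which survives $\Del$ when the colimit is taken first but is deleted when $\Del$ is applied first. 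Both witnesses are valid; the paper's is slightly more economical, since preservation of epimorphisms is a consequence of cocontinuity and a single epimorphism suffices, while yours tests the definition of cocontinuity directly and makes explicit how $\Del$ interacts with a vertex collapse.
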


\begin{proof}
Applying the duals of \cite[Propositions I.3.5.3-4]{borceux}, $\mathfrak{M}$ is complete and cocomplete.  The same proof as Proposition \ref{monos-h} shows that a monomorphism in $\cat{M}$ corresponds precisely to a monomorphism in $\cat{H}$, so $N$ preserves monomorphisms.  By the dual of \cite[Proposition II.10.2]{hilton-stammbach}, $\Del$ preserves injective objects.  Thus, a check shows that applying $\Del$ to the loading of a multigraph forms the injective envelope in $\cat{M}$.  Using $\Del$ on the pullback diagram for the partial morphism representer for $\cat{H}$ in Theorem \ref{partialmorphism-h} yields the partial morphism representer for $\cat{M}$.  Regularity of $\cat{M}$ follows from the same proof as $\cat{H}$.

On the other hand, applying $\Del$ throughout Lemma \ref{topos-fail} shows that $\cat{M}$ is not cartesian closed and that $N$ is not continuous.

To show that $\Del$ is not cocontinuous, let $E_4$ be a set-system hypergraph with a single 4-edge and $E_1$ a set-system hypergraph with a single 1-edge. There is only one map $\xymatrix{E_4 \ar[r]^{\alpha} & E_1}\in\cat{H}$, mapping all vertices of $E_4$ to the one vertex of $E_1$.
\[
\includegraphics[scale=1]{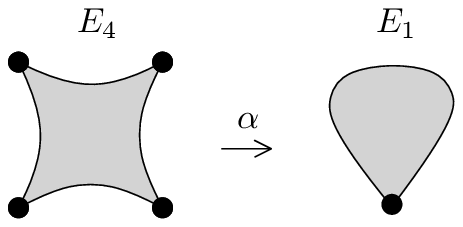}
\]
This map is epic by Proposition \ref{epis-h}, but $\Del\left(E_{4}\right)$ has no edges to map onto the single edge of $\Del\left(E_{1}\right)=E_{1}$.
\[
\includegraphics[scale=1]{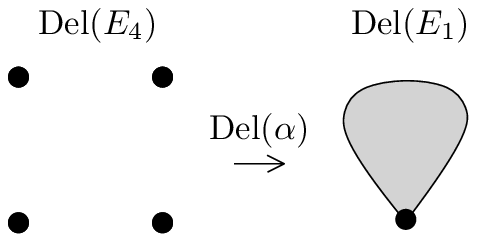}
\]
Consequently, $\Del$ does not preserve epimorphisms.
\qed \end{proof}

\subsubsection{Associated Digraph Functor}

A canonical method of reducing a quiver to a multigraph is removing the order on the endpoint map, described in \cite[p.\ 32]{bondy-murty}.  This action on objects can be extended to homomorphisms, giving a functor between the two categories.  This can also be achieved by quotienting by the symmetry relation as in \cite[p.\ 17]{brown2008}.

\begin{defn}[Underlying multigraph functor]
Given a quiver $Q$, define a set-system multigraph $U(Q):=\left(\overrightarrow{E}(Q),\epsilon_{U(Q)},\overrightarrow{V}(Q)\right)$, where $\epsilon_{U(Q)}(e):=\left\{\sigma_{Q}(e),\tau_{Q}(e)\right\}$.  Given $\xymatrix{Q\ar[r]^{\phi} & R}\in\mathfrak{Q}$, define the homomorphism $U(\phi):=\left(\overrightarrow{E}(\phi),\overrightarrow{V}(\phi)\right)=\phi$. A routine calculation shows that $U$ defines a functor from $\cat{Q}$ to $\cat{M}$.
\end{defn}

Accordingly, an \emph{orientation} of a multigraph $G$ can now be defined functorially as a quiver $Q$ such that $U(Q)=G$. On the other hand, \cite[p.\ 32]{bondy-murty} also describes a means of constructing a quiver from a multigraph by replacing an undirected edge with a pair of directed edges.  This construction creates a right adjoint functor to $U$.

\begin{defn}[Associated digraph functor]
Given a set-system multigraph $G$, define a quiver $\overrightarrow{D}(G):=\left(V(G),\overrightarrow{E}\overrightarrow{D}(G),\sigma_{\overrightarrow{D}(G)},\tau_{\overrightarrow{D}(G)}\right)$, where
\begin{itemize}
\item a 2-edge is replaced with a directed 2-cycle, and a 1-edge with a loop;
\[
\overrightarrow{E}\overrightarrow{D}(G):=\left\{(e,v,w):\epsilon_{G}(e)=\{v,w\},v\neq w\right\}
\cup\left\{(e,v,v):\epsilon_{G}(e)=\{v\}\right\}
\]
\item $\sigma_{\overrightarrow{D}(G)}(e,v,w):=v$, $\tau_{\overrightarrow{D}(G)}(e,v,w):=w$.
\end{itemize}
Likewise, define $\theta_{G}:=\left(E\left(\theta_{G}\right),id_{V(G)}\right)$, where $E\left(\theta_{G}\right)(e,v,w):=e$.  A routine calculation shows that $\theta_{G}$ is a multigraph homomorphism from $U\overrightarrow{D}(G)$ to $G$.
\end{defn}

\begin{thm}[Characterization of $\overrightarrow{D}$]\label{assoc-digraph}
If $\xymatrix{U(Q)\ar[r]^(0.6){\phi} & G}\in\mathfrak{M}$, there is a unique $\xymatrix{Q\ar[r]^(0.4){\hat{\phi}} & \overrightarrow{D}(G)}\in\mathfrak{Q}$ such that $\theta_{G}\circ U\left(\hat{\phi}\right)=\phi$.
\end{thm}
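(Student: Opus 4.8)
The plan is to exhibit $\overrightarrow{D}$ as right adjoint to $U$ by directly writing down the transpose $\hat{\phi}$ and checking its universal property, in exact parallel with the earlier adjoint characterizations. A quiver morphism $\hat{\phi}:Q\to\overrightarrow{D}(G)$ is determined by a vertex component and an edge component, and since $\overrightarrow{V}\left(\overrightarrow{D}(G)\right)=V(G)$, the only possible vertex component compatible with $\theta_{G}$ (whose vertex part is $id_{V(G)}$) is $V(\phi)$ itself. The edge component is then forced by the equation $\theta_{G}\circ U\left(\hat{\phi}\right)=\phi$ together with the quiver axioms: an edge $e$ must record both its image edge $E(\phi)(e)$ and the images of its endpoints. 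Accordingly, I would define $\overrightarrow{V}\left(\hat{\phi}\right):=V(\phi)$ and $\overrightarrow{E}\left(\hat{\phi}\right)(e):=\left(E(\phi)(e),V(\phi)\left(\sigma_{Q}(e)\right),V(\phi)\left(\tau_{Q}(e)\right)\right)$.

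The main obstacle is verifying that this triple actually lies in $\overrightarrow{E}\overrightarrow{D}(G)$, since that set is the disjoint union of a ``genuine edge'' part (distinct endpoints) and a ``loop'' part (coincident endpoints), so one cannot simply assert membership but must check the endpoint data is consistent. The key input is that $\phi$ is a multigraph homomorphism, giving $\epsilon_{G}\left(E(\phi)(e)\right)=\mathcal{P}V(\phi)\left(\epsilon_{U(Q)}(e)\right)=\left\{V(\phi)\left(\sigma_{Q}(e)\right),V(\phi)\left(\tau_{Q}(e)\right)\right\}$. I would then split on whether $V(\phi)\left(\sigma_{Q}(e)\right)$ and $V(\phi)\left(\tau_{Q}(e)\right)$ are distinct: in the unequal case this set has two elements and the triple falls in the first part of the union, while in the equal case the set collapses to a singleton and the triple falls in the loop part. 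Either way the triple is a legitimate edge of $\overrightarrow{D}(G)$, so $\overrightarrow{E}\left(\hat{\phi}\right)$ is well defined.

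Next I would confirm $\hat{\phi}$ is a genuine quiver morphism and satisfies the required equation, both of which are immediate from the definitions: $\sigma_{\overrightarrow{D}(G)}\left(\overrightarrow{E}\left(\hat{\phi}\right)(e)\right)=V(\phi)\left(\sigma_{Q}(e)\right)=\overrightarrow{V}\left(\hat{\phi}\right)\left(\sigma_{Q}(e)\right)$, and dually for $\tau$, so $\hat{\phi}$ commutes with source and target; and reading off components of $\theta_{G}\circ U\left(\hat{\phi}\right)$ gives edge part $e\mapsto E(\theta_{G})\left(E(\phi)(e),\ldots\right)=E(\phi)(e)$ and vertex part $id_{V(G)}\circ V(\phi)=V(\phi)$, so $\theta_{G}\circ U\left(\hat{\phi}\right)=\phi$.

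Finally, for uniqueness, I would suppose $\psi:Q\to\overrightarrow{D}(G)$ also satisfies the equation. Its vertex component is forced to equal $V(\phi)$ because $\theta_{G}$ acts as the identity on vertices. For its edge component, note $\overrightarrow{E}(\psi)(e)$ is a triple whose first coordinate is pinned down by the edge part of $\theta_{G}\circ U(\psi)=\phi$ to be $E(\phi)(e)$, and whose remaining two coordinates are pinned down by the quiver axioms $\sigma_{\overrightarrow{D}(G)}\circ\overrightarrow{E}(\psi)=\overrightarrow{V}(\psi)\circ\sigma_{Q}$ and its target analogue to be $V(\phi)\left(\sigma_{Q}(e)\right)$ and $V(\phi)\left(\tau_{Q}(e)\right)$. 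Hence $\overrightarrow{E}(\psi)=\overrightarrow{E}\left(\hat{\phi}\right)$ and $\psi=\hat{\phi}$, completing the argument.
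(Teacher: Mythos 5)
Your proposal is correct and follows essentially the same route as the paper: the paper's proof consists precisely of the computation $\epsilon_{G}\left(E(\phi)(e)\right)=\mathcal{P}V(\phi)\left(\epsilon_{U(Q)}(e)\right)=\left\{V(\phi)\left(\sigma_{Q}(e)\right),V(\phi)\left(\tau_{Q}(e)\right)\right\}$ to justify membership in $\overrightarrow{E}\overrightarrow{D}(G)$, followed by the same definition $\hat{\phi}=\left(\alpha,V(\phi)\right)$ with $\alpha(e):=\left(E(\phi)(e),V(\phi)\left(\sigma_{Q}(e)\right),V(\phi)\left(\tau_{Q}(e)\right)\right)$. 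Your additional work---the loop/non-loop case split, the source/target compatibility check, and the uniqueness argument via the forced coordinates---is exactly the routine verification the paper leaves implicit.
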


\begin{proof}

For $e\in\overrightarrow{E}(Q)$, one has
$\epsilon_{G}\left(E(\phi)(e)\right)
=\left(\mathcal{P}V(\phi)\circ\epsilon_{U(Q)}\right)(e)
=\left\{\left(V(\phi)\circ\sigma_{Q}\right)(e),\left(V(\phi)\circ\tau_{Q}\right)(e)\right\},
$
meaning $\left(E(\phi)(e),\left(V(\phi)\circ\sigma_{Q}\right)(e),\left(V(\phi)\circ\tau_{Q}\right)(e)\right)\in\overrightarrow{E}\overrightarrow{D}(G)$.  Define $\alpha:\overrightarrow{E}(Q)\to\overrightarrow{E}\overrightarrow{D}(G)$ by $\alpha(e):=\left(E(\phi)(e),\left(V(\phi)\circ\sigma_{Q}\right)(e),\left(V(\phi)\circ\tau_{Q}\right)(e)\right)$ and $\hat{\phi}:=\left(\alpha,V(\phi)\right)$.

\qed \end{proof}

However, while $\overrightarrow{D}$ has often been called the ``equivalent digraph'' operator, $\overrightarrow{D}$ and $U$ do not constitute an equivalence of categories.  This is immediately apparent as $\cat{Q}$ is a topos, and $\cat{M}$ is not.  The following lemma gives precise reasons for this failure -- $U$ does not preserve products, and $\overrightarrow{D}$ does not preserve coequalizers.

\begin{lem}\label{p1xp1}
$U$ is not continuous, and $\overrightarrow{D}$ is not cocontinuous.
\end{lem}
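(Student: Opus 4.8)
The plan is to prove the two non-preservation claims of Lemma \ref{p1xp1} by exhibiting explicit small counterexamples, exactly mirroring the strategy of Lemma \ref{topos-fail}. The two halves are logically independent, so I would treat them separately. For each, the key idea is that $U$ and $\overrightarrow{D}$ each change the \emph{edge} data in a way that does not commute with a universal construction, while leaving the vertex data alone; so I would track the edge sets through a product (for $U$) and a coequalizer (for $\overrightarrow{D}$) and show the cardinalities disagree.

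For the failure of continuity of $U$, I would use the product $P_1 \prod^{\cat{Q}} P_1$ of two directed paths of length $1$ in $\cat{Q}$. Since $\cat{Q} = \Set^{\cat{E}}$ computes products componentwise, $\overrightarrow{E}\left(P_1 \prod^{\cat{Q}} P_1\right) = \overrightarrow{E}(P_1) \times \overrightarrow{E}(P_1)$ has exactly one element, and the product quiver is itself a single directed edge between an ordered pair of vertices. Applying $U$ gives a multigraph with one edge. On the other side, I would compute $U(P_1) \prod^{\cat{M}} U(P_1)$ using the product construction for $\cat{H}$ (restricted to $\cat{M}$, valid since $\cat{M}$ is coreflective and $N$ preserves the relevant limits through $\Del$), where the edge set is cut out by the endpoint-compatibility condition $\mathcal{P}(p_\lambda)(A) = (\epsilon_{G_\lambda} \circ r_\lambda)(\overrightarrow{e})$. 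Because an undirected edge $\{v,w\}$ has no preferred orientation, this condition admits \emph{two} solutions for the paired edge over the diagonal-type vertex configurations, so the two products have different numbers of edges and $U(P_1 \prod^{\cat{Q}} P_1) \not\cong_{\cat{M}} U(P_1) \prod^{\cat{M}} U(P_1)$. Hence $U$ does not preserve this binary product and is not continuous.

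For the failure of cocontinuity of $\overrightarrow{D}$, I would run the $\overrightarrow{D}$ functor through the same coequalizer diagram used in Lemma \ref{topos-fail}: take the path $P_1$ in $\cat{M}$ with two distinct vertices $v,w$ and the two maps $\alpha,\beta$ from the single isolated vertex identifying, respectively, onto $v$ and $w$; their coequalizer in $\cat{M}$ glues $v$ and $w$ together, turning the $2$-edge into a $1$-edge (a loop candidate). Applying $\overrightarrow{D}$ to the coequalized object produces a \emph{single} directed loop, whereas the coequalizer in $\cat{Q}$ of $\overrightarrow{D}(\alpha)$ and $\overrightarrow{D}(\beta)$ acts on $\overrightarrow{D}(P_1)$, which already replaced the one undirected $2$-edge by a directed $2$-cycle (two arcs), and identifying the endpoints yields \emph{two} directed loops. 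Comparing edge cardinalities, $\overrightarrow{D}(\mathrm{coeq}) \not\cong_{\cat{Q}} \mathrm{coeq}(\overrightarrow{D}\alpha, \overrightarrow{D}\beta)$, so $\overrightarrow{D}$ does not preserve this coequalizer and is not cocontinuous.

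The main obstacle I anticipate is the first half: correctly computing the edge set of the product $U(P_1) \prod^{\cat{M}} U(P_1)$ and confirming that the endpoint-compatibility constraint genuinely yields more (or differently structured) edges than the one edge coming from $U$ of the quiver product. This requires care because products in $\cat{M}$ are inherited from $\cat{H}$ via the coreflector $\Del$, and the $\mathcal{P}(p_\lambda)$ condition interacts subtly with the unordered two-element endpoint sets; I would verify the count by hand on the concrete two-vertex, one-edge paths rather than argue abstractly. The coequalizer half is comparatively clean, since the $2$-cycle-versus-loop discrepancy is visible directly from the definition of $\overrightarrow{E}\overrightarrow{D}$, and a simple edge-count (two arcs versus one loop) settles it.
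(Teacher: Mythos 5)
Your proposal is correct and takes essentially the same route as the paper's proof: for $U$, compare $U\left(\overrightarrow{P}_{1}{\prod}^{\cat{Q}}\overrightarrow{P}_{1}\right)$ (one edge) with $U\left(\overrightarrow{P}_{1}\right){\prod}^{\cat{M}}U\left(\overrightarrow{P}_{1}\right)=\Del\left(P_{1}{\prod}^{\cat{H}}P_{1}\right)$ (two edges, the two matchings $\left\{(v,v),(w,w)\right\}$ and $\left\{(v,w),(w,v)\right\}$), and for $\overrightarrow{D}$, run the coequalizer diagram of Lemma \ref{topos-fail}, where the quiver coequalizer has two loops while $\overrightarrow{D}$ of the multigraph coequalizer has one. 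The only slip is your justification that ``$N$ preserves the relevant limits through $\Del$'' --- $N$ is precisely \emph{not} continuous; what makes the computation legitimate is that $\Del$, being a right adjoint, preserves products, which is how the paper (and, in effect, your hand computation) identifies the product in $\cat{M}$ with $\Del$ applied to the product in $\cat{H}$.
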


\begin{proof}
Consider $\overrightarrow{P}_{1}$ the directed path of length 1. By \cite[Proposition I.2.15.1]{borceux}, the product of $\overrightarrow{P}_{1}$ with itself in $\mathfrak{Q}$ is component-wise, yielding four vertices and only one edge. Consequently, $U\left(\overrightarrow{P}_{1}{\prod}^{\mathfrak{Q}}\overrightarrow{P}_{1}\right)$ has very similar structure. However, 
\[
U\left(\overrightarrow{P}_{1}\right){\prod}^{\mathfrak{M}}U\left(\overrightarrow{P}_{1}\right)
=P_{1}{\prod}^{\mathfrak{M}}P_{1}
=\Del\left(P_{1}\right){\prod}^{\mathfrak{M}}\Del\left(P_{1}\right)
=\Del\left(P_{1}{\prod}^{\mathfrak{H}}P_{1}\right), 
\]
which has four vertices and two edges from Lemma \ref{topos-fail}. 
\[
\begin{array}{cc}
U\left(\overrightarrow{P}_{1}{\prod}^{\mathfrak{Q}}\overrightarrow{P}_{1}\right) & U\left(\overrightarrow{P}_{1}\right){\prod}^{\mathfrak{M}}U\left(\overrightarrow{P}_{1}\right) \\ 
\includegraphics[scale=1]{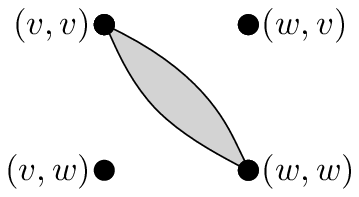} & \includegraphics[scale=1]{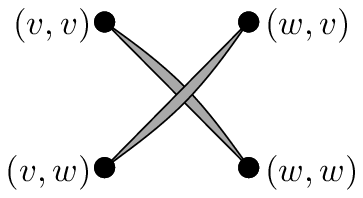} \\ 
\end{array}
\]
Hence, $U\left(\overrightarrow{P}_{1}{\prod}^{\mathfrak{Q}}\overrightarrow{P}_{1}\right)\not\cong_{%
\mathfrak{M}}U\left(\overrightarrow{P}_{1}\right){\prod}^{\mathfrak{M}}U\left(\overrightarrow{P}%
_{1}\right)$.

Let $P_{1}$, $\alpha$, $\beta$, $H$, and $q$ be as in Lemma \ref{topos-fail}. 
Applying the functor $\overrightarrow{D}$ to this diagram, the
coequalizer of $\overrightarrow{D}(\alpha)$ and $\overrightarrow{D}(\beta)$ in $\mathfrak{Q}$
again quotients vertices and not edges, giving the quiver $R$ below.
\[
\includegraphics[scale=1]{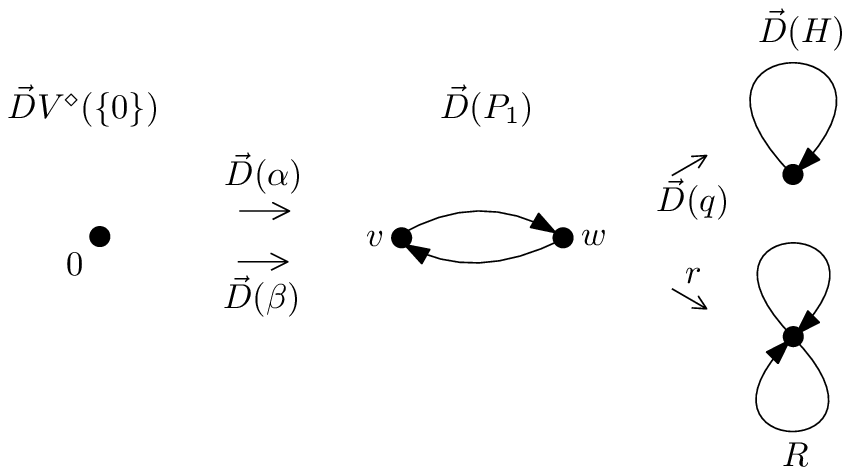}
\]
Note that $R\not\cong_{\mathfrak{Q}}\overrightarrow{D}(H)$.
\qed \end{proof}

For reference, the $\cat{H}$ and $\cat{M}$ functors from the previous two subsections appear in Table \ref{lib3table}.

\begin{table}[H]
\centering
\renewcommand{\arraystretch}{1}
\begin{tabular}{|l|c|l|}
\hline
Functor & $\mathrm{Dom}\rightarrow \mathrm{Codom}$ & Note \\
\hline\hline
$\mathrm{Del}$ & $\cat{H}\rightarrow \cat{M}$ & Deletes all edges except those of size 1 or 2.
\\ 
$N$ & $\cat{M}\rightarrow \cat{H}$ &  Natural inclusion functor of $\cat{M}$ into $\cat{H}$. \\ 
$\overrightarrow{D}$ & $\cat{M}\rightarrow \cat{Q}$ & Associated (``equivalent'') digraph. \\ 
$U$ & $\cat{Q}\rightarrow \cat{M}$ & Underlying multigraph functor. \\ \hline
\end{tabular}
\caption{A library of functors between $\mathfrak{Q}$, $\mathfrak{M}$, and $\mathfrak{H}$}
\label{lib3table}
\end{table}

\subsubsection{Projectivity}

To consider projectivity in $\cat{M}$, one must identify the epimorphisms in $\cat{M}$.  Thankfully, the cocontinuity of $VN$ and $EN$ transfer Proposition \ref{epis-h} to $\cat{M}$.  As such, the proof will be omitted.

\begin{prop}[Epimorphisms, $\cat{M}$]\label{epis-m}
For $\xymatrix{G\ar[r]^{\phi} & H}\in\cat{M}$, the following are equivalent:
\begin{enumerate}
\item $\phi$ is a regular epimorphism;
\item $\phi$ is an epimorphism;
\item both $VN(\phi)$ and $EN(\phi)$ are onto.
\end{enumerate}
\end{prop}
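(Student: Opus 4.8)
The plan is to prove the cycle of implications $(1)\Rightarrow(2)\Rightarrow(3)\Rightarrow(1)$. The first implication is purely formal, since a regular epimorphism is an epimorphism in any category. For $(2)\Rightarrow(3)$, I would exploit the cocontinuity of $VN$ and $EN$ asserted just before the statement: since $N\dashv\Del$, $V\dashv V^{\star}$, and $E\dashv E^{\star}$, the composites $VN$ and $EN$ are left adjoints and therefore preserve epimorphisms. An epimorphism $\phi$ in $\cat{M}$ is thus carried to epimorphisms $VN(\phi)$ and $EN(\phi)$ in $\Set$, which are precisely the onto functions.

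The substance lies in $(3)\Rightarrow(1)$, and here I would use that $\cat{M}$ is regular. Assuming $VN(\phi)$ and $EN(\phi)$ are onto, factor $\phi=m\circ q$ as a regular epimorphism $q$ followed by a monomorphism $m$, the regular-epi/mono factorization available in any regular category. Applying the cocontinuous functors $VN$ and $EN$ sends the coequalizer $q$ to coequalizers in $\Set$, so $VN(q)$ and $EN(q)$ are onto. On the other hand, a monomorphism in $\cat{M}$ is a monomorphism in $\cat{H}$ (as $N$ preserves monomorphisms), whence Proposition \ref{monos-h} forces $VN(m)$ and $EN(m)$ to be one-to-one. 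Combining these facts with $VN(\phi)=VN(m)\circ VN(q)$ and $EN(\phi)=EN(m)\circ EN(q)$ onto shows that $VN(m)$ and $EN(m)$ are simultaneously onto and one-to-one, hence bijective.

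To conclude, I would invoke that the pair $(VN,EN)$ jointly reflects isomorphisms: a multigraph homomorphism whose vertex and edge maps are bijections is an isomorphism, since the incidence identity $\epsilon\circ EN(m)=\mathcal{P}\!\left(VN(m)\right)\circ\epsilon$ transports along the set-theoretic inverses to make $m^{-1}$ a homomorphism. Thus $m$ is an isomorphism in $\cat{M}$, and $\phi=m\circ q$ is the composite of a regular epimorphism with an isomorphism, which is again the coequalizer of the same parallel pair; therefore $\phi$ is a regular epimorphism, closing the cycle.

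The main obstacle is the injectivity half of $(3)\Rightarrow(1)$. Because $EN$ is genuinely not continuous by Lemma \ref{topos-fail}, one cannot argue that $VN$ and $EN$ preserve the kernel pair of $\phi$ and then simply compare coequalizer presentations in $\Set$. Routing the injectivity through the monomorphism $m$ of the regular-epi/mono factorization, and invoking the monomorphism characterization of Proposition \ref{monos-h} transferred along $N$, is what lets the entire argument rest on cocontinuity alone, exactly matching the transfer asserted before the statement.
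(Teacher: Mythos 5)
Your proof is correct, and it is worth noting that the paper offers no written proof of this proposition at all: it is stated with the remark that the cocontinuity of $VN$ and $EN$ transfers Proposition \ref{epis-h} to $\cat{M}$, and the proof is explicitly omitted. Your implications $(1)\Rightarrow(2)\Rightarrow(3)$ are exactly what that remark suggests: since $N\dashv\Del$, $V\dashv V^{\star}$, and $E\dashv E^{\star}$, the composites $VN$ and $EN$ are left adjoints, hence carry epimorphisms to surjections. Where you genuinely diverge is in $(3)\Rightarrow(1)$: the paper's intended route, judging from how Proposition \ref{epis-h} itself is handled (``mirrors the case for both $\cat{Q}$ and $\Set$''), is an elementwise argument exhibiting a componentwise-onto morphism directly as the coequalizer of its kernel pair, whereas you argue abstractly --- factor $\phi=m\circ q$ via the (regular epi, mono) factorization supplied by the regularity of $\cat{M}$ (established earlier in the paper, so there is no circularity), push this into $\Set$ along the cocontinuous $VN$ and $EN$, use the monomorphism characterization transferred along $N$ (also established earlier) to conclude $m$ is componentwise bijective, and then use that $(VN,EN)$ jointly reflect isomorphisms, the inverse-transport step being valid because $\mathcal{P}$ carries bijections to bijections, so $\mathcal{P}\left(VN(m)^{-1}\right)=\mathcal{P}\left(VN(m)\right)^{-1}$; finally, an isomorphism composed with a coequalizer is a coequalizer of the same pair. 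Each step checks out. What your route buys is that it never constructs kernel pairs or coequalizers in $\cat{M}$ explicitly --- a genuine convenience, since limits in $\cat{M}$ are only obtained by applying $\Del$ to limits in $\cat{H}$ and $N$ is not continuous, which is precisely the pitfall you flag in your closing paragraph; what it costs is reliance on the regularity theorem for $\cat{M}$, which the direct elementwise route would not need.
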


Projective objects cannot be obtained through $\cat{H}$.  Not only are projective objects scarce in $\cat{H}$ due to Theorem \ref{projectives-h}, but the deletion functor $\Del$ does not preserve epimorphisms as shown in Lemma \ref{p1xp1}.

On the other hand, projective covers can be constructed through $\cat{Q}$ as shown in \cite[Corollary 4.2.2]{grilliette3}.  To transfer the projective cover from $\cat{Q}$ to $\cat{M}$, observe that the associated digraph functor preserves epimorphisms.

\begin{lem}[$\overrightarrow{D}$ preserves epimorphisms]
Given an epimorphism $\xymatrix{G\ar@{->>}[r]^{\phi} & H}\in\mathfrak{M}$, $%
\overrightarrow{D}(\phi)$ is an epimorphism in $\mathfrak{Q}$.
\end{lem}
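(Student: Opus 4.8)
The statement is that $\overrightarrow{D}$ preserves epimorphisms. By Proposition \ref{epis-m}, an epimorphism $\phi$ in $\cat{M}$ is exactly a pair $\left(EN(\phi),VN(\phi)\right)$ of surjections on edges and vertices. Since the category of quivers $\cat{Q}$ is a presheaf topos with evaluation functors $\overrightarrow{V},\overrightarrow{E}$, an epimorphism in $\cat{Q}$ is precisely a morphism that is surjective on both vertices and edges (epimorphisms in presheaf categories are computed pointwise). Therefore the plan is to reduce the claim to showing that $\overrightarrow{V}\overrightarrow{D}(\phi)$ and $\overrightarrow{E}\overrightarrow{D}(\phi)$ are both onto, and then verify each surjectivity directly from the explicit formula defining $\overrightarrow{D}$.

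The first step is the vertex coordinate, which is immediate: by construction $\overrightarrow{V}\overrightarrow{D}(G)=V(G)$ and $\overrightarrow{V}\overrightarrow{D}(\phi)=V(\phi)=VN(\phi)$, which is onto by hypothesis via Proposition \ref{epis-m}. The second and genuinely content-bearing step is the edge coordinate. Here I would take an arbitrary generator $(e',v',w')\in\overrightarrow{E}\overrightarrow{D}(H)$, so that $\epsilon_{H}(e')=\{v',w'\}$ (with the loop case $v'=w'$ when $\epsilon_H(e')=\{v'\}$). Using that $EN(\phi)$ is onto, choose $e\in E(G)$ with $E(\phi)(e)=e'$. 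Since $\phi$ is a multigraph homomorphism, $\mathcal{P}V(\phi)\left(\epsilon_G(e)\right)=\epsilon_H(e')=\{v',w'\}$, so $V(\phi)$ carries the one or two endpoints of $e$ onto $\{v',w'\}$. I then select a directed edge $(e,v,w)\in\overrightarrow{E}\overrightarrow{D}(G)$ lying over $(e',v',w')$ by choosing preimages $v\in V(\phi)^{-1}(v')$ and $w\in V(\phi)^{-1}(w')$ among the endpoints of $e$, so that $\overrightarrow{E}\overrightarrow{D}(\phi)(e,v,w)=(e',v',w')$ by the definition of $\overrightarrow{D}$ on morphisms (given implicitly through $\theta$ and Theorem \ref{assoc-digraph}).

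The main obstacle is a small but real combinatorial subtlety in this endpoint-matching: one must confirm that the chosen preimages $v,w$ of $v',w'$ can actually be realized as the (possibly coinciding) endpoints of a single edge $e$ in the correct orientation, so that $(e,v,w)$ is a legitimate element of $\overrightarrow{E}\overrightarrow{D}(G)$. The cases to check are whether $\epsilon_G(e)$ is a $1$-set or a $2$-set and whether $v'=w'$ or $v'\neq w'$; since $V(\phi)$ sends $\epsilon_G(e)$ onto $\epsilon_H(e')$, each endpoint $v',w'$ of $e'$ has at least one preimage inside $\epsilon_G(e)$, and these preimages furnish the required directed edge. Once this matching is verified, both coordinates of $\overrightarrow{D}(\phi)$ are onto, so $\overrightarrow{D}(\phi)$ is an epimorphism in $\cat{Q}$, completing the argument. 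The remaining verifications are routine bookkeeping and I would not grind through them in full.
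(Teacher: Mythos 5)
Your proposal is correct and follows essentially the same route as the paper's proof: invoke Proposition \ref{epis-m} to get surjectivity of $VN(\phi)$ and $EN(\phi)$, lift a directed edge $(e,v,w)\in\overrightarrow{E}\overrightarrow{D}(H)$ by first choosing an edge preimage $f$ with $EN(\phi)(f)=e$ and then endpoint preimages inside $\epsilon_{G}(f)$ using $\mathcal{P}VN(\phi)\left(\epsilon_{G}(f)\right)=\{v,w\}$, and conclude componentwise that $\overrightarrow{D}(\phi)$ is epic in $\cat{Q}$. The endpoint-matching subtlety you flag --- that the chosen preimages must constitute a legitimate element of $\overrightarrow{E}\overrightarrow{D}(G)$ --- is passed over silently in the paper's proof, and it resolves exactly by the case check you describe, so your version is if anything slightly more careful.
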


\begin{proof}

By Proposition \ref{epis-m}, both $VN(\phi)$ and $EN(\phi)$ are onto.  Given $(e,v,w)\in\overrightarrow{E}\overrightarrow{D}(H)$, recall that $e\in EN(H)$ and $v,w\in VN(H)$ satisfy $\epsilon_{H}(e)=\{v,w\}$.  There is $f\in EN(G)$ such that $EN(\phi)(f)=e$, which yields
$\mathcal{P}VN(\phi)\left(\epsilon_{G}(f)\right)
=\epsilon_{H}\left(EN(\phi)(f)\right)
=\epsilon_{H}(e)
=\{v,w\}.
$
Thus, there are $x,y\in\epsilon_{G}(f)$ such that $VN(\phi)(x)=v$ and $VN(\phi)(y)=w$.  Consequently,
$
\overrightarrow{E}\overrightarrow{D}(\phi)(f,x,y)
=\left(EN(\phi)(f),VN(\phi)(x),VN(\phi)(y)\right)
=(e,v,w),
$
showing $\overrightarrow{E}\overrightarrow{D}(\phi)$ onto.  As $\overrightarrow{V}\overrightarrow{D}(\phi)=VN(\phi)$ is onto, $\overrightarrow{D}(\phi)$ is epic in $\cat{Q}$.

\qed \end{proof}

Now, one can construct the projective cover of a multigraph in $\cat{M}$ by exploding it in $\cat{Q}$ and then removing the direction with $U$.

\begin{defn}[Explosion of a multigraph]
Given a multigraph $G$, define 
\[
X_{\mathfrak{M}}(G):=U\left(\overrightarrow{V}^{\diamond}\left(\isol(G)\right){\coprod}^{\mathfrak{Q}}\overrightarrow{E}^{\diamond}EN(G)\right). 
\]
By \cite[Proposition 4.1.1]{grilliette3} and \cite[Proposition 10.2]{hilton-stammbach}, $X_{\mathfrak{M}}(G)$ is projective in $\mathfrak{M}$.
\end{defn}

All that remains is to construct a coessential epimorphism from the explosion to cover the original multigraph.  Notably, this map is not unique due to the lack of direction in the edges of a multigraph.

\begin{thm}[Projective cover, $\mathfrak{M}$]
For a set-system multigraph $G$, there is a coessential epimorphism $\xymatrix{X_{\cat{M}}(G)\ar@{->>}[r]^(0.65){p_{G}} & G}\in\mathfrak{M}$.  Consequently, $X_{\mathfrak{M}}(G)$ equipped with $p_{G}$ is a projective cover of $G$ in $\mathfrak{M}$, and $\mathfrak{M}$ has enough projectives.
\end{thm}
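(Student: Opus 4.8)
The plan is to construct $p_{G}$ explicitly and then verify the three defining ingredients of a projective cover in turn: that $p_{G}$ is a $\cat{M}$-morphism, that it is epic, and that it is coessential. Recall that $X_{\cat{M}}(G)$ carries one $2$-edge for each element of $EN(G)$ (transported across by $\overrightarrow{E}^{\diamond}$ and $U$), each such edge equipped with a pair of \emph{distinct} endpoints, together with one isolated vertex for each element of $\isol(G)$. First I would take $E(p_{G})$ to be the identity on $EN(G)$, send the isolated vertex indexed by $w\in\isol(G)$ to $w$, and send the two endpoints of the edge indexed by $e$ onto the one or two elements of $\epsilon_{G}(e)$; when $\epsilon_{G}(e)=\{v,w\}$ with $v\neq w$ one must \emph{choose} which endpoint maps to $v$, and this choice is precisely the source of the non-uniqueness noted above. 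A one-line verification of $\mathcal{P}V(p_{G})\circ\epsilon_{X_{\cat{M}}(G)}=\epsilon_{G}\circ E(p_{G})$ then confirms $p_{G}\in\cat{M}\left(X_{\cat{M}}(G),G\right)$.

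Epicness is immediate from Proposition \ref{epis-m}: $E(p_{G})$ is a bijection, hence onto, while every non-isolated vertex of $G$ lies in some $\epsilon_{G}(e)$ and is therefore hit by an endpoint of the $e$-edge, and every isolated vertex is hit by its corresponding isolated vertex, so $V(p_{G})$ is onto as well.

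The main work, and the main obstacle, is coessentiality, which I would establish directly from its definition rather than quoting the $\cat{H}$-characterization of Proposition \ref{coess-epis-h}. Suppose $\xymatrix{Y\ar[r]^{q} & X_{\cat{M}}(G)}\in\cat{M}$ with $p_{G}\circ q$ epic; I must show $q$ is epic. Since $E(p_{G})$ is a bijection and $E(p_{G})\circ E(q)$ is onto by Proposition \ref{epis-m}, $E(q)$ is onto. For the vertex map, two facts do the work. First, because each edge of $X_{\cat{M}}(G)$ is a genuine $2$-edge with distinct endpoints, surjectivity of $E(q)$ forces both endpoints of every edge into the image of $V(q)$: for each edge $\bar{e}$ there is $f$ with $\bar{e}=E(q)(f)$, and the homomorphism identity gives $\mathcal{P}V(q)\left(\epsilon_{Y}(f)\right)=\epsilon_{X_{\cat{M}}(G)}(\bar{e})$, a two-element set, so both endpoints are hit. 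Second, for an isolated vertex $u$ lying over $w\in\isol(G)$ one checks that $V(p_{G})^{-1}(w)=\{u\}$, since every endpoint vertex maps to a \emph{non}-isolated vertex of $G$; then surjectivity of $V(p_{G})\circ V(q)$ produces a preimage of $w$, which must land on $u$. Together these exhaust the vertices of $X_{\cat{M}}(G)$, so $V(q)$ is onto and $q$ is epic by Proposition \ref{epis-m}.

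Finally, since $X_{\cat{M}}(G)$ is projective and $p_{G}$ is a coessential epimorphism, $\left(X_{\cat{M}}(G),p_{G}\right)$ is by definition a projective cover of $G$; as $G$ was arbitrary, $\cat{M}$ has enough projectives. The delicate point throughout is isolating precisely why the collapse performed by $p_{G}$ is superfluous, namely the conjunction of three features: the edge map is bijective, the explosion edges are honest $2$-edges (forcing endpoint surjectivity), and the isolated vertices have singleton fibres.
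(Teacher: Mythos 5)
Your proposal is correct, and it proves the statement by a route that differs from the paper's in its key step. For the construction itself the difference is cosmetic: the paper manufactures $p_{G}$ categorically, writing $X_{\cat{M}}(G)=U\left(A{\coprod}^{\cat{Q}}B\right)$ with $A:=\overrightarrow{V}^{\diamond}\left(\isol(G)\right)$ and $B:=\overrightarrow{E}^{\diamond}EN(G)$, using Proposition \ref{adjoints1} to produce $\hat{\iota}:A\to\overrightarrow{D}(G)$ and $\hat{\kappa}:B\to\overrightarrow{D}(G)$ (the latter requiring the same appeal to choice you make when deciding which endpoint covers which vertex), gluing them via the universal property of the coproduct into $\lambda$, and setting $p_{G}:=\theta_{G}\circ U(\lambda)$; unraveled, this is exactly your hand-built map. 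The substantive divergence is coessentiality. The paper reads off the three conditions of Proposition \ref{coess-epis-h} from the explicit form of $p_{G}$ (bijective edge map, isolated vertices land on isolated vertices, unique isolated preimages), concludes that $N(p_{G})$ is coessential in $\cat{H}$, and then transfers this back to $\cat{M}$ --- a transfer that silently rests on epimorphisms in both categories being exactly the componentwise surjections (Propositions \ref{epis-h} and \ref{epis-m}). You instead verify the definition of coessentiality directly inside $\cat{M}$, and your three ``features'' (bijectivity of the edge map, honest $2$-edges forcing both endpoints of every edge into the image of $V(q)$, and singleton fibres over isolated vertices) amount to an inlined proof, for this particular morphism, of the sufficiency direction of Proposition \ref{coess-epis-h}. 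The paper's route buys brevity by reusing established machinery; yours buys self-containment, avoiding both the $\cat{H}$-characterization and the $\cat{H}$-to-$\cat{M}$ transfer, and it also makes the epimorphism check explicit via Proposition \ref{epis-m}, a point the paper leaves implicit in its unraveling of the universal maps. Both arguments are complete.
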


\begin{proof}

To ease notation, let $A:=\overrightarrow{V}^{\diamond}\left(\isol(G)\right)$, $B:=\overrightarrow{E}^{\diamond}EN(G)$, and $C:=A{\coprod}^{\cat{Q}}B$.  Also, let $\xymatrix{A\ar[r]^{\varpi_{A}} & C & B\ar[l]_{\varpi_{B}}}\in\cat{Q}$ be the canonical inclusions.  Thus, $X_{\cat{M}}(G)=U(C)$.

Let $\iota:\isol(G)\to\overrightarrow{V}\overrightarrow{D}(G)$ be the canonical inclusion.  By Proposition \ref{adjoints1}, there is a unique $\xymatrix{A\ar[r]^(0.4){\hat{\iota}} & \overrightarrow{D}(G)}\in\cat{Q}$ such that $\overrightarrow{V}\left(\hat{\iota}\right)=\iota$.  By construction, $EN\left(\theta_{G}\right)$ is onto, so there is $f_{e}\in\overrightarrow{E}\overrightarrow{D}(G)$ such that $EN\left(\theta_{G}\right)\left(f_{e}\right)=e$ for each $e\in EN(G)$.  Define $\kappa:EN(G)\to\overrightarrow{E}\overrightarrow{D}(G)$ by $\kappa(e):=f_{e}$.  By Proposition \ref{adjoints1}, there is a unique $\xymatrix{B\ar[r]^(0.4){\hat{\kappa}} & \overrightarrow{D}(G)}\in\cat{Q}$ such that $\overrightarrow{E}\left(\hat{\kappa}\right)=\kappa$.  By the universal property of ${\coprod}^{\cat{Q}}$, there is a unique $\xymatrix{C\ar[r]^(0.4){\lambda} & \overrightarrow{D}(G)}\in\cat{Q}$ such that $\lambda\circ\varpi_{A}=\hat{\iota}$ and $\lambda\circ\varpi_{B}=\hat{\kappa}$.  Thus, define $p_{G}:=\theta_{G}\circ U(\lambda)$.

Now, $p_{G}$ is shown to be a coessential epimorphism.  Observe that
\begin{itemize}
\item $EN\left(X_{\cat{M}}(G)\right)=\{2\}\times EN(G)$,
\item $VN\left(X_{\cat{M}}(G)\right)=\left(\{1\}\times\isol(G)\right)\cup\left(\{2\}\times\{0,1\}\times EN(G)\right)$,
\item $\epsilon_{X_{\cat{M}}(G)}(2,e)=\left\{(2,0,e),(2,1,e)\right\}$ for all $e\in EN(G)$.
\end{itemize}
Unraveling the universal maps above, one has that $VN\left(p_{G}\right)(1,v)=v$ and $EN\left(p_{G}\right)(2,e)=e$ for $v\in\isol(G)$ and $e\in EN(G)$.  Consequently, $N\left(p_{G}\right)$ satisfies the conditions of Theorem \ref{coess-epis-h}, so $N\left(p_{G}\right)=p_{G}$ is a coessential epimorphism in $\cat{H}$ and, therefore, also coessential in $\cat{M}$.

\qed \end{proof}

As a projective object is isomorphic to its projective cover, the projective objects of $\cat{M}$ are completely characterized.

\begin{cor}[Projective objects, $\mathfrak{M}$]
A multigraph $P$ is projective in $\mathfrak{M}$ if and only if $P\cong_{\mathfrak{M}}U\left(\overrightarrow{V}^{\diamond}(S){\coprod}^{\mathfrak{Q}}\overrightarrow{E}^{\diamond}(T)\right)$ for some $S,T\in\ob(\mathbf{Set})$.
\end{cor}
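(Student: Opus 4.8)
The plan is to prove the two implications separately, leaning on the projective-cover machinery just established. The reverse implication is essentially a restatement of the Definition of the explosion, while the forward implication rests on the fact that a projective object is its own projective cover, which I will extract from the coessentiality of $p_{P}$ together with the mono/epi characterizations in $\cat{M}$.

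For the direction ($\Leftarrow$), suppose $P\cong_{\cat{M}}U\left(\overrightarrow{V}^{\diamond}(S)\coprod^{\cat{Q}}\overrightarrow{E}^{\diamond}(T)\right)$ for some sets $S,T$. This is exactly the shape of the explosion $X_{\cat{M}}(G)$ (the case $S=\isol(G)$, $T=EN(G)$), and the argument given in the Definition of the explosion applies verbatim with $S,T$ arbitrary: each of $\overrightarrow{V}^{\diamond}(S)$ and $\overrightarrow{E}^{\diamond}(T)$ is projective in $\cat{Q}$, being a left adjoint of an evaluation functor applied to a set (every set being projective, and the evaluation functors $\overrightarrow{V},\overrightarrow{E}$ preserving epimorphisms, as these are pointwise in a presheaf topos); their coproduct is therefore projective; and $U$ carries projectives to projectives since it is left adjoint to $\overrightarrow{D}$, which preserves epimorphisms by the Lemma that $\overrightarrow{D}$ preserves epimorphisms. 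As projectivity is invariant under isomorphism, $P$ is projective in $\cat{M}$.

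For the direction ($\Rightarrow$), suppose $P$ is projective. The Projective Cover theorem supplies a coessential epimorphism $\xymatrix{X_{\cat{M}}(P)\ar@{->>}[r]^(0.6){p_{P}} & P}$ with $X_{\cat{M}}(P)=U\left(\overrightarrow{V}^{\diamond}(\isol(P))\coprod^{\cat{Q}}\overrightarrow{E}^{\diamond}(EN(P))\right)$. Since $P$ is projective and $p_{P}$ is epic, $p_{P}$ splits: there is $\xymatrix{P\ar[r]^(0.4){s} & X_{\cat{M}}(P)}\in\cat{M}$ with $p_{P}\circ s=id_{P}$. As a section, $s$ is monic, so both $V(s)$ and $E(s)$ are one-to-one by the analogue of Proposition \ref{monos-h} for $\cat{M}$; and since $p_{P}\circ s=id_{P}$ is epic while $p_{P}$ is coessential, $s$ is epic, so $V(s)$ and $E(s)$ are onto by Proposition \ref{epis-m}. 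Hence $V(s)$ and $E(s)$ are bijections, their componentwise inverses assemble into an $\cat{M}$-homomorphism, and $s$ is an isomorphism, giving $P\cong_{\cat{M}}X_{\cat{M}}(P)$. Taking $S:=\isol(P)$ and $T:=EN(P)$ yields the required form.

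The main obstacle is the forward direction's claim that the split section $s$ is in fact an isomorphism: this is precisely where coessentiality of $p_{P}$ is indispensable, since it upgrades the section $s$ from a mere split mono to an epimorphism, after which the bimorphism-implies-isomorphism behavior in $\cat{M}$ finishes the argument. One may instead bypass the splitting computation by invoking the general uniqueness of projective covers up to isomorphism: the identity $id_{P}$ is trivially a coessential epimorphism out of the projective object $P$, hence a projective cover of $P$, and any two projective covers of $P$ are isomorphic, forcing $P\cong_{\cat{M}}X_{\cat{M}}(P)$ directly.
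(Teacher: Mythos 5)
Your proof is correct and follows essentially the same route as the paper, which disposes of this corollary with the single remark that a projective object is isomorphic to its projective cover: your forward direction (splitting $p_{P}$ via projectivity, then using coessentiality to upgrade the section to an epimorphism, hence an isomorphism) is exactly the standard proof of that remark, and your reverse direction fills in the same citations the paper uses when asserting projectivity of the explosion, correctly observing that the argument is insensitive to the particular choice of $S$ and $T$.
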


The projectivity connection to $\cat{Q}$ yields yet another consequence.  As with $\cat{Q}$, the isolated vertex $U\overrightarrow{V}^{\diamond}(\{1\})$ and the path of length 1 $U\overrightarrow{E}^{\diamond}(\{1\})$ serve to generate the entire category $\cat{M}$.  A similar proof to Theorem \ref{generation-Q} gives the following minimality result.

\begin{thm}[Generation of $\cat{M}$]
Any set of generators for $\cat{M}$ must have at least two non-isomorphic objects.
\end{thm}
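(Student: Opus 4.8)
The plan is to transplant the proof of Theorem~\ref{generation-Q} into $\cat{M}$, replacing the quiver functors $\overrightarrow{V}^{\diamond}$ and $\overrightarrow{E}^{\star}$ by their multigraph composites $U\overrightarrow{V}^{\diamond}$ and $U\overrightarrow{E}^{\star}$. First I would record the two facts that make these composites suitable. On one hand, $U\overrightarrow{V}^{\diamond}(X)=\left(\emptyset,\epsilon,X\right)$ is an edgeless multigraph, and $VN\left(U\overrightarrow{V}^{\diamond}(\phi)\right)=\phi$, so $U\overrightarrow{V}^{\diamond}$ is faithful. On the other hand, $\overrightarrow{E}^{\star}(X)=\left(\{1\},X,\mathbf{1}_{X},\mathbf{1}_{X}\right)$ is a bouquet of directed loops, so each edge of $U\overrightarrow{E}^{\star}(X)$ is a loop, i.e.\ a $1$-edge; hence $U\overrightarrow{E}^{\star}(X)$ is a genuine object of $\cat{M}$. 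This is the one point deserving care, since the $\cat{H}$-functor $E^{\star}$ does \emph{not} restrict to $\cat{M}$ (it produces $0$-edges), whereas the fresh composite $U\overrightarrow{E}^{\star}$ does land in $\cat{M}$. Moreover $EN\left(U\overrightarrow{E}^{\star}(\phi)\right)=\phi$, so $U\overrightarrow{E}^{\star}$ is faithful, and crucially $VN\left(U\overrightarrow{E}^{\star}(\psi)\right)=id_{\{1\}}$ for every $\psi$, so all such morphisms agree on vertices and differ only on edges.

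With these in hand, let $\mathscr{J}$ be any generating family for $\cat{M}$ and set $f,g:\{0\}\to\{0,1\}$ with $f(0):=0$ and $g(0):=1$. Since $U\overrightarrow{V}^{\diamond}(f)\neq U\overrightarrow{V}^{\diamond}(g)$, the generating property yields $G_{1}\in\mathscr{J}$ and $\varphi_{1}:G_{1}\to U\overrightarrow{V}^{\diamond}(\{0\})$ with $U\overrightarrow{V}^{\diamond}(f)\circ\varphi_{1}\neq U\overrightarrow{V}^{\diamond}(g)\circ\varphi_{1}$; because $U\overrightarrow{V}^{\diamond}(\{0\})$ is edgeless, $EN(\varphi_{1})$ maps into $\emptyset$, forcing $EN(G_{1})=\emptyset$. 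Symmetrically, since $U\overrightarrow{E}^{\star}(f)\neq U\overrightarrow{E}^{\star}(g)$, there are $G_{2}\in\mathscr{J}$ and $\varphi_{2}:G_{2}\to U\overrightarrow{E}^{\star}(\{0\})$ separating them; because the two maps agree on the single vertex, the discrepancy must live on edges, giving $f\circ EN(\varphi_{2})\neq g\circ EN(\varphi_{2})$ and hence $EN(G_{2})\neq\emptyset$.

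Finally I would conclude exactly as in Theorem~\ref{generation-Q}: one has $\card\left(EN(G_{2})\right)\geq 1>0=\card\left(EN(G_{1})\right)$, so $G_{1}\not\cong_{\cat{M}}G_{2}$, and therefore $\mathscr{J}$ contains at least two non-isomorphic objects. I expect no serious obstacle; the only delicate step is verifying that $U\overrightarrow{E}^{\star}$ indeed takes values in $\cat{M}$ (loops being admissible $1$-edges) while retaining the vertex-agreement property $VN\left(U\overrightarrow{E}^{\star}(\psi)\right)=id_{\{1\}}$, as it is precisely this property that powers the edge-detection step and compensates for the failure of $E^{\star}$ to restrict to multigraphs.
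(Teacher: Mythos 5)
Your proposal is correct and is precisely the argument the paper intends: the paper gives no explicit proof, stating only that ``a similar proof to Theorem~\ref{generation-Q}'' applies, and your transplant via $U\overrightarrow{V}^{\diamond}$ and $U\overrightarrow{E}^{\star}$ is that adaptation carried out carefully. Your observation that $U\overrightarrow{E}^{\star}$ lands in $\cat{M}$ (loops are $1$-edges) while still satisfying $VN\left(U\overrightarrow{E}^{\star}(\psi)\right)=id_{\{1\}}$ is exactly the point that makes the quiver argument go through.
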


\section{Category of Incidence Hypergraphs}\label{catR}

\subsection{Functor Category Representation}
Let $\cat{D}$ be the finite category drawn below.
\[\xymatrix{
0	&	2\ar[l]_{y}\ar[r]^{z}	&	1\\
}\]
Defining $\cat{R}:=\Set^{\cat{D}}$, let $\xymatrix{\Set & \cat{R}\ar@/_/[l]_(0.4){\check{V}}\ar@/^/[l]^(0.4){\check{E}}\ar[r]^(0.4){I} & \Set}$ be the evaluation functors at $0$, $1$, and $2$, respectively.  An object $G$ of $\cat{R}$ consists of the following:  a set $I(G)$, a set $\check{V}(G)$, a set $\check{E}(G)$, a function $\varsigma_{G}:I(G)\to\check{V}(G)$, and a function $\omega_{G}:I(G)\to\check{E}(G)$.  At this point, some terminology is in order.

\begin{defn}[Graph terms]
An \emph{incidence hypergraph} is an object $G=\left(\check{V}(G),\check{E}(G),I(G),\varsigma_{G},\omega_{G}\right)$ of $\cat{R}$.
\begin{itemize}
\item The set $I(G)$ is the \emph{incidence set} of $G$, and an element $i\in I(G)$ is an \emph{incidence} of $G$.
\item The set $\check{E}(G)$ is the \emph{edge set} of $G$, and an element $e\in\check{E}(G)$ is an \emph{edge} of $G$.
\item The set $\check{V}(G)$ is the \emph{vertex set} of $G$, and an element $v\in\check{V}(G)$ is a \emph{vertex} of $G$.
\item The function $\varsigma_{G}$ is the \emph{port function} of $G$.
\item The function $\omega_{G}$ is the \emph{attachment function} of $G$.
\end{itemize}
If $\varsigma_{G}(i)=v$ and $\omega_{G}(i)=e$, then $i$ is a \emph{port} of $v$ and an \emph{attachment} of $e$, while $e$ is \emph{incident} to $v$.  A vertex $v$ is \emph{isolated} if $\varsigma_{G}^{-1}(v)=\emptyset$.  An edge $e$ is \emph{loose} if $\omega_{G}^{-1}(e)=\emptyset$.
\end{defn}

Constructed as a functor category into $\Set$, $\cat{R}$ inherits the same deep and rich internal structure from its parent category as $\cat{Q}$ did.

Again, note that the evaluation functors $\check{V},\check{E},I:\Set^{\cat{D}}\to\Set^{\cat{1}}$ correspond to functors from $\cat{1}$ to $\cat{D}$, mapping the one object of $\cat{1}$ to either $0$, $1$, or $2$ in $\cat{D}$.  Thus, $\check{V}$ admits a right adjoint $\check{V}^{\star}:\Set\to\cat{R}$ and a left adjoint $\check{V}^{\diamond}:\Set\to\cat{R}$ with the following action on objects:
\begin{itemize}
\item $\check{V}^{\star}(X)=\left(X,\{1\},X\times\{1\},\pi_{1},\pi_{2}\right)$, where $\pi_{1},\pi_{2}$ are the canonical projections;
\item $\check{V}^{\diamond}(X)=\left(X,\emptyset,\emptyset,\mathbf{0}_{X},id_{\emptyset}\right)$, where $\mathbf{0}_{X}$ is the empty function.
\end{itemize}
Likewise, $\check{E}$ admits a right adjoint $\check{E}^{\star}:\mathbf{Set}\to\cat{R}$ and a left adjoint $\check{E}^{\diamond}:\mathbf{Set}\to\cat{R}$ with the following actions on objects:
\begin{itemize}
\item $\check{E}^{\star}(X)=\left(\{1\},X,\{1\}\times X,\pi_{1},\pi_{2}\right)$, where $\pi_{1},\pi_{2}$ are the canonical projections;
\item $\check{E}^{\diamond}(X)=\left(\emptyset,X,\emptyset,id_{\emptyset},\mathbf{0}_{X}\right)$, where $\mathbf{0}_{X}$ is the empty function.
\end{itemize}
As can be seen in simple examples, the adjoints of $\check{V}$ and $\check{E}$ encode the following canonical examples:  the edge incident to $\card(X)$ vertices, the (undirected) isolated set of vertices, the bouquet of 1-edges at a single vertex, and the loose set of 0-edges.  These examples show a notable symmetry between vertices and edges in $\cat{R}$.

However, the adjoints of $I$ demonstrate behavior unlike that found in $\cat{H}$.  The right adjoint $I^{\star}:\mathbf{Set}\to\cat{R}$ and a left adjoint $I^{\diamond}:\mathbf{Set}\to\cat{R}$ have the following actions on objects:
\begin{itemize}
\item $I^{\star}(X)=\left(\{1\},\{1\},X,\mathbf{1}_{X},\mathbf{1}_{X}\right)$, where $\mathbf{1}_{X}$ is the constant function;
\item $I^{\diamond}(X)=\left(X,X,X,id_{X},id_{X}\right)$.
\end{itemize}
As can be seen in simple examples, $I^{\star}(X)$ has a single vertex and single edge with $\card(X)$ incidences attaching them, while $I^{\diamond}(X)$ consists of disjoint copies of a 1-edge.  That is, a vertex and an edge may be incident more than once.  In networking terms, this would model a single computer with multiple ports open to the same network.  The natural $\cat{R}$ functors appear in Table \ref{lib4table}.

\begin{table}[h]
\centering
\renewcommand{\arraystretch}{1}
\begin{tabular}{|l|c|l|}
\hline
Functor & $\mathrm{Dom}\rightarrow \mathrm{Codom}$ & Note \\
\hline
${I}$ & $\cat{R} \rightarrow \mathbf{Set}$ & The set of incidences. \\ 
${I}^{\star }$ & $\mathbf{Set}\rightarrow \cat{R}$ & Bouquet of incidences between $1$ vertex and $1$ edge. \\ 
${I}^{\diamond }$ & $\mathbf{Set}\rightarrow \cat{R}$ & Disjoint copies of $1$-edges (non-trivial generators). \\
$\check{E}$ & $\cat{R} \rightarrow \mathbf{Set}$ & The set of edges. \\ 
$\check{E}^{\star }$ & $\mathbf{Set}\rightarrow \cat{R}$ & Bouquet of $1$-edges at a single vertex. \\ 
$\check{E}^{\diamond }$ & $\mathbf{Set}\rightarrow \cat{R}$ & Loose $0$-edges. \\
$\check{V}$ & $\cat{R}\rightarrow \mathbf{Set}$ & The set of vertices. \\ 
$\check{V}^{\star }$ & $\mathbf{Set}\rightarrow \cat{R}$ & Bouquet of $1$-vertices on a single edge. \\ 
$\check{V}^{\diamond }$ & $\mathbf{Set}\rightarrow \cat{R}$ & Isolated set of vertices. \\ \hline
\end{tabular}
\caption{A library of functors for $\mathfrak{R}$}
\label{lib4table}
\end{table}

Lastly, \cite[Theorem I.2.15.6]{borceux} states that every object in $\cat{R}$ can be realized as a quotient of a coproduct of the following three objects:  the isolated vertex $\check{V}^{\diamond}\left(\{1\}\right)$, the loose edge $\check{E}^{\diamond}\left(\{1\}\right)$, and the 1-edge $I^{\diamond}\left(\{1\}\right)$.  Proof similar to Theorem \ref{generation-Q} shows that this generation set is minimal.

\begin{thm}[Generation of $\cat{R}$]\label{generation-R}
Any set of generators for $\cat{R}$ must have at least three non-isomorphic objects.
\end{thm}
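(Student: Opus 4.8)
The plan is to mirror the proof of Theorem \ref{generation-Q}, exploiting the three evaluation functors $\check{V},\check{E},I:\cat{R}\to\Set$ together with their adjoints to manufacture three generators that are forced to be pairwise non-isomorphic. Recall that a family $\mathscr{J}$ of generators has the separating property: for any pair of distinct parallel morphisms there is some $G\in\mathscr{J}$ and a morphism out of $G$ into the common domain that distinguishes them. As in the quiver case, I would feed this property the parallel pairs obtained by applying the adjoint functors to the two maps $f,g:\{0\}\to\{0,1\}$ defined by $f(0):=0$ and $g(0):=1$.

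First I would detect an incidence. Since $I(I^{\star}(f))=f\neq g=I(I^{\star}(g))$, the morphisms $I^{\star}(f)$ and $I^{\star}(g)$ are distinct, so there are $G_{1}\in\mathscr{J}$ and $\varphi_{1}:G_{1}\to I^{\star}(\{0\})$ with $I^{\star}(f)\circ\varphi_{1}\neq I^{\star}(g)\circ\varphi_{1}$. The key point is that the vertex and edge components of $I^{\star}(f)$ and $I^{\star}(g)$ are both the identity on $\{1\}$, so the two composites can only disagree in the incidence coordinate; applying $I$ gives $f\circ I(\varphi_{1})\neq g\circ I(\varphi_{1})$, which forces $I(G_{1})\neq\emptyset$.

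Next I would detect an edge and a vertex in an analogous but ``degenerate'' fashion. Using $\check{E}^{\diamond}(f)\neq\check{E}^{\diamond}(g)$ produces $G_{2}\in\mathscr{J}$ and $\varphi_{2}:G_{2}\to\check{E}^{\diamond}(\{0\})$ separating them; since $\check{E}^{\diamond}(\{0\})=(\emptyset,\{0\},\emptyset,\ldots)$ has empty vertex and incidence sets, functoriality of $\check{V}$ and $I$ (a function into $\emptyset$ forces an empty domain) gives $\check{V}(G_{2})=I(G_{2})=\emptyset$, while the separation forces $\check{E}(G_{2})\neq\emptyset$. Symmetrically, $\check{V}^{\diamond}(f)\neq\check{V}^{\diamond}(g)$ and $\check{V}^{\diamond}(\{0\})=(\{0\},\emptyset,\emptyset,\ldots)$ yield $G_{3}\in\mathscr{J}$ with $\check{E}(G_{3})=I(G_{3})=\emptyset$ and $\check{V}(G_{3})\neq\emptyset$.

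Finally, pairwise non-isomorphism follows because functors preserve isomorphisms: $I(G_{1})\neq\emptyset$ while $I(G_{2})=I(G_{3})=\emptyset$ gives $G_{1}\not\cong_{\cat{R}}G_{2}$ and $G_{1}\not\cong_{\cat{R}}G_{3}$, and $\check{E}(G_{2})\neq\emptyset=\check{E}(G_{3})$ gives $G_{2}\not\cong_{\cat{R}}G_{3}$. Hence $\mathscr{J}$ contains at least three non-isomorphic objects. I expect the only subtlety -- the main obstacle -- to be the incidence step: one must guarantee that the separating discrepancy lands in the incidence coordinate rather than in the vertex or edge coordinate. This is precisely why $I^{\star}$ is the right tool, since its vertex and edge components are identities, rather than $I^{\diamond}$, whose three components all vary and would leave the location of the discrepancy ambiguous.
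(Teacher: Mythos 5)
Your proof is correct and takes essentially the same approach as the paper: the paper omits the argument, stating only that a ``proof similar to Theorem \ref{generation-Q}'' establishes the result, and your writeup is exactly that adaptation---feeding the separating property the pairs obtained from $f,g\colon\{0\}\to\{0,1\}$ via $I^{\star}$, $\check{E}^{\diamond}$, and $\check{V}^{\diamond}$, with the identity vertex/edge components of $I^{\star}$ playing the same role that the identity vertex component of $\overrightarrow{E}^{\star}$ plays in the quiver case.
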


Moreover, one can mimic \cite[Definitions 3.3.3 \& 4.1]{grilliette3} to construct an injective envelope and projective cover of an incidence hypergraph.  The constructions and proofs are nearly identical to the quiver case.

\subsection{Comma Category Representation}

Alternatively, $\cat{R}$ can be constructed via a comma category much like $\cat{Q}$.  An object $G$ of $\left(id_{\mathbf{Set}}\downarrow \Delta^{\star}\right)$ consists of the following:  a set $I(G)$, a set $\check{V}(G)$, a set $\check{E}(G)$, a function $\iota_{G}:I(G)\to\check{V}(G)\times\check{E}(G)$. This object is precisely a ``oriented hypergraph'' as described in \cite[p.\ 2]{Chen2017} without the orientation function and without restriction on isolated vertices or 0-edges. Moreover, the notion of homomorphism in this comma category matches precisely the notion of ``incidence-preserving map'' in \cite[p.\ 4]{Chen2017}.

The proof that $\left(id_{\mathbf{Set}}\downarrow \Delta^{\star}\right)$ is isomorphic to $\Set^{\cat{D}}$ follows from the universal property of the product in $\Set$ in the diagram below.
\[\xymatrix{
&	&	I(G)\ar[dll]_{\varsigma_{G}}\ar[drr]^{\omega_{G}}\ar@{..>}[d]^{\exists!\iota_{G}}\\
\check{V}(G)	&	&	\check{V}(G)\times\check{E}(G)\ar[ll]^{\pi_{1}}\ar[rr]_{\pi_{2}}	&	&	\check{E}(G)\\
}\]

\subsection{Functorial Relationship to $\cat{H}$}

Comparing the incidence hypergraphs of $\cat{R}$ to the set-system hypergraphs of $\cat{H}$, the objects appear very similar at first, with the notable exception of the set of incidences.  However, the two categories are distinct as $\cat{R}$ is cartesian closed by \cite[Proposition A1.5.5]{elephant1}, while $\cat{H}$ is not by Lemma \ref{topos-fail}.  Thus, there is no possibility of an equivalence between these two categories.

That said, one can define an obvious functor between them.  Traditionally, a vertex $v$ and an edge $e$ are ``incident'' if $v\in\epsilon_{G}(e)$ as in \cite[p.\ 3]{bondy-murty}.  This classical definition gives an incidence structure to any set-system hypergraph that can be extended to homomorphisms.

\begin{defn}[Incidence-forming functor]
Given a set-system hypergraph $G$, define an incidence hypergraph $\mathcal{I}(G):=\left(V(G),E(G),I\mathcal{I}(G),\varsigma_{\mathcal{I}(G)},\omega_{\mathcal{I}(G)}\right)$, where
\begin{itemize}
\item $I\mathcal{I}(G):=\left\{(v,e)\in V(G)\times E(G):v\in\epsilon_{G}(e)\right\}$,
\item $\varsigma_{\mathcal{I}(G)}(v,e):=v$, $\omega_{\mathcal{I}(G)}(v,e):=e$.
\end{itemize}
Given $\xymatrix{G\ar[r]^{\phi} & H}\in\mathfrak{H}$, define $I\mathcal{I}(\phi):I\mathcal{I}(G)\to I\mathcal{I}(H)$ by 
$
I\mathcal{I}(\phi)(v,e):=\left(V(\phi)(v),E(\phi)(e)\right) 
$
and $\mathcal{I}(\phi):=\left(V(\phi),E(\phi),I\mathcal{I}(\phi)\right)$.
\end{defn}

A routine calculation shows that $\mathcal{I}$ defines a functor from $\cat{H}$ to $\cat{R}$ that introduces a single incidence $(v,e)$ into a set-system hypergraph $G$ for each $v\in\epsilon_{G}(e)$ and $e\in E(G)$. Unfortunately, while $\mathcal{I}$ is a functor, it is quite poorly behaved.  Indeed, it preserves neither limits nor colimits, meaning it cannot admit any adjoint functor.

\begin{lem}[Failure of $\mathcal{I}$]\label{Ibad}
The functor $\mathcal{I}$ is neither continuous nor cocontinuous.
\end{lem}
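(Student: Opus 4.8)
The plan is to prove both failures by exhibiting one explicit limit and one explicit colimit that $\mathcal{I}$ destroys. The key structural fact I would exploit is that $\cat{R}=\Set^{\cat{D}}$ is a presheaf topos, so all of its limits and colimits are computed pointwise in $\check{V}$, $\check{E}$, and $I$; this lets me compare $\mathcal{I}$ applied to a $($co$)$limit formed in $\cat{H}$ against the corresponding $($co$)$limit formed in $\cat{R}$ after applying $\mathcal{I}$, simply by counting vertices, edges, and incidences.

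For non-continuity I would test the simplest limit, the terminal object $($the empty product$)$. Since $V^{\star}$ is right adjoint to $V$ it preserves limits, so the terminal object of $\cat{H}$ is $T:=V^{\star}(\{*\})=\left(\mathcal{P}(\{*\}),id,\{*\}\right)$: a single vertex carrying both a $0$-edge and a $1$-edge. Applying $\mathcal{I}$, the $0$-edge contributes no incidence while the $1$-edge contributes exactly one, so $\mathcal{I}(T)$ has one vertex, two edges, and one incidence. The terminal object of $\cat{R}$, however, is the constant functor at $\{*\}$, with one vertex, one edge, and one incidence. As $\check{E}$ separates them by cardinality, $\mathcal{I}(T)$ cannot be terminal in $\cat{R}$, and since a continuous functor must preserve terminal objects, $\mathcal{I}$ is not continuous. $($The product counterexample of Lemma \ref{topos-fail} would serve equally well, but the terminal object is cleaner.$)$

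For non-cocontinuity I would reuse verbatim the coequalizer data $P_{1}$, $\alpha$, $\beta$, and $H$ from Lemma \ref{topos-fail}, where $\alpha,\beta:V^{\diamond}(\{0\})\to P_{1}$ select the two endpoints $v,w$ of the single edge $e$ and the coequalizer $H$ identifies them into one vertex $u$ with $\epsilon_{H}(e)=\{u\}$. On one hand, $\mathcal{I}(H)$ has a single incidence $(u,e)$, because set membership is idempotent. On the other hand, forming the coequalizer of $\mathcal{I}(\alpha),\mathcal{I}(\beta)$ pointwise in $\cat{R}$ merges $v,w$ into $u$ but leaves the two incidences $(v,e),(w,e)$ of $\mathcal{I}(P_{1})$ untouched $($the incidence maps out of $\mathcal{I}(V^{\diamond}(\{0\}))$ are empty, so they impose no identification on $I)$, producing one vertex, one edge, and two distinct incidences. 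Comparing via $I$ shows these objects are non-isomorphic, so $\mathcal{I}$ does not preserve this coequalizer and is therefore not cocontinuous.

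The substantive point — and the only place requiring genuine thought rather than bookkeeping — is conceptual: the whole failure traces to the asymmetry between the idempotent membership relation ``$v\in\epsilon_{G}(e)$,'' which records a vertex-edge pair at most once, and the free incidence set of $\cat{R}$, which may record the same vertex-edge pair with arbitrary multiplicity. The remaining work is routine: correctly identifying the terminal object of $\cat{H}$ and verifying the pointwise coequalizer in $\cat{R}$, both of which are immediate once the pointwise computation of $($co$)$limits in the presheaf topos $\cat{R}$ is invoked.
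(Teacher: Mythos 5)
Your proof is correct, and it splits cleanly into one half that matches the paper and one half that does not. For non-cocontinuity you reuse exactly the paper's argument: the paper also takes the coequalizer data $\alpha,\beta,H$ from Lemma \ref{topos-fail}, forms the pointwise coequalizer of $\mathcal{I}(\alpha),\mathcal{I}(\beta)$ in $\cat{R}$, and observes that the two incidences $(v,e),(w,e)$ survive while $\mathcal{I}(H)$ has only one incidence, so the two objects differ on $I$. For non-continuity, however, you take a genuinely different and more elementary route. The paper tests binary products, comparing $\mathcal{I}\left(P_{1}{\prod}^{\cat{H}}P_{1}\right)$ (four vertices, two edges, using the product already computed in Lemma \ref{topos-fail}) against the pointwise product $\mathcal{I}\left(P_{1}\right){\prod}^{\cat{R}}\mathcal{I}\left(P_{1}\right)$ (four vertices, one edge). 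You instead test the empty limit: your identification of the terminal object of $\cat{H}$ as $V^{\star}(\{*\})$ is right, since $V^{\star}$ is a right adjoint and hence preserves terminal objects, and it agrees with the $E_{1}{\coprod}^{\cat{H}}E_{0}$ listed in the paper's concluding table; applying $\mathcal{I}$ gives two edges but one incidence, while the terminal object $I^{\star}(\{1\})$ of $\cat{R}$ has one edge, and a continuous functor must preserve the empty limit. What each approach buys: yours is pure counting with no product computation, arguably the cleanest possible witness; the paper's choice reuses work already done in Lemma \ref{topos-fail} and exhibits the failure on a nonempty diagram, where it traces to the collapse of distinct edge-endpoint data rather than to the loose $0$-edge of the terminal object, which is the phenomenon your example exploits. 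One small precision worth adding to your closing remark: your idempotent-membership explanation accounts for the coequalizer failure, but the terminal-object failure comes from the separate fact that $0$-edges generate no incidences at all.
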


\begin{proof}
Let $P_{1}$ be the path of length 1.  By \cite[Proposition I.2.15.1]{borceux}, the product of $\mathcal{I}\left(P_{1}\right)$ with itself in $\mathfrak{R}$ is component-wise, yielding four vertices and only one edge.  On the other hand, Lemma \ref{topos-fail} computed the product of $P_{1}$ with itself in $\cat{H}$, so consider the action of $\mathcal{I}$ upon the result. 
\[
\begin{array}{cc}
\mathcal{I}\left(P_{1}{\prod}^{\mathfrak{H}}P_{1}\right) & \mathcal{I}\left(P_{1}\right){\prod}^{\mathfrak{R}}\mathcal{I}\left(P_{1}\right) \\
\includegraphics[scale=1]{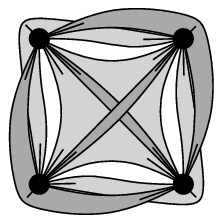} & \includegraphics[scale=1]{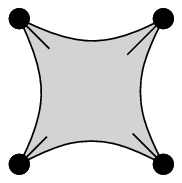} \\ 
& 
\end{array}
\]
Observe that $\mathcal{I}\left(P_{1}{\prod}^{\mathfrak{H}}P_{1}\right)\not\cong_{\mathfrak{R}}\mathcal{I}\left(P_{1}\right){\prod}^{\mathfrak{R}}\mathcal{I}\left(P_{1}\right)$.

On the other hand, let $\alpha$, $\beta$, $H$, and $q$ be as in Lemma \ref{topos-fail}. Applying the functor $\mathcal{I}$, consider the coequalizer of $\mathcal{I}(\alpha)$ and $\mathcal{I}(\beta)$ in $\cat{R}$. Here, the vertices are quotiented, but the two incidences are not, giving the incidence hypergraph $J$ below.
\[
\includegraphics[scale=1]{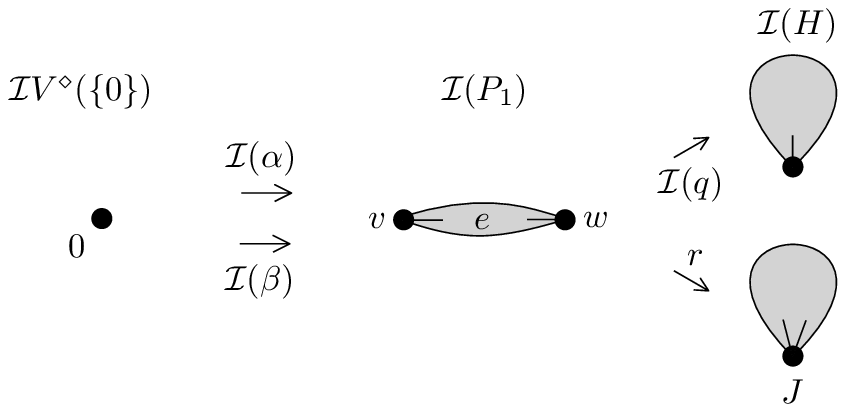}
\]
Note that $J\not\cong_{\mathfrak{R}}\mathcal{I}(H)$.
\qed \end{proof}

Likewise, there is an obvious way to change an incidence hypergraph into a set-system hypergraph, by simply defining the endpoint set of an edge to be all vertices which share an incidence with it.

\begin{defn}[Incidence-forgetting construction]
Given an incidence hypergraph $G$, define a set-system hypergraph $\mathscr{F}(G):=\left(\check{E}(G),\epsilon_{\mathscr{F}(G)},\check{V}(G)\right)$, where
$
\epsilon_{\mathscr{F}(G)}(e):=\left\{v\in\check{V}(G):\varsigma_{G}^{-1}(v)\cap\omega_{G}^{-1}(e)\neq\emptyset\right\}.
$
\end{defn}

One can quickly show that $\mathscr{F}\left(\mathcal{I}(G)\right)=G$ for every $G\in\ob(\mathfrak{H})$, but unfortunately, this map of objects cannot be made into a functor.  The reason is that in $\cat{R}$, an edge can be mapped to another edge without covering all of the latter's endpoints.  That is, homomorphisms in $\cat{R}$ allow ``locally graphic'' behavior as described in \cite{Chen2017}, \cite{OHHar}, \cite{AH1}, \cite{OH1}.  However, the structure of $\cat{H}$ cannot allow such behavior.

\begin{lem}[Failure of $\mathscr{F}$]\label{Fworse}
The map $\mathscr{F}$ cannot be made into a functor.
\end{lem}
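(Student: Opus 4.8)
The plan is to show that there is no way to define $\mathscr{F}$ on morphisms at all, by exhibiting a single arrow of $\cat{R}$ whose source and target are sent by the object map $\mathscr{F}$ to set-system hypergraphs between which $\cat{H}$ has \emph{no} morphisms whatsoever. If $\cat{H}\left(\mathscr{F}(G),\mathscr{F}(H)\right)=\emptyset$ while $\cat{R}(G,H)\neq\emptyset$, then no assignment $\phi\mapsto\mathscr{F}(\phi)$ can exist, so functoriality fails before composition or identities are even considered. This is cleaner than checking that the ``obvious'' componentwise assignment $\left(\check{E}(\phi),\check{V}(\phi)\right)$ violates the comma-category condition, since the latter would only rule out one candidate rather than all of them.

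For the witnesses I would take $G:=I^{\diamond}(\{1\})$, the single $1$-edge (one vertex $w$, one edge $f$, one incidence $j$), and $H:=\check{V}^{\star}(\{1,2\})$, the single edge incident to two distinct vertices $v_{1},v_{2}$ via two incidences $i_{1},i_{2}$. There is an evident $\cat{R}$-morphism $\phi\colon G\to H$ sending $w\mapsto v_{1}$, $f$ to the unique edge of $H$, and $j\mapsto i_{1}$; the port and attachment compatibility conditions are immediate, so $\cat{R}(G,H)\neq\emptyset$. Applying the object map gives $\mathscr{F}(G)$, a $1$-edge on one vertex (so its unique edge has endpoint set $\{w\}$), and $\mathscr{F}(H)$, a $2$-edge on two vertices (so its unique edge has endpoint set $\{v_{1},v_{2}\}$).

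It then remains to verify $\cat{H}\left(\mathscr{F}(G),\mathscr{F}(H)\right)=\emptyset$. Any $\psi\in\cat{H}\left(\mathscr{F}(G),\mathscr{F}(H)\right)$ must send $f$ to the unique edge $e$ of $\mathscr{F}(H)$ and, by the defining condition of a morphism in $\left(id_{\Set}\downarrow\mathcal{P}\right)$, satisfy $\epsilon_{\mathscr{F}(H)}\left(E(\psi)(f)\right)=\mathcal{P}V(\psi)\left(\epsilon_{\mathscr{F}(G)}(f)\right)$. The left side equals $\{v_{1},v_{2}\}$, while the right side is $\mathcal{P}V(\psi)(\{w\})=\{V(\psi)(w)\}$, a singleton; these cannot coincide, so no such $\psi$ exists. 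The content of the argument is precisely the ``locally graphic'' freedom in $\cat{R}$ noted before the lemma: an edge of the domain may be carried into an edge of the codomain while covering only part of its endpoint set, a move that the endpoint-preservation law of $\cat{H}$ forbids. The only real subtlety --- and the step I would be most careful about --- is resisting the temptation to argue merely that the naive componentwise map fails; the decisive point is that the \emph{entire} target hom-set is empty, which forecloses every possible definition of $\mathscr{F}$ on this arrow.
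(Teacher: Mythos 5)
Your proof is correct and takes essentially the same route as the paper: the paper's own argument also picks a $1$-edge $G$ mapping into a $2$-edge $H$ in $\cat{R}$ (vertex to vertex, edge to edge, incidence to incidence) and then shows that $\cat{H}\left(\mathscr{F}(G),\mathscr{F}(H)\right)$ is empty because the endpoint-preservation law would force a singleton to equal a two-element set. Your only departure is cosmetic --- naming the witnesses as $I^{\diamond}(\{1\})$ and $\check{V}^{\star}(\{1,2\})$ instead of drawing them --- and your emphasis that the entire hom-set is empty (not merely that the componentwise candidate fails) is exactly the paper's closing observation.
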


\begin{proof}
Let $\xymatrix{G\ar[r]^{\phi} & H}\in\cat{R}$ be the map below, where $x\mapsto v$, $f\mapsto e$, and $k\mapsto i$.
\begin{center}
\includegraphics[scale=1]{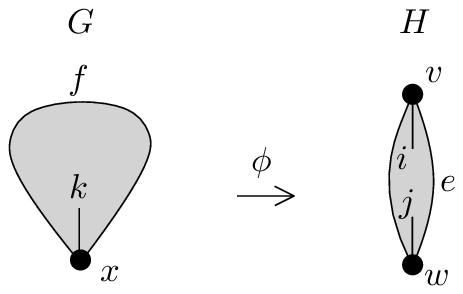}
\end{center}
If there was a map $\xymatrix{\mathscr{F}(G)\ar[r]^{\varphi} & \mathscr{F}(H)}\in\cat{H}$, then the edge $f$ can only map to the edge $e$.  Thus, one has
\[
\{v,w\}
=\epsilon_{\mathscr{F}(H)}(e)
=\left(\epsilon_{\mathscr{F}(H)}\circ E(\varphi)\right)(f)
=\left(\mathcal{P}V(\varphi)\circ\epsilon_{\mathscr{F}(G)}\right)(f)
=\mathcal{P}V(\varphi)\left(\{x\}\right)
=\left\{V(\varphi)(x)\right\},
\]
which is absurd.  Thus, there is no map from $\mathscr{F}(G)$ to $\mathscr{F}(H)$ to be assigned to $\phi$.
\qed \end{proof}

Consequently, results such as \cite[Theorem 3.1.2]{Chen2017} are likely not achievable when using the set-system hypergraphs of $\cat{H}$. The $\cat{H}$-$\cat{R}$ functors are summarized in Table \ref{lib5table} below.

\begin{table}[H]
\centering
\renewcommand{\arraystretch}{1}
\begin{tabular}{|l|c|l|}
\hline
Functor & $\mathrm{Dom}\rightarrow \mathrm{Codom}$ & Note \\
\hline\hline
$\mathcal{I}$ & $\cat{H}\rightarrow \cat{R}$ & Incidence insertion functor. \\ 
$\mathscr{F}$ & $\cat{R}\rightarrow \cat{H}$ & Incidence forgetful operation (Not
functorial). \\ \hline
\end{tabular}
\caption{A library of functors between $\mathfrak{H}$ and $\mathfrak{R}$}
\label{lib5table}
\end{table}

\subsection{Functorial Relationship to $\cat{Q}$}

Since $\cat{R}=\Set^{\cat{D}}$ and $\cat{Q}=\Set^{\cat{E}}$ are so similar as both functor categories and comma categories, natural questions to ask would be how different they are and how to connect them.  Firstly, the two categories are not equivalent as $\cat{Q}$ requires two objects to generate by Theorem \ref{generation-Q}, but $\cat{R}$ requires three by Theorem \ref{generation-R}.

On the other hand, any functor from $\cat{D}$ to $\cat{E}$ gives rise to a composition functor from $\Set^{\cat{E}}$ to $\Set^{\cat{D}}$ by \cite[Example A4.1.4]{elephant1}.  Moreover, \cite[Theorem I.3.7.2]{borceux} and its dual guarantee that such a composition functor admits both a left and a right adjoint functor via Kan extensions.

Specifically, consider the functor from $\cat{D}$ to $\cat{E}$ determined by $y\mapsto s$, $z\mapsto t$.
\[\begin{array}{ccc}
\cat{D}	&	\Rightarrow	&	\cat{E}\\
\xymatrix{
&	2\ar[dl]_{y}\ar[dr]^{z}\\
0	&	&	1\\
}
&	&
\xymatrix{
1\ar@/_/[d]_{s}\ar@/^/[d]^{t}\\
0\\
}\\
\end{array}\]
The corresponding composition functor $\Upsilon:\cat{Q}\to\cat{R}$ would have the following action.
\begin{itemize}
\item $\Upsilon\left(\overrightarrow{V}(Q),\overrightarrow{E}(Q),\sigma_{Q},\tau_{Q}\right)=\left(\overrightarrow{V}(Q),\overrightarrow{V}(Q),\overrightarrow{E}(Q),\sigma_{Q},\tau_{Q}\right)$,
\item $\Upsilon\left(\overrightarrow{V}(\phi),\overrightarrow{E}(\phi)\right)=\left(\overrightarrow{V}(\phi),\overrightarrow{V}(\phi),\overrightarrow{E}(\phi)\right)$.
\end{itemize}
Effectively, this functor converts directed edges into incidences, and duplicates vertices of the quiver as both vertices and edges of the incidence hypergraph.

While categorically natural, the combinatorial meaning of $\Upsilon$ is not immediately apparent, and will be discussed later.  The adjoints of $\Upsilon$, however, have discernible combinatorial meaning.  Via Kan extensions, $\Upsilon$ admits a right adjoint $\Upsilon^{\star}:\cat{R}\to\cat{Q}$ and a left adjoint $\Upsilon^{\diamond}:\cat{R}\to\cat{Q}$ with the following action on objects:
\begin{itemize}
\item $\Upsilon^{\star}(G)=\left(\check{V}(G)\times\check{E}(G),\check{V}(G)\times I(G)\times\check{E}(G),\sigma_{\Upsilon^{\star}(G)},\tau_{\Upsilon^{\star}(G)}\right)$, where $\sigma_{\Upsilon^{\star}(G)}(v,i,e)=\left(\varsigma_{G}(i),e\right)$ and $\tau_{\Upsilon^{\star}(G)}(v,i,e)=\left(v,\omega_{G}(i)\right)$;
\item $\Upsilon^{\diamond}(G)=\left(\check{V}(G)\sqcup\check{E}(G),I(G),\varpi_{1}\circ\varsigma_{G},\varpi_{2}\circ\omega_{G}\right)$, where $\varpi_{1}:\check{V}(G)\to\check{V}(G)\sqcup\check{E}(G)$ and $\varpi_{2}:\check{E}(G)\to\check{V}(G)\sqcup\check{E}(G)$ are the canonical inclusions into the disjoint union.
\end{itemize}
As can be seen through a simple example in Figure \ref{UpsilonDiamondStarExample}, $\Upsilon^{\diamond}$ encodes the bipartite incidence digraph, converting an incidence into a directed edge.  On the other hand, $\Upsilon^{\star}$ encodes the incidence matrix with extra structure.  Indeed, a simple calculation shows that a directed loop in $\Upsilon^{\star}(G)$ corresponds precisely to an incidence in $G$. 

\begin{figure}[H]
\centering
\includegraphics[scale=1]{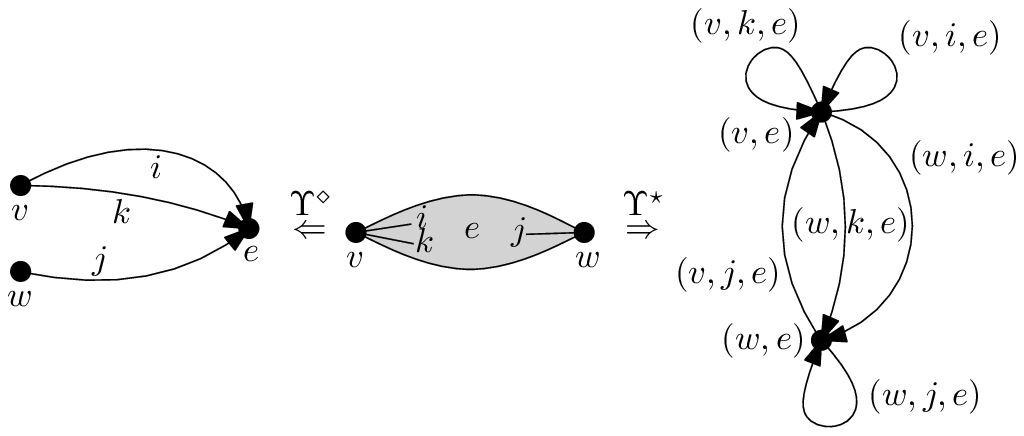}
\caption{The action of $\Upsilon^{\diamond}$ and $\Upsilon^{\star}$}
\label{UpsilonDiamondStarExample}
\end{figure}

These $\cat{R}$-$\cat{Q}$-inter-categorical functors are collected in Table \ref{lib6table}.

\begin{table}[H]
\centering
\renewcommand{\arraystretch}{1}
\begin{tabular}{|l|c|l|}
\hline
Functor & $\mathrm{Dom}\rightarrow \mathrm{Codom}$ & Note \\
\hline\hline
$\Upsilon $ & $\cat{Q}\rightarrow \cat{R}$ & Edge to incidence conversion functor. \\ 
$\Upsilon ^{\ast }$ & $\cat{R}\rightarrow \cat{Q}$ & Looped incidence matrix (with additional links). \\ 
$\Upsilon ^{\diamond}$ & $\cat{R}\rightarrow \cat{Q}$ & Bipartite incidence digraph. \\ \hline
\end{tabular}
\caption{A library of functors between $\mathfrak{Q}$ and $\mathfrak{R}$}
\label{lib6table}
\end{table}

Yet, what do the extra edges in $\Upsilon^{\star}(G)$ mean?  This structure is closely related to the exponential in $\cat{Q}$, as well as $\cat{R}$.  Recall that both $\cat{Q}$ and $\cat{R}$ are cartesian closed by \cite[Proposition A1.5.5]{elephant1}.  However, while several sources discuss and even compute examples of quiver exponentials (\cite{brown2008}, \cite{bumby1986}), none seem to have written down a concrete representation of the exponential for $\cat{Q}$ beyond the general construction (\cite{elephant1}, \cite{maclane2}).  A possible explanation for this is that, as seen below, the quiver exponential is intimately tied to the incidence hypergraph exponential via $\Upsilon$.

To explain, first consider the evaluation functors $\check{V}$, $\check{E}$, and $I$.  All of three are components of essential geometric morphisms, but the functors $\check{V}$ and $\check{E}$ are also logical functors, preserving the exponential operation.  However, $I$ fails to be logical.

\begin{lem}[Structure of $\check{V}$ \& $\check{E}$]\label{ve-geometric}
For $G\in\ob(\cat{R})$ and $S\in\ob(\Set)$, the Frobenius morphisms $\xymatrix{\check{V}^{\diamond}\left(\check{V}(G)\times S\right)\ar[r]^(0.6){\Phi_{V}} & G\prod\check{V}^{\diamond}(S)}\in\cat{R}$ and $\xymatrix{\check{E}^{\diamond}\left(\check{E}(G)\times S\right)\ar[r]^(0.6){\Phi_{E}} & G\prod\check{E}^{\diamond}(S)}\in\cat{R}$ are given by $\Phi_{V}=\left(id_{\check{V}(G)\times S},id_{\emptyset},id_{\emptyset}\right)$ and $\Phi_{E}=\left(id_{\emptyset},id_{\check{E}(G)\times S},id_{\emptyset}\right)$.  Moreover, both $\left(\check{V}^{\diamond},\check{V},\check{V}^{\star}\right)$ and $\left(\check{E}^{\diamond},\check{E},\check{E}^{\star}\right)$ are atomic geometric embeddings from $\Set$ to $\cat{R}$, which are not surjections.  Consequently, $\Set$ is equivalent to the slice categories $\cat{R}/\check{V}^{\diamond}(\{1\})$ and $\cat{R}/\check{E}^{\diamond}(\{1\})$.
\end{lem}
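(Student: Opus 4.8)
The plan is to recognize the adjoint triple $\left(\check{V}^{\diamond},\check{V},\check{V}^{\star}\right)$, viewed as a geometric morphism $\Set\to\cat{R}$ with inverse image $\check{V}$, as the canonical local homeomorphism (\'etale geometric morphism) associated to a subterminal object of $\cat{R}$; once this identification is made, atomicity, the embedding property, the failure of surjectivity, and the slice equivalence are all imported from the standard theory of slices, while the explicit Frobenius maps are a short direct computation. Throughout I use that products in $\cat{R}=\Set^{\cat{D}}$ are computed objectwise and that $\check{V}^{\diamond}(S)=(S,\emptyset,\emptyset)$ and $\check{E}^{\diamond}(S)=(\emptyset,S,\emptyset)$.

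First I would dispatch the Frobenius morphisms. Since $\check{V}^{\diamond}(S)$ has empty edge and incidence components, the objectwise product collapses to $G\times\check{V}^{\diamond}(S)=\left(\check{V}(G)\times S,\emptyset,\emptyset\right)$, which is literally $\check{V}^{\diamond}\left(\check{V}(G)\times S\right)$. Unwinding the definition of the Frobenius map -- pair the $\check{V}^{\diamond}$-images of the two projections of $\check{V}(G)\times S$, composing the first with the counit $\check{V}^{\diamond}\check{V}\Rightarrow id_{\cat{R}}$ at $G$ -- then shows $\Phi_{V}=\left(id_{\check{V}(G)\times S},id_{\emptyset},id_{\emptyset}\right)$, an isomorphism; the computation for $\Phi_{E}$ is identical after exchanging the roles of the objects $0$ and $1$ of $\cat{D}$.

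Next I would set $A:=\check{V}^{\diamond}(\{1\})=(\{1\},\emptyset,\emptyset)$ and prove $A$ is subterminal via $A\times A=(\{1\}\times\{1\},\emptyset,\emptyset)\cong A$, while the unique map $A\to\mathbb{1}$ fails to be epic since its edge component $\emptyset\to\{1\}$ is not onto. I would then identify the slice: any morphism $G\to A$ forces $\check{E}(G)=I(G)=\emptyset$, so the objects of $\cat{R}/A$ are exactly the isolated-vertex hypergraphs $(V,\emptyset,\emptyset)$ and the morphisms over $A$ are just functions of vertex sets, giving $\cat{R}/A\simeq\Set$, which is the asserted equivalence. The pivotal bookkeeping step is to check that, under this equivalence, the \'etale triple $\left(\Sigma_{A},A^{\ast},\Pi_{A}\right)$ for $A$ coincides with $\left(\check{V}^{\diamond},\check{V},\check{V}^{\star}\right)$: indeed $A^{\ast}(G)=\bigl(A\times G\to A\bigr)\cong\check{V}^{\diamond}\check{V}(G)$ corresponds to the set $\check{V}(G)$, so $A^{\ast}$ is $\check{V}$ transported along $\cat{R}/A\simeq\Set$, and the two adjoints match accordingly.

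With this identification the remaining claims follow from the standard theory of local homeomorphisms (see \cite{elephant1}): the inverse image $A^{\ast}\cong\check{V}$ of an \'etale morphism is logical, hence the morphism is atomic; a local homeomorphism at a subterminal object is a geometric embedding; and it is not a surjection because $A\to\mathbb{1}$ is not epic (equivalently, $\check{V}$ is not faithful). The $\check{E}$ case is verbatim with $A':=\check{E}^{\diamond}(\{1\})=(\emptyset,\{1\},\emptyset)$, using the $0\leftrightarrow1$ symmetry of $\cat{D}$. I expect the main obstacle to be exactly the identification of $\left(\check{V}^{\diamond},\check{V},\check{V}^{\star}\right)$ with the \'etale triple, so that the black-box theorems apply cleanly. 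If one prefers to avoid the slice machinery, the equivalent difficulty is to verify directly that $\check{V}$ is logical, namely that it preserves exponentials (delivered by the Frobenius computation) and the subobject classifier -- the latter requiring the computation that the cosieve object $\Omega_{\cat{R}}$ evaluates at $0$ to a two-element set, the subobject classifier of $\Set$, in sharp contrast to its five-element value at $2$, which is precisely the reason $I$ fails to be logical.
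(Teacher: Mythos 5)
Your proposal is correct, and it reaches the structural claims by a genuinely different route from the paper's. The paper argues bottom-up: it performs the same Frobenius computation (applying the three evaluation functors to the defining diagram from \cite[Lemma A1.5.8]{elephant1}), concludes that $\check{V}$ is logical from the isomorphism $\Phi_{V}$, obtains the embedding property by noting that the unit $S\to\check{V}\check{V}^{\diamond}(S)$ is $id_{S}$ (so $\check{V}^{\diamond}$, hence the direct image, is full and faithful), refutes surjectivity with an explicit witness of non-faithfulness ($\check{V}\check{E}^{\star}(f)=id_{\{1\}}$ for $f$ the swap on a two-element set, although $\check{E}^{\star}(f)\neq id$), and only at the end deduces the slice equivalence from ``logical with a left adjoint.'' You argue top-down: you first prove $\cat{R}/\check{V}^{\diamond}(\{1\})\simeq\Set$ directly (any morphism into $\left(\{1\},\emptyset,\emptyset\right)$ forces empty edge and incidence sets), identify $\left(\check{V}^{\diamond},\check{V},\check{V}^{\star}\right)$ with the \'etale triple $\left(\Sigma_{A},A^{\ast},\Pi_{A}\right)$ of the subterminal object $A=\check{V}^{\diamond}(\{1\})$, and then import atomicity, the embedding property, and non-surjectivity from the general theory of open inclusions. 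Each approach buys something: the paper's computations yield concrete witnesses, while your identification makes logicality of $\check{V}$ rest on the fact that pullback functors $A^{\ast}$ are logical, which is arguably the tighter justification --- a Frobenius isomorphism by itself certifies only preservation of exponentials, and it is your closing cosieve computation ($\Omega_{\cat{R}}$ takes the two-element value $\left\{\emptyset,\{id_{0}\}\right\}$ at $0$ but has five elements at $2$) that makes preservation of the subobject classifier explicit, precisely the point separating $\check{V}$ and $\check{E}$ from $I$. Two small polish items: subterminality of $A$ should be witnessed by the diagonal (or, most simply, by injectivity of all three components of $A\to\mathbb{1}_{\cat{R}}$), since an abstract isomorphism $A\times A\cong A$ is not by itself sufficient; and the criterion that the \'etale morphism over $A$ is a surjection iff $A\to\mathbb{1}_{\cat{R}}$ is epic deserves its own citation, as it carries real weight in your non-surjectivity claim.
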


\begin{proof}

Observe that
\begin{itemize}
\item $\check{V}\check{V}^{\diamond}\left(\check{V}(G)\times S\right)=\check{V}(G)\times S=\check{V}\left(G\prod\check{V}^{\diamond}(S)\right)$,
\item $\check{E}\check{V}^{\diamond}\left(\check{V}(G)\times S\right)=\emptyset=\check{E}\left(G\prod\check{V}^{\diamond}(S)\right)$,
\item $I\check{V}^{\diamond}\left(\check{V}(G)\times S\right)=\emptyset=I\left(G\prod\check{V}^{\diamond}(S)\right)$.
\end{itemize}
Thus, $\check{E}\left(\Phi_{V}\right)=I\left(\Phi_{V}\right)=id_{\emptyset}$.  Applying $\check{V}$ to the defining diagram for $\Phi_{V}$ from \cite[Lemma A1.5.8]{elephant1} yields the following diagram in $\Set$.
\[\xymatrix{
\check{V}(G)\ar[d]_{id_{\check{V}(G)}}	&	&	\check{V}(G)\times S\ar[drr]^{\pi_{S}}\ar[ll]_{\pi_{\check{V}(G)}}\ar[d]_{\check{V}\left(\Phi_{V}\right)}\\
\check{V}(G)	&	&	\check{V}\left(G\prod\check{V}^{\diamond}(S)\right)\ar[rr]_{\pi_{S}}\ar[ll]^{\pi_{\check{V}(G)}}	&	&	S\\
}\]
Consequently, $\check{V}\left(\Phi_{V}\right)=id_{\check{V}(G)\times S}$.  Thus, $\Phi_{V}$ is an isomorphism, so $\check{V}$ is logical.

The unit $\xymatrix{S\ar[r]^(0.4){\eta_{S}} & \check{V}\check{V}^{\diamond}(S)}\in\Set$ is precisely $\eta_{S}=id_{S}$, showing that $\check{V}^{\diamond}$ is full and faithful by \cite[Proposition I.3.4.1]{borceux}.  Let $\xymatrix{[2]\ar[r]^{f} & [2]}\in\Set$ by $f(1):=2$ and $f(2):=1$.  Note that $\check{V}\check{E}^{\star}(f)=id_{\{1\}}=\check{V}\left(id_{\check{E}^{\star}\left([2]\right)}\right)$, despite $\check{E}^{\star}(f)\neq id_{\check{E}^{\star}\left([2]\right)}$.  Thus, $\check{V}$ is not faithful.  Therefore, $\left(\check{V}^{\diamond},V,\check{V}^{\star}\right)$ is an embedding but not a surjection.

The proof for $\check{E}$ is nearly identical.

\qed \end{proof}

\begin{lem}[Structure of $I$]\label{i-notlogical}
For $G\in\ob(\cat{R})$ and $S\in\ob(\Set)$, the Frobenius morphism $\xymatrix{I^{\diamond}\left(I(G)\times S\right)\ar[r]^(0.6){\Phi_{I}} & G\prod I^{\diamond}(S)}\in\cat{R}$ is given by $\Phi_{I}=\left(\varsigma_{G}\times id_{S},\omega_{G}\times id_{S},id_{I(G)\times S}\right)$.  Moreover, $\left(I^{\diamond},I,I^{\star}\right)$ is an essential geometric embedding from $\Set$ to $\cat{R}$, which is neither a surjection nor atomic.
\end{lem}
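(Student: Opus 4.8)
The plan is to follow the template of Lemma \ref{ve-geometric} essentially verbatim, adapting each step to the evaluation functor $I$ and its adjoints $I^{\diamond}\dashv I\dashv I^{\star}$, and to pinpoint precisely where $I$ — unlike $\check{V}$ and $\check{E}$ — ceases to be logical.

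First I would compute the Frobenius morphism $\Phi_I$. Since $\cat{R}$ is a presheaf topos, products are formed pointwise, so $G\prod I^{\diamond}(S)$ has incidence set $I(G)\times S$, vertex set $\check{V}(G)\times S$, and edge set $\check{E}(G)\times S$, with port $\varsigma_G\times id_S$ and attachment $\omega_G\times id_S$; likewise $I^{\diamond}(I(G)\times S)$ has all three underlying sets equal to $I(G)\times S$ with identity port and attachment. I would then apply the evaluation functors $\check{V}$, $\check{E}$, and $I$ to the defining diagram for $\Phi_I$ from \cite{elephant1}, exactly as in Lemma \ref{ve-geometric}, reading off the three components (which suffices since these functors jointly reflect isomorphisms in $\cat{R}$). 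The only nontrivial input is the counit $\epsilon_G:I^{\diamond}I(G)\to G$ of $I^{\diamond}\dashv I$, which the universal property forces to be $\epsilon_G=\left(\varsigma_G,\omega_G,id_{I(G)}\right)$; composing $\epsilon_G$ with $I^{\diamond}$ applied to the two product projections and pairing yields the claimed $\Phi_I=\left(\varsigma_G\times id_S,\omega_G\times id_S,id_{I(G)\times S}\right)$. This is routine once the counit is identified.

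Next I would verify that $\left(I^{\diamond},I,I^{\star}\right)$ is an essential geometric embedding. As an evaluation functor on a presheaf topos, $I$ has the left adjoint $I^{\diamond}$ and the right adjoint $I^{\star}$, hence preserves finite limits; thus $I\dashv I^{\star}$ is a geometric morphism and the further left adjoint $I^{\diamond}\dashv I$ makes it essential. For the embedding claim I would compute the unit $\eta_S:S\to II^{\diamond}(S)$ of $I^{\diamond}\dashv I$ and observe $II^{\diamond}(S)=S$ with $\eta_S=id_S$; by \cite{borceux} this shows $I^{\diamond}$ is full and faithful, giving the embedding (the inducing functor $p:\cat{1}\to\cat{D}$ picking the object $2$ is full and faithful since $\cat{D}(2,2)=\{id_2\}$, which is the structural reason both $I^{\diamond}$ and $I^{\star}$ are full and faithful).

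Finally, the two negative assertions. To see the morphism is not a surjection I would show $I$ is not faithful: for a non-identity self-map $f$ of a two-element set, $\check{V}^{\diamond}(f)\neq id$ in $\cat{R}$ yet $I\check{V}^{\diamond}(f)=id_{\emptyset}$, since $I\check{V}^{\diamond}(X)=\emptyset$. To see it is not atomic I would exhibit $G,S$ making $\Phi_I$ fail to be an isomorphism: taking $G=\check{V}^{\diamond}(\{1\})$, so $I(G)=\emptyset$ while $\check{V}(G)=\{1\}$, and $S=\{1\}$, the vertex component $\varsigma_G\times id_S:\emptyset\to\{1\}$ of $\Phi_I$ is not surjective; hence $\Phi_I$ is not an isomorphism, $I$ is not logical, and the morphism is not atomic. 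The main obstacle is conceptual rather than computational: I must invoke the same mechanism as Lemma \ref{ve-geometric}, namely that $\Phi_I$ being an isomorphism is equivalent to $I$ being logical (the Frobenius-reciprocity criterion of \cite{elephant1}), so that the single failing component of $\Phi_I$ genuinely witnesses the loss of atomicity, while keeping the three distinct behaviors — non-surjectivity, non-atomicity, and the contrasting logicality of $\check{V},\check{E}$ — cleanly separated.
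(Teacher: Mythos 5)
Your proposal is correct and follows essentially the same route as the paper: compute $\Phi_{I}$ componentwise by applying the evaluation functors to the defining diagram of \cite[Lemma A1.5.8]{elephant1}, get the embedding from the identity unit of $I^{\diamond}\dashv I$ (as in Lemma \ref{ve-geometric}), get non-surjectivity from the non-faithfulness of $I$, and get non-atomicity from the failure of $\Phi_{I}$ to be an isomorphism. The only difference is cosmetic: the paper witnesses that failure with $G=I^{\star}([2])$, $S=[2]$, where $\Phi_{I}$ is not monic, while you use $G=\check{V}^{\diamond}(\{1\})$, $S=\{1\}$, where the vertex component $\emptyset\to\{1\}$ is not epic; both witnesses are valid.
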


\begin{proof}

Observe that
\begin{itemize}
\item $\check{V}I^{\diamond}\left(I(G)\times S\right)=\check{E}I^{\diamond}\left(I(G)\times S\right)=II^{\diamond}\left(I(G)\times S\right)=I(G)\times S=I\left(G\prod I^{\diamond}(S)\right)$,
\item $\check{V}\left(G\prod I^{\diamond}(S)\right)=\check{V}(G)\times S$,
$\check{E}\left(G\prod I^{\diamond}(S)\right)=\check{E}(G)\times S$.
\end{itemize}
Applying $I$ to the defining diagram for $\Phi_{I}$ from \cite[Lemma A1.5.8]{elephant1} yields the following diagram in $\Set$.
\[\xymatrix{
I(G)\ar[d]_{id_{I(G)}}	&	&	I(G)\times S\ar[drr]^{\pi_{S}}\ar[ll]_{\pi_{I(G)}}\ar[d]_{I\left(\Phi_{I}\right)}\\
I(G)	&	&	I\left(G\prod I^{\diamond}(S)\right)\ar[rr]_{\pi_{S}}\ar[ll]^{\pi_{I(G)}}	&	&	S\\
}\]
Consequently, $I\left(\Phi_{I}\right)=id_{I(G)\times S}$.  On the other hand, applying $\check{V}$ gives the diagram below.
\[\xymatrix{
I(G)\ar[d]_{\varsigma_{G}}	&	&	I(G)\times S\ar[drr]^{\pi_{S}}\ar[ll]_{\pi_{I(G)}}\ar[d]_{\check{V}\left(\Phi_{I}\right)}\\
\check{V}(G)	&	&	\check{V}(G)\times S\ar[rr]_{\pi_{S}}\ar[ll]^{\pi_{\check{V}(G)}}	&	&	S\\
}\]
Thus, $\check{V}\left(\Phi_{I}\right)=\varsigma_{G}\times id_{S}$, and $\check{E}\left(\Phi_{I}\right)=\omega_{G}\times id_{S}$ by a similar argument.  Note that $\Phi_{I}$ is not monic when $G=I^{\star}\left([2]\right)$ and $S=[2]$.  Therefore, $I$ is not logical.

The geometric morphism $\left(I^{\diamond},I,I^{\star}\right)$ is an embedding but not a surjection by the same proof as Lemma \ref{ve-geometric}.

\qed \end{proof}

Next, consider the incidence hypergraph exponential, which is easily constructed.  As $\cat{R}$ is known to be a topos, the exponential $H^{G}$ is known to exist for all $G,H\in\ob(\cat{R})$.  As $\check{V}$ and $\check{E}$ are logical, one has that $\check{V}\left(H^{G}\right)\cong_{\Set}{\check{V}(H)}^{\check{V}(G)}=\Set\left(\check{V}(G),\check{V}(H)\right)$ and $\check{E}\left(H^{G}\right)\cong_{\Set}{\check{E}(H)}^{\check{E}(G)}=\Set\left(\check{E}(G),\check{E}(H)\right)$.  Using Lemma \ref{i-notlogical}, and the fact that $I^{\diamond}(\{1\})\cong_{\cat{R}}I^{\star}(\{1\})$ is terminal, one has the following bijection:
\[\begin{array}{rcl}
I\left(H^{G}\right)
&	\cong_{\Set}	&	\Set\left(\{1\},I\left(H^{G}\right)\right)
\cong_{\Set}\cat{R}\left(I^{\diamond}\left(\{1\}\right),H^{G}\right)
\cong_{\Set}\cat{R}\left(G{\prod}^{\cat{R}}I^{\diamond}\left(\{1\}\right),H\right)\\
&	\cong_{\Set}	&	\cat{R}\left(G,H\right).\\
\end{array}\]
Thus, the exponential of an incidence hypergraph can be constructed as described below.

\begin{defn}[Incidence hypergraph exponential]
Given incidence hypergraphs $G,H$, define the incidence hypergraph $H^{G}$ in the following way:
\begin{itemize}
\item $\check{V}\left(H^{G}\right):=\Set\left(\check{V}(G),\check{V}(H)\right)$, $\check{E}\left(H^{G}\right):=\Set\left(\check{E}(G),\check{E}(H)\right)$, $I\left(H^{G}\right):=\cat{R}(G,H)$,
\item $\varsigma_{H^{G}}(\phi):=\check{V}(\phi)$, $\omega_{H^{G}}(\phi):=\check{E}(\phi)$.
\end{itemize}
Define $\xymatrix{G{\prod}^{\cat{R}}H^{G}\ar[r]^(0.7){\epsilon_{H}^{G}} & H}\in\cat{R}$ by $\check{V}\left(\epsilon_{H}^{G}\right)(v,f):=f(v)$, $\check{E}\left(\epsilon_{H}^{G}\right)(e,g):=g(e)$, $I\left(\epsilon_{H}^{G}\right)(i,\phi):=I(\phi)(i)$.
\end{defn}

\begin{thm}[Universal property of the exponential, $\cat{R}$]\label{exponential-r}
Given $\xymatrix{G{\prod}^{\cat{R}}K\ar[r]^(0.65){\psi} & H}\in\cat{R}$, there is a unique $\xymatrix{K\ar[r]^{\hat{\psi}} & H^{G}}\in\cat{R}$ such that $\epsilon_{H}^{G}\circ\left(G{\prod}^{\cat{R}}\hat{\psi}\right)=\psi$.
\end{thm}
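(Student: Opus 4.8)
The plan is to curry $\psi$ componentwise, exploiting that $\cat{R}=\Set^{\cat{D}}$ is a presheaf topos so that the product $G\prod^{\cat{R}}K$ is computed pointwise: $\check{V}\left(G\prod^{\cat{R}}K\right)=\check{V}(G)\times\check{V}(K)$, $\check{E}\left(G\prod^{\cat{R}}K\right)=\check{E}(G)\times\check{E}(K)$, and $I\left(G\prod^{\cat{R}}K\right)=I(G)\times I(K)$, with the port and attachment functions acting coordinatewise. This lets me treat elements of the three evaluation sets of the product as ordered pairs, so that $\check{V}(\psi)$, $\check{E}(\psi)$, and $I(\psi)$ become functions of two arguments.

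First I would define $\hat{\psi}$ on the vertex and edge components by ordinary currying: for $k\in\check{V}(K)$ set $\check{V}(\hat{\psi})(k):=\big(v\mapsto\check{V}(\psi)(v,k)\big)\in\Set\left(\check{V}(G),\check{V}(H)\right)=\check{V}\left(H^{G}\right)$, and for $l\in\check{E}(K)$ set $\check{E}(\hat{\psi})(l):=\big(e\mapsto\check{E}(\psi)(e,l)\big)$. The incidence component is the crux, since $I\left(H^{G}\right)=\cat{R}(G,H)$ is a set of \emph{morphisms}, not functions: for each $j\in I(K)$ I must manufacture an entire homomorphism $\phi_{j}:G\to H$. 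I would define its three components by $\check{V}(\phi_{j})(v):=\check{V}(\psi)(v,\varsigma_{K}(j))$, $\check{E}(\phi_{j})(e):=\check{E}(\psi)(e,\omega_{K}(j))$, and $I(\phi_{j})(i):=I(\psi)(i,j)$, then set $I(\hat{\psi})(j):=\phi_{j}$.

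The main obstacle is verifying that each $\phi_{j}$ really is a morphism in $\cat{R}$, i.e.\ that it commutes with the port and attachment functions. Here I would use that $\psi$ is itself a homomorphism, so $\varsigma_{H}\circ I(\psi)=\check{V}(\psi)\circ\varsigma_{G\prod^{\cat{R}}K}$, combined with the coordinatewise port function $\varsigma_{G\prod^{\cat{R}}K}(i,j)=(\varsigma_{G}(i),\varsigma_{K}(j))$; chasing $i\in I(G)$ through both sides gives $\varsigma_{H}(I(\phi_{j})(i))=\check{V}(\psi)(\varsigma_{G}(i),\varsigma_{K}(j))=\check{V}(\phi_{j})(\varsigma_{G}(i))$, and symmetrically for $\omega$. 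With $\phi_{j}$ established as a morphism, checking that $\hat{\psi}$ is itself a morphism in $\cat{R}$ reduces, because $\varsigma_{H^{G}}(\phi)=\check{V}(\phi)$ and $\omega_{H^{G}}(\phi)=\check{E}(\phi)$, to the identities $\check{V}(\phi_{j})=\check{V}(\hat{\psi})(\varsigma_{K}(j))$ and $\check{E}(\phi_{j})=\check{E}(\hat{\psi})(\omega_{K}(j))$, both immediate from the definitions.

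Finally I would confirm the factorization and uniqueness. Since $G\prod^{\cat{R}}\hat{\psi}$ acts as the identity on the $G$-coordinate and as $\hat{\psi}$ on the other, a three-component chase through the defining formulas for $\epsilon_{H}^{G}$ sends on vertices $(v,k)\mapsto\check{V}(\hat{\psi})(k)(v)=\check{V}(\psi)(v,k)$, on edges the analogous equality, and on incidences $(i,j)\mapsto I(\phi_{j})(i)=I(\psi)(i,j)$; hence $\epsilon_{H}^{G}\circ\left(G\prod^{\cat{R}}\hat{\psi}\right)=\psi$. For uniqueness, any $\chi$ satisfying the same equation must, by reading off the three evaluation components of $\epsilon_{H}^{G}\circ\left(G\prod^{\cat{R}}\chi\right)=\psi$, satisfy $\check{V}(\chi)(k)(v)=\check{V}(\psi)(v,k)$, $\check{E}(\chi)(l)(e)=\check{E}(\psi)(e,l)$, and $I(I(\chi)(j))(i)=I(\psi)(i,j)$; the first two pin down $\check{V}(\chi)$ and $\check{E}(\chi)$, while the third pins down the incidence map of $I(\chi)(j)$, and the morphism condition on $\chi$ forces the vertex and edge maps of $I(\chi)(j)$ to agree with those of $\phi_{j}$. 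Thus $\chi=\hat{\psi}$.
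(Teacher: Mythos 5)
Your proposal is correct and takes essentially the same route as the paper: both curry $\psi$ componentwise, with vertices and edges curried as ordinary functions and each incidence $j\in I(K)$ sent to the $\cat{R}$-morphism $G\to H$ whose components are $\check{V}(\psi)(-,\varsigma_{K}(j))$, $\check{E}(\psi)(-,\omega_{K}(j))$, and $I(\psi)(-,j)$. The only difference is packaging: the paper manufactures this morphism as the composite $\psi\circ\left(G{\prod}^{\cat{R}}\hat{\alpha}_{j}\right)\circ\rho_{G}^{-1}$, using the adjunction $I^{\diamond}\dashv I$ applied to a singleton together with the right unitor, so that it is a morphism automatically, whereas you verify the port/attachment compatibility by an element chase; your write-up also records the factorization and uniqueness verifications (including the subtle point that the morphism condition on a competing $\chi$ pins down the vertex and edge components of $I(\chi)(j)$), which the paper leaves implicit.
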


\begin{proof}
Let $\xymatrix{G{\prod}^{\cat{R}}I^{\diamond}(\{1\})\ar[r]^(0.7){\rho_{G}} & G}\in\cat{R}$ be the right unitor isomorphism for the cartesian monoidal structure on $\cat{R}$, which is given by $(v,1)\mapsto v$, $(e,1)\mapsto e$, $(i,1)\mapsto i$.  Given $j\in I(K)$, define $\alpha_{j}:\{1\}\to I(K)$ by $\alpha_{j}(1):=j$.  There is a unique $\xymatrix{I^{\diamond}(\{1\})\ar[r]^(0.6){\hat{\alpha}_{j}} & K}\in\cat{R}$ such that $I\left(\hat{\alpha}_{j}\right)=\alpha_{j}$.  Define $\xymatrix{K\ar[r]^{\hat{\psi}} & H^{G}}\in\cat{R}$ by
\begin{itemize}
\item $\check{V}\left(\hat{\psi}\right)(w)(v):=\check{V}(\psi)(v,w)$, $\check{E}\left(\hat{\psi}\right)(f)(e):=\check{E}(\psi)(e,f)$,
\item $I\left(\hat{\psi}\right)(j):=\psi\circ\left(G{\prod}^{\cat{R}}\hat{\alpha}_{j}\right)\circ\rho_{G}^{-1}$.
\end{itemize}
\qed \end{proof}

Turning attention to $\cat{Q}$, one can repeat Lemmas \ref{ve-geometric} and \ref{i-notlogical} to yield the analogous results.

\begin{lem}[Structure of $\overrightarrow{V}$]
For $Q\in\ob(\cat{Q})$ and $S\in\ob(\Set)$, the Frobenius morphism $\xymatrix{\overrightarrow{V}^{\diamond}\left(\overrightarrow{V}(Q)\times S\right)\ar[r]^(0.6){\varphi_{V}} & Q\prod\overrightarrow{V}^{\diamond}(S)}\in\cat{Q}$ is given by $\varphi_{V}=\left(id_{\overrightarrow{V}(Q)\times S},id_{\emptyset}\right)$.  Moreover, $\left(\overrightarrow{V}^{\diamond},\overrightarrow{V},\overrightarrow{V}^{\star}\right)$ is an atomic geometric embedding from $\Set$ to $\cat{Q}$, which is not a surjection.  Consequently, $\Set$ is equivalent to the slice category $\cat{Q}/\overrightarrow{V}^{\diamond}(\{1\})$.
\end{lem}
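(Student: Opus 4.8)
The plan is to transcribe the proof of Lemma~\ref{ve-geometric} essentially verbatim, specialising every computation from the three evaluation functors on $\cat{R}$ to the single functor $\overrightarrow{V}$ on $\cat{Q}$. This is legitimate because the object $0$ of $\cat{E}$ receives no non-identity morphisms, exactly as the sink objects $0,1$ do in $\cat{D}$, and it is precisely that ``sink'' feature which made $\check{V}$ and $\check{E}$ logical while $I$ was not.

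First I would pin down $\varphi_{V}$ by evaluation. Since products in the presheaf topos $\cat{Q}$ are computed componentwise by \cite[Proposition~I.2.15.1]{borceux}, and $\overrightarrow{E}\,\overrightarrow{V}^{\diamond}(X)=\emptyset$ for every $X$, both $\overrightarrow{V}^{\diamond}\!\left(\overrightarrow{V}(Q)\times S\right)$ and $Q\prod\overrightarrow{V}^{\diamond}(S)$ have vertex set $\overrightarrow{V}(Q)\times S$ and empty edge set, forcing $\overrightarrow{E}\left(\varphi_{V}\right)=id_{\emptyset}$. Applying $\overrightarrow{V}$ to the defining diagram for the Frobenius (Beck--Chevalley) morphism from \cite[Lemma~A1.5.8]{elephant1} then reduces, just as in Lemma~\ref{ve-geometric}, to the assertion that $\overrightarrow{V}\left(\varphi_{V}\right)$ is the unique map compatible with the two product projections, namely $id_{\overrightarrow{V}(Q)\times S}$. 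Hence $\varphi_{V}=\left(id_{\overrightarrow{V}(Q)\times S},id_{\emptyset}\right)$ is an isomorphism, so $\overrightarrow{V}$ is logical; since a geometric morphism is atomic exactly when its inverse image is logical, this gives the atomicity.

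Next I would establish the embedding and the failure of surjectivity. The unit $\eta_{S}\colon S\to\overrightarrow{V}\,\overrightarrow{V}^{\diamond}(S)$ of the adjunction $\overrightarrow{V}^{\diamond}\dashv\overrightarrow{V}$ is $id_{S}$, because $\overrightarrow{V}\,\overrightarrow{V}^{\diamond}(S)=S$, so $\overrightarrow{V}^{\diamond}$ is full and faithful by \cite[Proposition~I.3.4.1]{borceux} and $\left(\overrightarrow{V}^{\diamond},\overrightarrow{V},\overrightarrow{V}^{\star}\right)$ is a geometric embedding. For non-surjectivity I would reuse the swap trick: taking the transposition $f\colon[2]\to[2]$ with $f(1):=2$, $f(2):=1$ and the bouquet $\overrightarrow{E}^{\star}([2])=\left(\{1\},[2],\mathbf{1}_{[2]},\mathbf{1}_{[2]}\right)$, one has $\overrightarrow{V}\,\overrightarrow{E}^{\star}(f)=id_{\{1\}}=\overrightarrow{V}\!\left(id_{\overrightarrow{E}^{\star}([2])}\right)$ while $\overrightarrow{E}^{\star}(f)\neq id$, so $\overrightarrow{V}$ is not faithful and the morphism is not a surjection.

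Finally, the slice equivalence is a formal consequence: $\overrightarrow{V}^{\diamond}(\{1\})=\left(\{1\},\emptyset,\mathbf{0},\mathbf{0}\right)$ is exactly the representable presheaf $\cat{E}(0,-)$ at $0\in\cat{E}$, and the category of elements of a representable is the one-object category $\cat{1}$, whence $\cat{Q}/\overrightarrow{V}^{\diamond}(\{1\})\simeq\Set^{\cat{1}}=\Set$; combinatorially this just records that an object over the single isolated vertex must have empty edge set and so is a bare vertex set. I expect the only delicate point to be the verification of the precise form of $\varphi_{V}$, i.e.\ unwinding the Frobenius morphism of \cite[Lemma~A1.5.8]{elephant1} and checking the induced $\Set$-diagram; everything else is a routine specialisation of Lemma~\ref{ve-geometric}.
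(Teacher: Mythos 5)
Your proof is correct and takes the same route as the paper, which offers no separate argument for this lemma beyond the remark that one can repeat Lemma~\ref{ve-geometric}: your Frobenius computation, the unit argument for the embedding, and the loop-swapping counterexample to faithfulness are exactly that adaptation, and your direct verification of the slice equivalence via the representable $\cat{E}(0,-)$ is a valid (and more explicit) substitute for invoking \cite[Proposition A2.3.8]{elephant1}, which is how the paper handles the analogous claim in Theorem~\ref{natureofupsilon}. One minor slip in your opening motivation: the object $0$ of $\cat{E}$ \emph{emits} no non-identity morphisms (it is the common target of $s$ and $t$), so ``receives'' should read ``emits''; your subsequent identification of $0$ as a sink, and all of the actual computations, are unaffected by this.
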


\begin{lem}[Structure of $\overrightarrow{E}$]
For $Q\in\ob(\cat{Q})$ and $S\in\ob(\Set)$, the Frobenius morphism $\xymatrix{\overrightarrow{E}^{\diamond}\left(\overrightarrow{E}(Q)\times S\right)\ar[r]^(0.6){\varphi_{E}} & Q\prod\overrightarrow{E}^{\diamond}(S)}\in\cat{Q}$ is given by $\varphi_{E}=\left(\left(\sigma_{Q}\times\varpi_{1}\right)\sqcup\left(\tau_{Q}\times\varpi_{2}\right),id_{\overrightarrow{E}(Q)\times S}\right)$.  Moreover, $\left(\overrightarrow{E}^{\diamond},\overrightarrow{E},\overrightarrow{E}^{\star}\right)$ is an essential geometric embedding from $\Set$ to $\cat{Q}$, which is neither a surjection nor atomic.
\end{lem}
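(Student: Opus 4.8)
The plan is to follow the template of Lemmas \ref{ve-geometric} and \ref{i-notlogical}, using that $\overrightarrow{V}$ and $\overrightarrow{E}$ are jointly faithful evaluation functors, so that $\varphi_{E}$ is completely pinned down by the two images $\overrightarrow{E}\left(\varphi_{E}\right)$ and $\overrightarrow{V}\left(\varphi_{E}\right)$. First I would record the evaluation functors on both ends of $\varphi_{E}$. Writing $X:=\overrightarrow{E}(Q)\times S$ and using $\overrightarrow{E}^{\diamond}(X)=\left(\{0,1\}\times X,X,\varpi_{1},\varpi_{2}\right)$ together with the pointwise computation of products in $\cat{Q}$ from \cite[Proposition I.2.15.1]{borceux}, one obtains $\overrightarrow{E}\,\overrightarrow{E}^{\diamond}(X)=X=\overrightarrow{E}\left(Q\prod\overrightarrow{E}^{\diamond}(S)\right)$, while $\overrightarrow{V}\,\overrightarrow{E}^{\diamond}(X)=\{0,1\}\times\overrightarrow{E}(Q)\times S$ and $\overrightarrow{V}\left(Q\prod\overrightarrow{E}^{\diamond}(S)\right)=\overrightarrow{V}(Q)\times\left(\{0,1\}\times S\right)$. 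This already exhibits the asymmetry with the vertex case: the two vertex sets genuinely differ, so $\overrightarrow{V}\left(\varphi_{E}\right)$ must carry nontrivial content.

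Next I would apply $\overrightarrow{E}$ and then $\overrightarrow{V}$ to the defining diagram for the Frobenius morphism from \cite[Lemma A1.5.8]{elephant1}, exactly as in Lemma \ref{i-notlogical}. Applying $\overrightarrow{E}$ collapses the counit of $\overrightarrow{E}^{\diamond}\dashv\overrightarrow{E}$ to an identity (by the triangle identity, since the unit is $\eta_{S}=id_{S}$), forcing $\overrightarrow{E}\left(\varphi_{E}\right)=id_{\overrightarrow{E}(Q)\times S}$. The crux is the $\overrightarrow{V}$ computation: here the counit $\epsilon_{Q}:\overrightarrow{E}^{\diamond}\overrightarrow{E}(Q)\to Q$ acts on vertices by $(0,e)\mapsto\sigma_{Q}(e)$ and $(1,e)\mapsto\tau_{Q}(e)$, since $\overrightarrow{E}^{\diamond}\overrightarrow{E}(Q)$ is the disjoint union of $1$-paths whose source and target copies are folded onto $Q$ through $\sigma_{Q}$ and $\tau_{Q}$. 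Tracking this through the diagram yields $\overrightarrow{V}\left(\varphi_{E}\right)(n,e,s)=\left(\overrightarrow{V}(\epsilon_{Q})(n,e),(n,s)\right)$, which is precisely $\left(\sigma_{Q}\times\varpi_{1}\right)\sqcup\left(\tau_{Q}\times\varpi_{2}\right)$ once the domain is split along the $\{0,1\}$-coordinate. I expect this identification of the counit on vertices to be the main obstacle, as it is the exact point at which $\overrightarrow{E}$ departs from the well-behaved $\overrightarrow{V}$.

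From the explicit form of $\overrightarrow{V}\left(\varphi_{E}\right)$ the remaining claims fall out. Since monomorphisms in $\cat{Q}$ are pointwise injective, $\varphi_{E}$ is monic only when both $\sigma_{Q}$ and $\tau_{Q}$ are injective; choosing $Q=\overrightarrow{E}^{\star}\left([2]\right)$, a single vertex with two loops, and $S=\{1\}$ makes $\overrightarrow{V}\left(\varphi_{E}\right)$ non-injective, so $\varphi_{E}$ is not an isomorphism and $\overrightarrow{E}$ is not logical; hence the geometric morphism is not atomic. Finally, I would argue as in Lemma \ref{ve-geometric} that $\left(\overrightarrow{E}^{\diamond},\overrightarrow{E},\overrightarrow{E}^{\star}\right)$ is an embedding but not a surjection: the unit $\eta_{S}=id_{S}$ shows $\overrightarrow{E}^{\diamond}$ is full and faithful by \cite[Proposition I.3.4.1]{borceux}, giving the embedding, whereas $\overrightarrow{E}$ fails to be faithful, since for the two maps $f,g:\{0\}\to\{0,1\}$ one has $\overrightarrow{V}^{\diamond}(f)\neq\overrightarrow{V}^{\diamond}(g)$ yet $\overrightarrow{E}\,\overrightarrow{V}^{\diamond}(f)=\overrightarrow{E}\,\overrightarrow{V}^{\diamond}(g)=id_{\emptyset}$, precluding surjectivity.
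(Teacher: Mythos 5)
Your proposal is correct and takes essentially the same approach as the paper, whose entire proof of this lemma is the remark that one can repeat Lemmas \ref{ve-geometric} and \ref{i-notlogical}: you compute the Frobenius morphism componentwise through the evaluation functors and the counit of $\overrightarrow{E}^{\diamond}\dashv\overrightarrow{E}$, exhibit a non-monic instance (at $Q=\overrightarrow{E}^{\star}([2])$, the analogue of the paper's $I^{\star}([2])$ counterexample) to rule out atomicity, and obtain the embedding from $\eta_{S}=id_{S}$ and the failure of surjectivity from the non-faithfulness of $\overrightarrow{E}$, exactly as in those lemmas. The only cosmetic differences (using the triangle identity rather than a diagram chase for the edge component, and $\overrightarrow{V}^{\diamond}$ rather than a $\star$-construction to witness non-faithfulness) do not change the argument.
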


On the other hand, the functor $\Upsilon$ itself is a component of a geometric morphism, and is also logical, which blends $\cat{Q}$ and $\cat{R}$ deeply together.

\begin{thm}[Nature of $\Upsilon$]\label{natureofupsilon}
For $Q\in\ob(\cat{Q})$ and $G\in\ob(\cat{R})$, the Frobenius morphism $\xymatrix{\Upsilon^{\diamond}\left(\Upsilon(Q)\prod G\right)\ar[r]^(0.6){\Phi} & Q\prod\Upsilon^{\diamond}(G)}\in\cat{Q}$ is given by
\begin{itemize}
\item $\overrightarrow{V}(\Phi)(n,v,x)=(v,n,x)$,
\item $\overrightarrow{E}(\Phi)=id_{\overrightarrow{E}(Q)\times I(G)}$.
\end{itemize}
Moreover, $\left(\Upsilon^{\diamond},\Upsilon,\Upsilon^{\star}\right)$ is an atomic geometric surjection from $\cat{R}$ to $\cat{Q}$, which is not an embedding.  Furthermore, $\Upsilon^{\diamond}$ is faithful, so $\cat{R}$ is equivalent to the slice category $\cat{Q}/\overrightarrow{E}^{\diamond}(\{1\})$.
\end{thm}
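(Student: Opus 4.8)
The plan is to compute the Frobenius morphism $\Phi$ explicitly from its defining universal property, read off that it is an isomorphism, and then harvest the four assertions (atomic, surjection, not an embedding, slice equivalence) in turn.

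First I would recall that, for the adjunction $\Upsilon^{\diamond}\dashv\Upsilon$ with counit $\varepsilon\colon\Upsilon^{\diamond}\Upsilon\Rightarrow id_{\cat{Q}}$, the canonical map of \cite[Lemma A1.5.8]{elephant1} is the pairing
\[
\Phi=\left\langle \varepsilon_{Q}\circ\Upsilon^{\diamond}(\pi_{1}),\ \Upsilon^{\diamond}(\pi_{2})\right\rangle\colon\Upsilon^{\diamond}\left(\Upsilon(Q)\prod G\right)\to Q\prod\Upsilon^{\diamond}(G),
\]
where $\pi_{1},\pi_{2}$ are the projections out of $\Upsilon(Q)\prod G$ in $\cat{R}$. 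Since products in $\cat{R}=\Set^{\cat{D}}$ and $\cat{Q}=\Set^{\cat{E}}$ are pointwise, I can evaluate $\check{V},\check{E},I$ of $\Upsilon(Q)\prod G$ as $\overrightarrow{V}(Q)\times\check{V}(G)$, $\overrightarrow{V}(Q)\times\check{E}(G)$, and $\overrightarrow{E}(Q)\times I(G)$, then push these through $\overrightarrow{V}\Upsilon^{\diamond}(-)=\check{V}(-)\sqcup\check{E}(-)$ and $\overrightarrow{E}\Upsilon^{\diamond}(-)=I(-)$. On the edge coordinate $\Upsilon^{\diamond}(\pi_{2})$ restricts to the projection $\overrightarrow{E}(Q)\times I(G)\to I(G)$ and $\varepsilon_{Q}$ acts as the identity, giving $\overrightarrow{E}(\Phi)=id_{\overrightarrow{E}(Q)\times I(G)}$. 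On the vertex coordinate the counit $\varepsilon_{Q}$ is the fold map collapsing the two copies of $\overrightarrow{V}(Q)$ coming from $\check{V}\Upsilon(Q)=\check{E}\Upsilon(Q)=\overrightarrow{V}(Q)$, which yields $\overrightarrow{V}(\Phi)(n,v,x)=(v,n,x)$. This is exactly the stated formula, and it is visibly the canonical distributivity bijection $(\overrightarrow{V}(Q)\times\check{V}(G))\sqcup(\overrightarrow{V}(Q)\times\check{E}(G))\cong\overrightarrow{V}(Q)\times(\check{V}(G)\sqcup\check{E}(G))$; together with $\overrightarrow{E}(\Phi)=id$ it shows each component of $\Phi$ is a bijection, so $\Phi$ is an isomorphism in $\cat{Q}$.

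Next I would extract the topos-theoretic conclusions. Because $\Upsilon$ has a left adjoint $\Upsilon^{\diamond}$, invertibility of every $\Phi$ is precisely Frobenius reciprocity, so \cite[Lemma A1.5.8]{elephant1} makes $\Upsilon$ logical; since $\Upsilon$ is a right adjoint it also preserves finite limits, so $(\Upsilon^{\diamond},\Upsilon,\Upsilon^{\star})$ is a geometric morphism $\cat{R}\to\cat{Q}$ with logical inverse image, i.e.\ it is atomic. For the surjection claim I would check that $\Upsilon$ is faithful: a morphism of $\cat{Q}$ is determined by its vertex and edge functions, recovered as $\check{V}\Upsilon(\phi)$ and $I\Upsilon(\phi)$, so $\Upsilon(\phi)=\Upsilon(\psi)$ forces $\phi=\psi$. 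Finally, a geometric morphism that is simultaneously a surjection and an embedding is an equivalence; but $\cat{R}$ needs three generators (Theorem \ref{generation-R}) while $\cat{Q}$ needs only two (Theorem \ref{generation-Q}), so the topoi are inequivalent and the morphism cannot be an embedding.

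For the last assertion I would note that $\Upsilon^{\diamond}$ is faithful, since a map of $\cat{R}$ is a triple of vertex, edge, and incidence functions, all three surviving inside $\Upsilon^{\diamond}(\phi)=(\check{V}(\phi)\sqcup\check{E}(\phi),I(\phi))$. I would then identify the base object $\Upsilon^{\diamond}(1_{\cat{R}})$, where $1_{\cat{R}}$ is the constant functor at a point; unwinding the formula gives two vertices, one edge, and source/target landing in the two distinct vertices, i.e.\ $\Upsilon^{\diamond}(1_{\cat{R}})=\overrightarrow{E}^{\diamond}(\{1\})$. The comparison functor $\cat{R}\to\cat{Q}/\overrightarrow{E}^{\diamond}(\{1\})$ sends $G$ to $\Upsilon^{\diamond}(G)$ with structure map $\Upsilon^{\diamond}(!_{G})$, which colours the $\check{V}(G)$-vertices by the source end and the $\check{E}(G)$-vertices by the target end. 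I would exhibit its inverse directly: an object of the slice is a quiver $P$ with a map $P\to\overrightarrow{E}^{\diamond}(\{1\})$, equivalently a partition of $\overrightarrow{V}(P)$ into a source class and a target class with every edge running from the former to the latter, and setting $\check{V}:=$ source class, $\check{E}:=$ target class, $I:=\overrightarrow{E}(P)$, $\varsigma:=\sigma_{P}$, $\omega:=\tau_{P}$ recovers an incidence hypergraph. These assignments are mutually inverse up to isomorphism, giving essential surjectivity and fullness, while faithfulness of $\Upsilon^{\diamond}$ supplies faithfulness; hence $\cat{R}\simeq\cat{Q}/\overrightarrow{E}^{\diamond}(\{1\})$. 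Abstractly this is the statement that $u\colon\cat{D}\to\cat{E}$ is a discrete opfibration whose fibre copresheaf is $\overrightarrow{E}^{\diamond}(\{1\})$.

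The main obstacle will be the first paragraph: keeping the disjoint-union and product bookkeeping straight through $\Upsilon^{\diamond}$, and in particular correctly recognizing that $\varepsilon_{Q}$ is the fold map on the doubled vertex set, so that $\overrightarrow{V}(\Phi)$ comes out as the asserted coordinate swap rather than some other reindexing. After that, the remaining claims are short: faithfulness of $\Upsilon$ and $\Upsilon^{\diamond}$ is immediate, ``not an embedding'' is a one-line consequence of the generation counts, and the only real content in the slice equivalence is that the explicit inverse is well defined and natural, which logicality of $\Upsilon$ and faithfulness of $\Upsilon^{\diamond}$ guarantee.
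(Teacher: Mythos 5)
Your proposal is correct, and on the core computation it follows the paper's route: the paper likewise evaluates the defining diagram of \cite[Lemma A1.5.8]{elephant1} under $\overrightarrow{E}$ and $\overrightarrow{V}$ to read off $\overrightarrow{E}(\Phi)=id_{\overrightarrow{E}(Q)\times I(G)}$ and $\overrightarrow{V}(\Phi)(n,v,x)=(v,n,x)$, concludes from the invertibility of $\Phi$ that $\Upsilon$ is logical (hence the morphism is atomic), and gets surjectivity from faithfulness of $\Upsilon$ via $I\Upsilon=\overrightarrow{E}$, $\check{V}\Upsilon=\check{E}\Upsilon=\overrightarrow{V}$, exactly as you do. You genuinely diverge in two places. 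First, for ``not an embedding'' the paper computes the counit $\Theta_{G}:\Upsilon\Upsilon^{\star}(G)\to G$ explicitly ($\check{V}(\Theta_{G})(v,e)=v$, $\check{E}(\Theta_{G})(v,e)=e$, $I(\Theta_{G})(v,i,e)=i$) and exhibits a concrete failure of monicity at $G=I^{\diamond}([2])$; you instead combine the surjection--inclusion factorization (a geometric morphism that is both is an equivalence) with the generator counts of Theorems \ref{generation-Q} and \ref{generation-R}. Both are valid: the paper's argument is self-contained and produces an explicit witness, while yours is shorter but leans on the inequivalence of the topoi, which the paper records separately just before this theorem. Second, for the slice equivalence the paper cites \cite[Proposition A2.3.8]{elephant1} (a logical functor with a faithful left adjoint exhibits its codomain as a slice of its domain over the image of the terminal object), whereas you identify $\Upsilon^{\diamond}(1_{\cat{R}})\cong\overrightarrow{E}^{\diamond}(\{1\})$ and construct the inverse equivalence by hand, reading objects of $\cat{Q}/\overrightarrow{E}^{\diamond}(\{1\})$ as bipartitioned quivers. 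That costs more verification (functoriality and naturality of your inverse) but buys combinatorial content the paper only states in its conclusion -- that an incidence hypergraph \emph{is} its bipartite incidence digraph -- and your closing remark that $\cat{D}\to\cat{E}$ is a discrete opfibration with fibre copresheaf $\overrightarrow{E}^{\diamond}(\{1\})$ is precisely the abstract reason both the equivalence and the logicality hold. One shared caveat: strictly speaking, \cite[Lemma A1.5.8]{elephant1} only yields that $\Upsilon$ is cartesian closed, and preservation of the subobject classifier requires a further (easy) check or the discrete-opfibration fact you mention; the paper makes the identical leap, so this is not a gap peculiar to your argument.
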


\begin{proof}

Observe that
\begin{itemize}
\item $\overrightarrow{E}\Upsilon^{\diamond}\left(\Upsilon(Q)\prod G\right)=\overrightarrow{E}(Q)\times I(G)=\overrightarrow{E}\left(Q\prod\Upsilon^{\diamond}(G)\right)$,
\item $\overrightarrow{V}\Upsilon^{\diamond}\left(\Upsilon(Q)\prod G\right)=\left(\{1\}\times\overrightarrow{V}(Q)\times\check{V}(G)\right)\cup\left(\{2\}\times\overrightarrow{V}(Q)\times\check{E}(G)\right)$,
\item $\overrightarrow{V}\left(Q\prod\Upsilon^{\diamond}(G)\right)=\overrightarrow{V}(Q)\times\left(\left(\{1\}\times\check{V}(G)\right)\cup\left(\{2\}\times\check{E}(G)\right)\right)$.
\end{itemize}
Applying $\overrightarrow{E}$ to the defining diagram for $\Phi$ from \cite[Lemma A1.5.8]{elephant1} yields the following diagram in $\Set$.
\[\xymatrix{
\overrightarrow{E}(Q)\ar[d]_{id_{\overrightarrow{E}(Q)}}	&	&	\overrightarrow{E}(Q)\times I(G)\ar[drr]^{\pi_{I(G)}}\ar[ll]_{\pi_{\overrightarrow{E}(Q)}}\ar[d]_{\overrightarrow{E}\left(\Phi\right)}\\
\overrightarrow{E}(Q)	&	&	\overrightarrow{E}\left(Q\prod\Upsilon^{\diamond}(G)\right)\ar[rr]_{\pi_{I(G)}}\ar[ll]^{\pi_{\overrightarrow{E}(Q)}}	&	&	I(G)\\
}\]
Consequently, $\overrightarrow{E}\left(\Phi\right)=id_{\overrightarrow{E}(Q)\times I(G)}$.  Applying $\overrightarrow{V}$ yields the following diagram, where $\overrightarrow{V}\left(\epsilon_{Q}\right)(n,v)=v$.
\[\xymatrix{
\{1,2\}\times\overrightarrow{V}(Q)\ar[d]_{\overrightarrow{V}\left(\epsilon_{Q}\right)}	&	&	\overrightarrow{V}\Upsilon^{\diamond}\left(\Upsilon(Q)\prod G\right)\ar[drr]^{\overrightarrow{V}\Upsilon^{\diamond}\left(\pi_{G}\right)}\ar[ll]_{\overrightarrow{V}\Upsilon^{\diamond}\left(\pi_{\Upsilon(G)}\right)}\ar[d]_{\overrightarrow{V}\left(\Phi\right)}\\
\overrightarrow{V}(Q)	&	&	\overrightarrow{V}\left(Q\prod\Upsilon^{\diamond}(G)\right)\ar[rr]_(0.4){\pi_{\check{V}\Upsilon^{\diamond}(G)}}\ar[ll]^{\pi_{\overrightarrow{V}(Q)}}	&	&	\left(\{1\}\times\check{V}(G)\right)\cup\left(\{2\}\times\check{E}(G)\right)\\
}\]
A diagram chase shows that $\overrightarrow{V}(\Phi)(n,v,x)=(v,n,x)$.  Thus, $\Phi$ is an isomorphism, so $\Upsilon$ is logical.

A quick check shows that $I\Upsilon=\overrightarrow{E}$ and $\check{V}\Upsilon=\check{E}\Upsilon=\overrightarrow{V}$.  If $\xymatrix{Q\ar@/^/[r]^{\alpha}\ar@/_/[r]_{\beta} & R}\in\cat{Q}$ satisfy that $\Upsilon(\alpha)=\Upsilon(\beta)$, then $\overrightarrow{V}(\alpha)=\check{V}\Upsilon(\alpha)=\check{V}\Upsilon(\beta)=\overrightarrow{V}(\beta)$ and $\overrightarrow{E}(\alpha)=I\Upsilon(\alpha)=I\Upsilon(\beta)=\overrightarrow{E}(\beta)$.  Thus, $\alpha=\beta$, showing $\Upsilon$ is faithful.  Therefore, $\left(\Upsilon^{\diamond},\Upsilon,\Upsilon^{\star}\right)$ is a surjection.

If $\xymatrix{G\ar@/^/[r]^{\alpha}\ar@/_/[r]_{\beta} & H}\in\cat{R}$ satisfy that $\Upsilon^{\diamond}(\alpha)=\Upsilon^{\diamond}(\beta)$, then
\begin{itemize}
\item $\check{V}(\alpha)=\overrightarrow{V}\Upsilon^{\diamond}(\alpha)\circ\varpi_{\check{V}(G)}=\overrightarrow{V}\Upsilon^{\diamond}(\beta)\circ\varpi_{\check{V}(G)}=\check{V}(\beta)$,
\item $\check{E}(\alpha)=\overrightarrow{V}\Upsilon^{\diamond}(\alpha)\circ\varpi_{\check{E}(G)}=\overrightarrow{V}\Upsilon^{\diamond}(\beta)\circ\varpi_{\check{E}(G)}=\check{E}(\beta)$,
\item $I(\alpha)=\overrightarrow{E}\Upsilon^{\diamond}(\alpha)=\overrightarrow{E}\Upsilon^{\diamond}(\beta)=I(\beta)$.
\end{itemize}
Thus, $\alpha=\beta$, showing that $\Upsilon^{\diamond}$ is faithful.  Therefore, $\cat{R}$ is equivalent to $\cat{Q}/\Upsilon^{\diamond}I^{\diamond}\left(\{1\}\right)\cong\cat{Q}/\overrightarrow{E}^{\diamond}\left(\{1\}\right)$ by \cite[Proposition A2.3.8]{elephant1}.

The counit $\xymatrix{\Upsilon\Upsilon^{\star}(G)\ar[r]^(0.6){\Theta_{G}} & G}\in\cat{R}$ is given by $\check{V}\left(\Theta_{G}\right)(v,e)=v$, $\check{E}\left(\Theta_{G}\right)(v,e)=e$, and $I\left(\Theta_{G}\right)(v,i,e)=i$.  Note that $\Theta_{G}$ is not monic when $G=I^{\diamond}\left([2]\right)$.  Therefore, $\left(\Upsilon^{\diamond},\Upsilon,\Upsilon^{\star}\right)$ is not an embedding.

\qed \end{proof}

An immediate corollary shows that the action of $\Upsilon^{\diamond}\Upsilon$ is identical to the product with $\overrightarrow{P}_{1}\cong_{\cat{Q}}\overrightarrow{E}^{\diamond}(\{1\})$, and the action of $\Upsilon^{\star}\Upsilon$ can be shown to be identical raising to $\overrightarrow{P}_{1}$.  Consequently, $\Upsilon^{\star}$ can be considered an extension of $\Box^{\overrightarrow{P}_{1}}$ to $\cat{R}$.

\begin{cor}[Actions of $\Upsilon^{\diamond}\Upsilon$ and $\Upsilon^{\star}\Upsilon$]\label{updiaup}
For $Q\in\ob(\cat{Q})$, one has the following natural isomorphisms:
\begin{enumerate}
\item $\Upsilon^{\diamond}\Upsilon(Q) \cong_{\cat{Q}} Q{\prod}^{\cat{Q}}\overrightarrow{P}_{1}$,
\item $\Upsilon^{\star}\Upsilon(Q) \cong_{\cat{Q}} Q^{\overrightarrow{P}_{1}}$.
\end{enumerate}
\end{cor}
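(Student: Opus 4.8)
The plan is to establish part (1) by a direct componentwise computation, and then to deduce part (2) formally from part (1) using the uniqueness of adjoint functors, so that no explicit description of the quiver exponential is required. This strategy matches the spirit of the corollary, whose point is precisely that the quiver exponential need not be constructed by hand.

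For part (1), I would first unwind the two functors on objects. Since $\Upsilon(Q)$ has $\check{V}\Upsilon(Q)=\check{E}\Upsilon(Q)=\overrightarrow{V}(Q)$, $I\Upsilon(Q)=\overrightarrow{E}(Q)$, $\varsigma_{\Upsilon(Q)}=\sigma_{Q}$, and $\omega_{\Upsilon(Q)}=\tau_{Q}$, applying the stated formula for $\Upsilon^{\diamond}$ gives $\Upsilon^{\diamond}\Upsilon(Q)=\left(\overrightarrow{V}(Q)\sqcup\overrightarrow{V}(Q),\overrightarrow{E}(Q),\varpi_{1}\circ\sigma_{Q},\varpi_{2}\circ\tau_{Q}\right)$. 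On the other side, since products in the presheaf topos $\cat{Q}=\Set^{\cat{E}}$ are computed componentwise by \cite[Proposition I.2.15.1]{borceux}, and $\overrightarrow{P}_{1}\cong_{\cat{Q}}\overrightarrow{E}^{\diamond}(\{1\})$ has two distinct vertices, one edge, and source and target hitting the two vertices, the product $Q{\prod}^{\cat{Q}}\overrightarrow{P}_{1}$ has vertex set $\overrightarrow{V}(Q)\times\{0,1\}$, edge set $\overrightarrow{E}(Q)$, source $e\mapsto(\sigma_{Q}(e),0)$, and target $e\mapsto(\tau_{Q}(e),1)$. The isomorphism then sends the first inclusion $\varpi_{1}$ onto $\overrightarrow{V}(Q)\times\{0\}$ and the second inclusion $\varpi_{2}$ onto $\overrightarrow{V}(Q)\times\{1\}$, and is the identity on edges; a short check shows it intertwines the two source maps and the two target maps. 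I would emphasize naturality: for $\phi$ in $\cat{Q}$, both $\Upsilon^{\diamond}\Upsilon(\phi)$ and $\phi{\prod}^{\cat{Q}}id_{\overrightarrow{P}_{1}}$ act as $\overrightarrow{V}(\phi)$ on each vertex copy and as $\overrightarrow{E}(\phi)$ on edges, so the isomorphism is natural in $Q$.

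For part (2), I would avoid computing $Q^{\overrightarrow{P}_{1}}$ directly and instead exploit the adjoint chain $\Upsilon^{\diamond}\dashv\Upsilon\dashv\Upsilon^{\star}$ recorded in Theorem \ref{natureofupsilon}. Composing the two hom-set bijections yields, naturally in $Q,R\in\ob(\cat{Q})$,
\[
\cat{Q}\left(\Upsilon^{\diamond}\Upsilon(Q),R\right)\cong\cat{R}\left(\Upsilon(Q),\Upsilon(R)\right)\cong\cat{Q}\left(Q,\Upsilon^{\star}\Upsilon(R)\right),
\]
so $\Upsilon^{\diamond}\Upsilon\dashv\Upsilon^{\star}\Upsilon$. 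By part (1) there is a natural isomorphism $\Upsilon^{\diamond}\Upsilon\cong\Box{\prod}^{\cat{Q}}\overrightarrow{P}_{1}$. Since $\cat{Q}$ is cartesian closed by \cite[Proposition A1.5.5]{elephant1}, the functor $\Box{\prod}^{\cat{Q}}\overrightarrow{P}_{1}$ has right adjoint $\Box^{\overrightarrow{P}_{1}}$ by the very definition of the exponential. As right adjoints of naturally isomorphic functors are themselves naturally isomorphic, I conclude $\Upsilon^{\star}\Upsilon\cong\Box^{\overrightarrow{P}_{1}}$, i.e. $\Upsilon^{\star}\Upsilon(Q)\cong_{\cat{Q}}Q^{\overrightarrow{P}_{1}}$ naturally.

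The only real obstacle is the bookkeeping of naturality in part (1): the uniqueness-of-adjoints step in part (2) needs the isomorphism $\Upsilon^{\diamond}\Upsilon\cong\Box{\prod}^{\cat{Q}}\overrightarrow{P}_{1}$ to be genuinely natural rather than merely objectwise, so some care is required to confirm that the chosen identification of the two copies of $\overrightarrow{V}(Q)$ with $\overrightarrow{V}(Q)\times\{0,1\}$ is respected by every quiver morphism. Everything else is either the formal adjunction calculus or routine verification against the explicit formulas for $\Upsilon^{\diamond}$, $\Upsilon^{\star}$, and componentwise products; in particular, no separate construction of the quiver exponential is needed, which is exactly the content the corollary is designed to deliver.
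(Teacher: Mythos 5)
Your proof is correct, but part (1) takes a genuinely different route from the paper, while part (2) is essentially the paper's own argument. For part (2) both you and the paper compose the adjunctions $\Upsilon^{\diamond}\dashv\Upsilon\dashv\Upsilon^{\star}$ with part (1) and the exponential adjunction, concluding by uniqueness of right adjoints; the paper writes this as the chain of natural bijections $\cat{Q}\left(K,\Upsilon^{\star}\Upsilon(Q)\right)\cong\cat{R}\left(\Upsilon(K),\Upsilon(Q)\right)\cong\cat{Q}\left(\Upsilon^{\diamond}\Upsilon(K),Q\right)\cong\cat{Q}\left(\overrightarrow{P}_{1}{\prod}^{\cat{Q}}K,Q\right)\cong\cat{Q}\left(K,Q^{\overrightarrow{P}_{1}}\right)$, which is the same argument phrased through Yoneda. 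For part (1), however, the paper never unwinds either side componentwise: it exploits Theorem \ref{natureofupsilon} by writing $\Upsilon^{\diamond}\Upsilon(Q)\cong\Upsilon^{\diamond}\left(\Upsilon(Q){\prod}^{\cat{R}}I^{\diamond}(\{1\})\right)$ (terminality of $I^{\diamond}(\{1\})$ in $\cat{R}$), applying the Frobenius isomorphism to obtain $Q{\prod}^{\cat{Q}}\Upsilon^{\diamond}I^{\diamond}(\{1\})$, and then using $\Upsilon^{\diamond}I^{\diamond}\cong\left(I\Upsilon\right)^{\diamond}=\overrightarrow{E}^{\diamond}$ together with $\overrightarrow{E}^{\diamond}(\{1\})\cong\overrightarrow{P}_{1}$. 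That derivation buys naturality for free, since every link in the chain is a natural isomorphism of functors, so the bookkeeping concern you flag at the end never arises; it also exhibits the corollary as a formal consequence of $\Upsilon$ being the inverse image of an atomic geometric morphism. Your componentwise computation is more elementary and self-contained---it uses nothing from Theorem \ref{natureofupsilon} beyond the adjunctions themselves---and it produces an explicit description of the isomorphism, at the price of having to verify naturality by hand; your check that both $\Upsilon^{\diamond}\Upsilon(\phi)$ and $\phi{\prod}^{\cat{Q}}id_{\overrightarrow{P}_{1}}$ act as $\overrightarrow{V}(\phi)$ on each copy of the vertex set and as $\overrightarrow{E}(\phi)$ on edges does settle that point, so your proof is complete as written.
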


\begin{proof}
Thus, the following isomorphisms are natural in $Q$ and $K$.\\
\textit{Proof of (1):}
\[\begin{array}{rcl}
\Upsilon^{\diamond}\Upsilon(Q)
&	\cong_{\cat{Q}}	&	\Upsilon^{\diamond}\left(\Upsilon(Q){\prod}^{\cat{R}}I^{\diamond}(\{1\})\right)
\cong_{\cat{Q}}Q{\prod}^{\cat{Q}}\Upsilon^{\diamond}I^{\diamond}(\{1\})
\cong_{\cat{Q}}Q{\prod}^{\cat{Q}}\left(I\Upsilon\right)^{\diamond}(\{1\})\\
&	\cong_{\cat{Q}}	&	Q{\prod}^{\cat{Q}}\overrightarrow{E}^{\diamond}(\{1\})
\cong_{\cat{Q}}Q{\prod}^{\cat{Q}}\overrightarrow{P}_{1}\\
\end{array}\]
\textit{Proof of (2):}
\[\begin{array}{rcl}
\cat{Q}\left(K,\Upsilon^{\star}\Upsilon(Q)\right)
&	\cong_{\Set}	&	\cat{Q}\left(\Upsilon(K),\Upsilon(Q)\right)
\cong_{\Set}\cat{Q}\left(\Upsilon^{\diamond}\Upsilon(K),Q\right)
\cong_{\Set}\cat{Q}\left(\overrightarrow{P}_{1}{\prod}^{\cat{Q}}K,Q\right)
\cong_{\Set}\cat{Q}\left(K,Q^{\overrightarrow{P}_{1}}\right)
\end{array}\]
\qed \end{proof}

Now, the quiver exponential can be constructed.  As $\cat{Q}$ is known to be a topos, the exponential $R^{Q}$ is known to exist for all $Q,R\in\ob(\cat{Q})$.  As $\overrightarrow{V}$ is logical, one has that $\overrightarrow{V}\left(R^{Q}\right)\cong_{\Set}{\overrightarrow{V}(R)}^{\overrightarrow{V}(Q)}=\Set\left(\overrightarrow{V}(Q),\overrightarrow{V}(R)\right)$.  Using Corollary \ref{updiaup}, one has the following bijection:
\[\begin{array}{rcl}
\overrightarrow{E}\left(R^{Q}\right)
&	\cong_{\Set}	&	\Set\left(\{1\},\overrightarrow{E}\left(R^{Q}\right)\right)
\cong_{\Set}\cat{Q}\left(\overrightarrow{E}^{\diamond}\left(\{1\}\right),R^{Q}\right)
\cong_{\Set}\cat{Q}\left(Q{\prod}^{\cat{Q}}\overrightarrow{E}^{\diamond}\left(\{1\}\right),R\right)\\
&	\cong_{\Set}	&	\cat{Q}\left(\Upsilon^{\diamond}\Upsilon(Q),R\right)
\cong_{\Set}\cat{R}\left(\Upsilon(Q),\Upsilon(R)\right)\\
\end{array}\]

This last calculation reveals a very peculiar truth.  The edges of the quiver exponential coincide with incidence hypergraph homomorphims involving $\Upsilon$, which might explain the difficulty in describing the quiver exponential solely by using only $\cat{Q}$.  Indeed, one can now concretely construct the quiver exponential in direct analogy to the exponential in $\cat{R}$.

\begin{defn}[Quiver exponential]\label{quiverexponential}
Given quivers $Q,R$, define the quiver $R^{Q}$ in the following way:
\begin{itemize}
\item $\overrightarrow{V}\left(R^{Q}\right):=\Set\left(\overrightarrow{V}(Q),\overrightarrow{V}(R)\right)$, $\overrightarrow{E}\left(R^{Q}\right):=\cat{R}\left(\Upsilon(Q),\Upsilon(R)\right)$,
\item $\sigma_{R^{Q}}(\phi):=\check{V}(\phi)$, $\tau_{R^{Q}}(\phi):=\check{E}(\phi)$.
\end{itemize}
Define $\xymatrix{Q{\prod}^{\cat{Q}}R^{G}\ar[r]^(0.7){\varepsilon_{R}^{Q}} & R}\in\cat{R}$ by $\overrightarrow{V}\left(\varepsilon_{R}^{Q}\right)(v,f):=f(v)$, $\overrightarrow{E}\left(\varepsilon_{R}^{Q}\right)(e,\phi):=I(\phi)(e)$.
\end{defn}

The proof of the following characterization is nearly identical to Theorem \ref{exponential-r} and will be omitted.

\begin{thm}[Universal property of the exponential, $\cat{Q}$]\label{qexponential}
Given $\xymatrix{Q{\prod}^{\cat{Q}}K\ar[r]^(0.65){\psi} & R}\in\cat{Q}$, there is a unique $\xymatrix{K\ar[r]^{\hat{\psi}} & R^{Q}}\in\cat{Q}$ such that $\varepsilon_{R}^{Q}\circ\left(Q{\prod}^{\cat{Q}}\hat{\psi}\right)=\psi$.
\end{thm}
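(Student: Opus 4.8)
The plan is to imitate the proof of Theorem~\ref{exponential-r} almost line for line, handling the vertex coordinate by ordinary set-currying and the edge coordinate through the adjunction $\Upsilon^{\diamond}\dashv\Upsilon$ of Theorem~\ref{natureofupsilon}.  On vertices, since $\overrightarrow{V}$ is logical we have $\overrightarrow{V}\left(R^{Q}\right)=\Set\left(\overrightarrow{V}(Q),\overrightarrow{V}(R)\right)$, the genuine set-exponential, so I would define $\overrightarrow{V}\left(\hat{\psi}\right)(w)(v):=\overrightarrow{V}(\psi)(v,w)$ for $w\in\overrightarrow{V}(K)$ and $v\in\overrightarrow{V}(Q)$, exactly as in the $\cat{R}$ case.

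The single genuine difference lies in the edge coordinate.  In Theorem~\ref{exponential-r} the object $I^{\diamond}(\{1\})$ is terminal, so the right unitor $\rho_{G}$ identifies $G{\prod}^{\cat{R}}I^{\diamond}(\{1\})$ with $G$ and does all the work; here the analogous generator $\overrightarrow{E}^{\diamond}(\{1\})\cong_{\cat{Q}}\overrightarrow{P}_{1}$ is \emph{not} terminal, so its place is taken by the natural isomorphism $Q{\prod}^{\cat{Q}}\overrightarrow{E}^{\diamond}(\{1\})\cong_{\cat{Q}}\Upsilon^{\diamond}\Upsilon(Q)$ of Corollary~\ref{updiaup}(1).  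Concretely, for an edge $\ell\in\overrightarrow{E}(K)$ I would first transpose it across $\overrightarrow{E}^{\diamond}\dashv\overrightarrow{E}$ to the unique arrow $\hat{\alpha}_{\ell}:\overrightarrow{E}^{\diamond}(\{1\})\to K$ with $\overrightarrow{E}\left(\hat{\alpha}_{\ell}\right)(1)=\ell$, form $\psi\circ\left(Q{\prod}^{\cat{Q}}\hat{\alpha}_{\ell}\right)$, precompose with the isomorphism above to obtain a morphism $\Upsilon^{\diamond}\Upsilon(Q)\to R$, and finally take its transpose under $\Upsilon^{\diamond}\dashv\Upsilon$ to land in $\cat{R}\left(\Upsilon(Q),\Upsilon(R)\right)=\overrightarrow{E}\left(R^{Q}\right)$.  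This composite is the value $\overrightarrow{E}\left(\hat{\psi}\right)(\ell)$.

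Three verifications then remain, each routine.  First, $\hat{\psi}$ is a quiver morphism: because $\sigma_{R^{Q}}(\phi)=\check{V}(\phi)$ and $\tau_{R^{Q}}(\phi)=\check{E}(\phi)$, the compatibility $\sigma_{R^{Q}}\circ\overrightarrow{E}\left(\hat{\psi}\right)=\overrightarrow{V}\left(\hat{\psi}\right)\circ\sigma_{K}$ together with its $\tau$-analogue reduces to matching the $\check{V}$- and $\check{E}$-components of the transposed incidence morphism against the vertex assignment, which follows from the identities $\check{V}\Upsilon=\check{E}\Upsilon=\overrightarrow{V}$ and $I\Upsilon=\overrightarrow{E}$ and the explicit counit $\Theta$ of Theorem~\ref{natureofupsilon}.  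Second, the factorization $\varepsilon_{R}^{Q}\circ\left(Q{\prod}^{\cat{Q}}\hat{\psi}\right)=\psi$ holds on vertices by the standard currying identity, and on edges by unwinding $\overrightarrow{E}\left(\varepsilon_{R}^{Q}\right)(e,\phi)=I(\phi)(e)$ against the definition of the transpose.  Third, uniqueness: a competing $\hat{\psi}'$ must agree on vertices by vertex currying, and on edges because every edge of $R^{Q}$ is classified by a morphism out of the generator $\overrightarrow{E}^{\diamond}(\{1\})$, so the factorization pins down $\psi\circ\left(Q{\prod}^{\cat{Q}}\hat{\alpha}_{\ell}\right)$ and hence, by the faithfulness of $\Upsilon^{\diamond}$ and the bijections of Corollary~\ref{updiaup}, the value $\overrightarrow{E}\left(\hat{\psi}\right)(\ell)$ itself.

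I expect the main obstacle to be precisely the bookkeeping in this edge coordinate.  Whereas terminality of $I^{\diamond}(\{1\})$ collapses the product in the $\cat{R}$ proof, here one must correctly thread the non-trivial isomorphism $Q{\prod}^{\cat{Q}}\overrightarrow{E}^{\diamond}(\{1\})\cong\Upsilon^{\diamond}\Upsilon(Q)$ through the adjunction transpose and then confirm that the resulting edge of $R^{Q}$ carries the prescribed source and target.  Once the explicit formulas for the Frobenius isomorphism $\Phi$, the counit $\Theta$, and the unit of $\Upsilon^{\diamond}\dashv\Upsilon$ from Theorem~\ref{natureofupsilon} are substituted, every check degenerates into a direct diagram chase in $\Set$, fully parallel to Theorem~\ref{exponential-r}.
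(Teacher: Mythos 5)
Your proposal is correct and is essentially the proof the paper intends: the paper omits the argument as ``nearly identical to Theorem~\ref{exponential-r},'' and your adaptation---currying on vertices, and on edges transposing across $\overrightarrow{E}^{\diamond}\dashv\overrightarrow{E}$, threading the isomorphism $Q{\prod}^{\cat{Q}}\overrightarrow{E}^{\diamond}(\{1\})\cong_{\cat{Q}}\Upsilon^{\diamond}\Upsilon(Q)$ of Corollary~\ref{updiaup}, and transposing under $\Upsilon^{\diamond}\dashv\Upsilon$---is precisely the chain of bijections the paper displays just before Definition~\ref{quiverexponential}. You also correctly isolate the single genuine deviation from the $\cat{R}$ argument, namely that $\overrightarrow{E}^{\diamond}(\{1\})$ is not terminal, so Corollary~\ref{updiaup}(1) must replace the right unitor $\rho_{G}$.
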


With the exponential objects concretely constructed and interwoven, $\Upsilon$ provides combinatorial context to quiver exponentials by passing to $\cat{R}$, as seen in Theorem \ref{natureofupsilon}. Contrast this to the classical definition of the exponential of simple digraphs.

\begin{defn}[{Classical digraph exponential, \cite[p.\ 46]{GandH}}]
Given simple digraphs $Q$ and $R$, the digraph exponential $[Q,R]$ is defined by
\begin{itemize}
\item $\overrightarrow{V}[Q,R]:=\Set\left(\overrightarrow{V}(Q),\overrightarrow{V}(R)\right)$,
\item $\overrightarrow{E}[Q,R]:=\left\{(f,g)\in\overrightarrow{V}[Q,R]\times\overrightarrow{V}[Q,R]:\left(f(v),g(v)\right)\in\overrightarrow{E}(R)\forall v\in\overrightarrow{V}(Q)\right\}$,
\item $\sigma_{[Q,R]}(f,g):=f$, $\tau_{[Q,R]}(f,g):=g$.
\end{itemize}
The operation $[\cdot,\cdot\cdot]$ is well-known to be the exponential in the category of simple digraphs \cite[Corollary 2.18 \& Proposition 2.19]{GandH}.
\end{defn}

Be aware that the above construction only yields a simple digraph, as opposed to the Definition \ref{quiverexponential}.

Using the classical definition of the quiver exponential $\left[\overrightarrow{P}_{1},\overrightarrow{P}_{1}\right]$ with base vertices $\{u,v\}$ and exponent vertices $\{x,y\}$. The vertices correspond to the morphisms $f_1:x \rightarrow u, y\rightarrow u$; $f_2:x \rightarrow u, y \rightarrow v$; $f_3:x \rightarrow v,y \rightarrow u$; and $f_4:x \rightarrow v,y \rightarrow v$. While the edges are $(f_1,f_2):(x,y) \rightarrow (u,v)$; $(f_1,f_4):(x,y) \rightarrow (u,v)$; $(f_2,f_4):(x,y) \rightarrow (u,v)$; $(f_2,f_2):(x,y) \rightarrow (u,v)$. This is depicted in Figure \ref{ClassicalQuiverExp}.

\begin{figure}[H]
\centering
\includegraphics[scale=1]{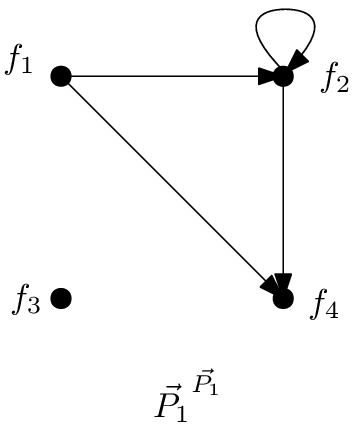}
\caption{The classical quiver exponential of a directed path to itself}
\label{ClassicalQuiverExp}
\end{figure}

However, the edges of the quiver exponential manifest as homomorphisms in $\cat{R}$. As an illustration, consider the action of $\Upsilon$ on the two paths from the classical quiver exponential example and consider all homomorphisms from $\Upsilon\left({\overrightarrow{P}_{1}}\right) \rightarrow \Upsilon\left({\overrightarrow{P}_{1}}\right)$, as in Figure \ref{UpsilonQuiverExp}.

\begin{figure}[H]
\centering
\includegraphics[scale=1]{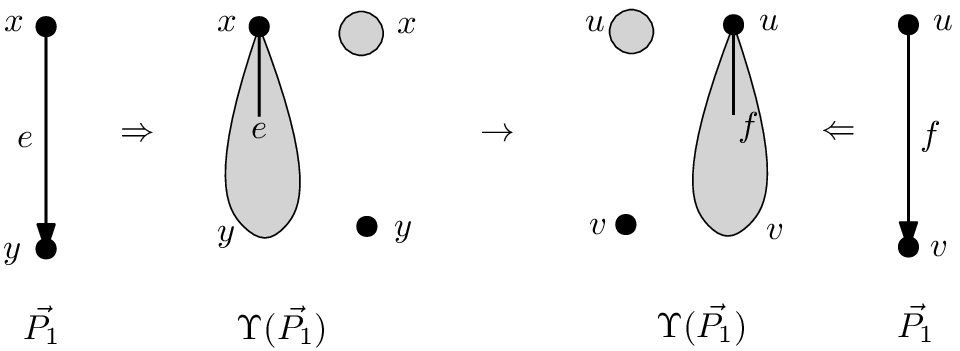}
\caption{The classical quiver exponential of a directed path to itself}
\label{UpsilonQuiverExp}
\end{figure}

The edges of ${\overrightarrow{P}_{1}}^{\overrightarrow{P}_{1}}$ in $\cat{Q}$ are homomorphisms in $\cat{R}$ from $\Upsilon\left(\overrightarrow{P}_{1}\right)$ to itself by Theorem \ref{qexponential}, noting that $\Upsilon\left(\overrightarrow{P_1}\right)$ is the disjoint union of the generators of $\cat{R}$.  This is made apparent in Figure \ref{QExpontAsUpsilon} through the pullback into the slice category $\cat{Q}/\overrightarrow{P}_{1}$, which is the bipartite incidence graph from $\cat{R}$. Observe that the four incidences of the $2$-cycle in $\Upsilon\left({\overrightarrow{P}_{1}}^{\overrightarrow{P}_{1}}\right)\cong_{\cat{R}}\Upsilon\left(\overrightarrow{P}_{1}\right)^{\Upsilon\left(\overrightarrow{P}_{1}\right)}$ manifests as an orientation of a $4$-cycle in the bipartite incidence graph.

\begin{figure}[H]
\centering
\includegraphics[scale=1]{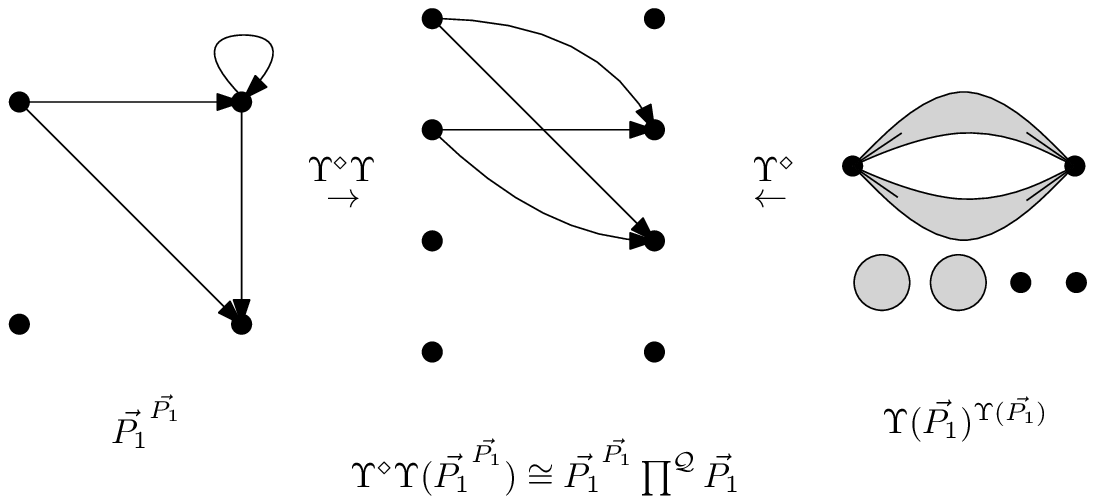}
\caption{Exponentials in $\cat{Q}$ and $\cat{R}$}
\label{QExpontAsUpsilon}
\end{figure}

\section{Conclusion}\label{library}

The table below summarizes the major properties of the categories studied in this paper.  Here, $I_{k}:=I^{\star}([k])$ is the incidence hypergraph having $k$ parallel incidences between a single vertex and a single edge, $\overrightarrow{B}_{k}:=\overrightarrow{E}^{\star}([k])$ is the bouquet of $k$ directed loops at a single vertex, and $E_{k}$ is the set-system hypergraph with a single $k$-edge.  Also, ``$v$'' (resp.\ ``$e$'') represents an isolated vertex (resp.\ loose edge), constructed appropriately for each category.  For comparison, $\Set$ itself has been included, which is generated by a singleton set \cite[Example I.4.5.17.a]{borceux} and has every object projective \cite[Example I.4.6.8.a]{borceux}.

\begin{table}[H]
\centering
\renewcommand{\arraystretch}{1.1}
\begin{tabular}{|c||c|c|c|c|c|}
\hline
Properties &  $\Set$ & $\cat{R}$ & $\cat{Q}$ & $\cat{M}$ & $\cat{H}$ \\ \hline\hline
Limits & Yes & Yes & Yes & Yes & Yes \\ \hline
Colimits & Yes & Yes & Yes & Yes & Yes \\ \hline
Subobject classifier & Yes & Yes & Yes & Yes & Yes \\ \hline
Cartesian closed & Yes & Yes & Yes & No & No \\ \hline
Projective cover & Yes & Yes & Yes & Yes & No \\ \hline
Minimal family of generators & $\{1\}$ & $v,e,I_1$ & $v,\overrightarrow{P}_1$ & $v,P_1$ & class \\ \hline
Terminal object & $\{1\}$ & $I_1$ & $\overrightarrow{B}_1$ & $E_1$ & $E_1{\coprod}^{\cat{H}}E_0$ \\ \hline
\end{tabular}
\caption{Comparison of $\Set$, $\cat{R}$, $\cat{Q}$, $\cat{M}$, and $\cat{H}$}
\end{table}

Observe that as one moves from the left of the table to the right, the categorical issues compound in the following ways:
\begin{itemize}
\item $\Set$ is generated by a single object, but $\cat{R}$ is generated by three;
\item the terminal object of $\cat{R}$ is also a generator, but not so in $\cat{Q}$;
\item $\cat{Q}$ is cartesian closed, but $\cat{M}$ is not;
\item $\cat{M}$ has projective covers for every object, a contiguous terminal object, and a pair of generators, but $\cat{H}$ lacks projective covers for non-projective objects, has a disconnected terminal object, and has a proper class of generators.
\end{itemize}
Notably, $\cat{R}$ reflects the properties of $\Set$ more closely than $\cat{Q}$ does. In particular, the exponential objects in $\cat{R}$ are determined by morphisms in $\cat{R}$, while the exponential objects in $\cat{Q}$ are also determined by morphisms in $\cat{R}$.

That said, Theorem \ref{natureofupsilon} gives an interesting interplay between the categories $\cat{Q}$ and $\cat{R}$.  One can quickly show that $\Upsilon$ is not full.  Specifically, $\cat{R}$ allows for the vertex and edge maps to differ, which is not allowed under the image of $\Upsilon$.  Consequently, $\Upsilon$ embeds the topos structure of $\cat{Q}$ into $\cat{R}$, but $\cat{R}$ allows for more morphisms than would be allowed in $\cat{Q}$.

On the other hand, $\cat{R}$ is equivalent to $\cat{Q}/\overrightarrow{P}_{1}$, a category of directed bipartite graphs, which demonstrates that the structure of an incidence hypergraph arises entirely from its bipartite incidence digraph.  However, $\cat{Q}$ can allow more homomorphisms than those in the slice category, noting that $\Upsilon^{\diamond}$ is also not full.  In particular, isolated vertices and loose edges in $\cat{R}$ both become isolated vertices in $\cat{Q}$ under $\Upsilon$, which may be mapped freely, rather than remaining in a fixed part of the bipartition of the vertices.

Categorical relationships such as this may be exploited to answer some combinatorial questions.  For example, the relationship above states that questions about bipartite graphs could be reformulated as questions about incidence hypergraphs, and perhaps solved using techniques of the latter \cite{Chen2017}, \cite{OHHar}, \cite{Reff1}, \cite{AH1}, \cite{OHMTT}, \cite{OH1}, \cite{Shi1}.

A question that can be immediately answered is from \cite{brown2008}.  In said paper's introduction, the authors make the analogy between sets and graphs, posing the question, ``What then should be a `member' of a graph?''  However, the answer for both $\cat{Q}$ and $\cat{R}$ comes from the theory of topoi, where a membership relation can be constructed in general via the exponential and subobject classifier \cite[Definition III.5.4.8]{borceux}.  Indeed, \cite[Example III.5.4.10.a]{borceux} demonstrates that the construction yields precisely the notion of membership to a set.

Understanding the combinatorial meaning of this membership relation, or other categorical concepts of the like, in $\cat{Q}$ and $\cat{R}$ might inform the analogy between sets and graphs, and address open graph-theoretic questions.  Consideration of such combinatorial meaning will be left to subsequent works.

\bibliographystyle{spmpsci}

\providecommand{\bysame}{\leavevmode\hbox to3em{\hrulefill}\thinspace}
\providecommand{\MR}{\relax\ifhmode\unskip\space\fi MR }

\providecommand{\MRhref}[2]{%
  \href{http://www.ams.org/mathscinet-getitem?mr=#1}{#2}
}
\providecommand{\href}[2]{#2}

\end{document}